\documentclass[12pt,reqno]{amsart}

\usepackage{geometry}
\usepackage{hhline}

\usepackage{mathtools}

\usepackage{mathdots}

\usepackage{color}

\usepackage{pdfsync}

\usepackage{enumitem}

\usepackage{wasysym}

\usepackage{amssymb}

\usepackage{mathrsfs}

\usepackage{bbm}

\DeclareMathAlphabet{\mathpzc}{OT1}{pzc}{m}{it}

\usepackage[all]{xy}

\usepackage{tikz}
\usetikzlibrary{arrows,matrix,decorations.pathmorphing,decorations.pathreplacing,positioning,shapes.geometric,shapes.misc,decorations.markings,decorations.fractals,calc,patterns}

\usepackage{tikz-cd}

\usepackage{graphicx}

\usepackage{float}

\usepackage[bottom]{footmisc}

\usepackage{moreenum}

\usepackage{makecell}

\entrymodifiers={+!!<0pt,\fontdimen22\textfont2>}

\usepackage{scalerel,stackengine}
\stackMath
\newcommand\newcheck[1]{%
\savestack{\tmpbox}{\stretchto{%
  \scaleto{%
    \scalerel*[\widthof{\ensuremath{#1}}]{\kern-.6pt\bigwedge\kern-.6pt}%
    {\rule[-\textheight/2]{1ex}{\textheight}}
  }{\textheight}%
}{0.5ex}}%
\stackon[1pt]{#1}{\scalebox{-1}{\tmpbox}}%
}
\stackMath
\newcommand\newhat[1]{%
\savestack{\tmpbox}{\stretchto{%
  \scaleto{%
    \scalerel*[\widthof{\ensuremath{#1}}]{\kern-.6pt\bigwedge\kern-.6pt}%
    {\rule[-\textheight/2]{1ex}{\textheight}}
  }{\textheight}%
}{0.5ex}}%
\stackon[1pt]{#1}{\scalebox{1}{\tmpbox}}%
}

\def\cA{\mathscr{A}}

\def\cE{\mathscr{E}}

\def\cX{\mathscr{X}}
\def\cY{\mathscr{Y}}

\def\BC{\mathbb{C}}

\def\BF{\mathbb{F}}
\def\BG{\mathbb{G}}
\def\BH{\mathbb{H}}

\def\BK{\mathbb{K}}

\def\BS{\mathbb{S}}

\def\BZ{\mathbb{Z}}

\def\Bk{\mathbbm{k}}

\def\fr{\mathfrak{r}}

\mathchardef\mhyphen="2D

\def\add{\operatorname{add}}

\def\adots{\mathinner{\mkern1mu\raise1.0pt\vbox{\kern7.0pt\hbox{.}}\mkern2mu\raise5.0pt\hbox{.}\mkern2mu\raise9.0pt\hbox{.}\mkern1mu}}

\def\Ch{\operatorname{Ch}}

\def\Coker{\operatorname{Coker}}

\def\Cot{\operatorname{Cot}}

\def\dddots{\mathinner{\mkern1mu\raise10.0pt\vbox{\kern7.0pt\hbox{.}}\mkern2mu\raise5.3pt\hbox{.}\mkern2mu\raise1.0pt\hbox{.}\mkern1mu}}
\def\dddotssmall{\mathinner{\mkern1mu\raise7.0pt\vbox{\kern7.0pt\hbox{.}}\mkern-1mu\raise4pt\hbox{.}\mkern-1mu\raise1.0pt\hbox{.}\mkern1mu}}

\def\Ext{\operatorname{Ext}}

\def\Flat{\operatorname{Flat}}

\def\GInj{\operatorname{GInj}}

\def\H{\operatorname{H}}

\def\Hom{\operatorname{Hom}}
\def\id{\operatorname{id}}

\def\Inj{\operatorname{Inj}}

\def\K0{\operatorname{K}_0}

\def\Ker{\operatorname{Ker}}

\def\Mod{\operatorname{Mod}}

\def\opp{\operatorname{op}}

\def\prj{\operatorname{prj}}
\def\Prj{\operatorname{Prj}}

\def\PSL2{\operatorname{PSL}_2}

\def\SL2{\operatorname{SL}_2}

\newcommand\Tensor[1]{\,{\underset{#1}{\otimes}}\,}

\def\SerreFunctor{\BS}

\newcommand\JFunctorHomAngles[1]{{}_{#1}\Delta}
\newcommand\JFunctorHomBrackets[1]{{}_{#1}\widetilde{\Delta}}
\newcommand\LFunctorHomAngles[1]{\nabla_{\!#1}}
\newcommand\LFunctorHomBrackets[1]{\widetilde{\nabla}_{\!#1}}
\newcommand\MFunctorHomAngles[1]{\Delta_{#1}}
\newcommand\MFunctorHomBrackets[1]{\widetilde{\Delta}_{#1}}

\def\Qdiff{Q^{\operatorname{diff}}}
\def\Qcpx{Q^{\operatorname{cpx}}}
\def\QNcpx{Q^{N\!\operatorname{-cpx}}}
\def\Q3cpx{Q^{3\!\operatorname{-cpx}}}

\numberwithin{equation}{section}

\newtheorem{Lemma}{Lemma}[section]
\newtheorem{Theorem}[Lemma]{Theorem}
\newtheorem{Proposition}[Lemma]{Proposition}

\theoremstyle{definition}
\newtheorem{Definition}[Lemma]{Definition}
\newtheorem{Setup}[Lemma]{Setup}

\newtheorem{Remark}[Lemma]{Remark}

\newtheorem{Example}[Lemma]{Example}

\theoremstyle{theorem}
\newtheorem{ThmIntro}{Theorem}

\theoremstyle{definition}

\newtheorem*{bfhpg*}{}

\usepackage{amsthm}
\usepackage{thmtools}
\declaretheoremstyle[
notefont=\bfseries, notebraces={}{},
bodyfont=\normalfont,
headformat=\NUMBER~\NOTE,
headpunct={}
]{foobar}

  {\begin{list}{}{%
    \settowidth{\labelwidth}{\textbf{#1:}}%
    \setlength{\leftmargin}{\labelwidth}\addtolength{\leftmargin}{\labelsep}}}%
  {\end{list}}

\makeatletter
\@namedef{subjclassname@2020}{%
  \textup{2020} Mathematics Subject Classification}
\makeatother

\begin{document}

\setlength{\parindent}{0pt}
\setlength{\parskip}{7pt}

\title[Acyclic objects in the $Q$-shaped derived categories]{Acyclic and totally acyclic objects in the $Q$-shaped derived category}

\author{Henrik Holm}

\address{Department of Mathematical Sciences, Universitetsparken 5, University of Copenhagen, 2100 Copenhagen {\O}, Denmark} 
\email{holm@math.ku.dk}

\urladdr{http://www.math.ku.dk/\~{}holm/}

\author{Peter J\o rgensen}

\address{Department of Mathematics, Aarhus University, Ny Munkegade 118, 8000 Aarhus C, Denmark}
\email{peter.jorgensen@math.au.dk}

\urladdr{https://sites.google.com/view/peterjorgensen}


\keywords{Differential module, Gorenstein ring, $N$-complex, quiver with relations}

\subjclass[2020]{13D09, 13H10, 18G80, 18N40}

\begin{abstract} 

This paper concerns homological algebra based on $Q$-shaped diagrams, where $Q$ belongs to a certain class of small categories.  We will show that the Gorenstein property of noetherian commutative rings is characterised by the condition that acyclicity coincides with total acyclicity for $Q$-shaped diagrams of suitable classes of modules.  Many special cases occur as corollaries, for instance $N$-complexes.

\medskip
\noindent
This generalises a classic result by Iyengar and Krause to the programme of $Q$-shaped derived categories, which builds on an insight of Iyama and Minamoto.

\medskip
\noindent
We prove our result using an adjoint triple of functors relating $Q$-shaped diagrams to chain complexes.  One of the functors was introduced by Jasso, and the whole triple generalises the compression, cocompression, and expansion functors for differential modules introduced by Avramov, Buchweitz, and Iyengar and by Nkansah.  

\medskip
\noindent
We consider the adjoint triple to be the main contribution of this paper. 

\end{abstract}

\maketitle

\setcounter{section}{-1}
\section{Introduction}
\label{sec:introduction}

Recall that a chain complex over a ring $A$ is called acyclic if its homology is zero.  There is another notion, total acyclicity, which is in general stronger.  It is a classic result, originally due to Krause and Iyengar, that the Gorenstein property of noetherian commutative rings is characterised by the condition that acyclicity coincides with total acyclicity for chain complexes over suitable classes of modules; see \cite[cor.\ 5.5]{Iyengar-Krause-acyclicity} and \cite[thm.\ 19.5.25]{CFH-book}.  

We will generalise this to $Q$-shaped diagrams.  

As an avatar, consider the special $Q$-shaped diagrams known as $N$-complexes.  They were introduced by Kapranov in \cite{Kapranov} and have the same shape as chain complexes, but the relation $\partial^2 = 0$ has been replaced by $\partial^N = 0$.  The notion of $N$-acyclic (that is, $N$-exact) $N$-complexes was defined in \cite[def.\ 1.1]{Kapranov}, and conditions (ii), (iii), and (iv) of the following theorem require $N$-acyclicity to coincide with total $N$-acyclicity for $N$-complexes over projective, injective, and flat modules.

\begin{ThmIntro}
\label{thm:A}
Let $A$ be a noetherian commutative ring.  The following conditions are equivalent:
\begin{enumerate}
\setlength\itemsep{4pt}

  \item  $A$ is a Gorenstein ring.
  
  \item  If $P$ is an $N$-acyclic $N$-complex of projective $A$-modules and $L$ is a projective $A$-module, then $\Hom_A( P,L )$ is $N$-acyclic.

  \item  If $I$ is an $N$-acyclic $N$-complex of injective $A$-modules and $J$ is an injective $A$-module, then $\Hom_A( J,I )$ is $N$-acyclic.

  \item  If $F$ is an $N$-acyclic $N$-complex of flat $A$-modules and $J$ is an injective $A$-module, then $J \otimes_A F$ is $N$-acyclic.
\hfill $\Box$

\end{enumerate}
\end{ThmIntro}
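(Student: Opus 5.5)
The plan is to reduce everything to the classical Iyengar--Krause theorem for ordinary chain complexes, using an adjoint triple relating $N$-complexes to chain complexes. Write $\Bk$ for the base ring $A$, and let $C$ be an $N$-complex with differential $\partial$. I will use the standard description of $N$-acyclicity: $C$ is $N$-acyclic if and only if, for each $1 \le r \le N-1$, the two-periodic "contracted" chain complex
\[
\cdots \to C^{n} \xrightarrow{\partial^{r}} C^{n+r} \xrightarrow{\partial^{N-r}} C^{n+N} \xrightarrow{\partial^{r}} \cdots
\]
is acyclic. This is the classical lemma of Kapranov/Dubois-Violette; in the language of the paper, these are the values of the functor from $N$-complexes to chain complexes, one of the functors in the adjoint triple. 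Each such contraction $\Phi_r(C)$ is assembled termwise from components of $C$, so it preserves termwise projectivity, injectivity and flatness. It also commutes with the additive operations involved:
\[
\Phi_r\bigl(\Hom_A(C,L)\bigr) \cong \Hom_A\bigl(\Phi_{N-r}(C),L\bigr)
\]
up to reindexing, and $\Phi_r(J\otimes_A C) \cong J\otimes_A \Phi_r(C)$. Thus $N$-acyclicity of $\Hom_A(P,L)$, $\Hom_A(J,I)$ and $J\otimes_A F$ reduces to ordinary acyclicity of the corresponding operation applied to the contracted complexes.

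\textbf{(i)$\Rightarrow$(ii),(iii),(iv).} Suppose $A$ is Gorenstein and $P$ is an $N$-acyclic $N$-complex of projectives. Each $\Phi_r(P)$ is an acyclic complex of projectives. By Iyengar--Krause \cite[cor.\ 5.5]{Iyengar-Krause-acyclicity}, it is totally acyclic, so $\Hom_A(\Phi_r(P),L)$ is acyclic for every projective $L$. Hence every contraction of $\Hom_A(P,L)$ is acyclic, and $\Hom_A(P,L)$ is $N$-acyclic. The injective and flat cases follow identically, using the corresponding parts of \cite[thm.\ 19.5.25]{CFH-book}.

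\textbf{Converse (e.g.\ (ii)$\Rightarrow$(i)).} Here I use the other direction of the adjoint triple, an "expansion" functor $E$ from chain complexes to $N$-complexes. Given a chain complex $X$, I build an $N$-complex by inserting identity maps:
\[
X^n \xrightarrow{\ \id\ } X^n \xrightarrow{\ \id\ } \cdots \xrightarrow{\ \id\ } X^n \xrightarrow{\ \partial\ } X^{n+1} \xrightarrow{\ \id\ } \cdots,
\]
with $N-1$ copies of each $X^n$. Then $\partial^N = 0$ because every $N$ consecutive maps include two differentials $\partial$.

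The key check is that $X$ is acyclic if and only if $E(X)$ is $N$-acyclic, and that $E$ commutes with $\Hom_A(-,L)$, $\Hom_A(J,-)$ and $J\otimes_A -$ up to the evident dualisation. To verify the first claim, I compute the contractions of $E(X)$ directly. Each is, up to contractible summands of the form $\id\colon X^n \to X^n$, a shift of $X$. Contractible summands do not affect acyclicity, so the equivalence follows.

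Granting this, let $X$ be an acyclic complex of projectives. Then $E(X)$ is an $N$-acyclic $N$-complex of projectives. By (ii), $\Hom_A(E(X),L) \cong E'(\Hom_A(X,L))$ is $N$-acyclic, where $E'$ is the dual expansion, so $\Hom_A(X,L)$ is acyclic. Thus every acyclic complex of projectives is totally acyclic, and Iyengar--Krause gives that $A$ is Gorenstein. The injective and flat cases are the same.

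\textbf{Main obstacle.} The crux is the bookkeeping in the previous paragraph: showing that the contractions of $E(X)$ split as shifts of $X$ plus contractible pieces, for every $r$ and both parities. I would first carry this out concretely for $N=3$ and then index the general case.
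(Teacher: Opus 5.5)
\textbf{There is a concrete error in the converse.} Your forward direction is fine. For $N$-complexes the paper's $\LFunctorHomAngles{q}$ are exactly such contractions, with steps $1$ and $N-1$, and your identity $\Phi_r\Hom_A(C,L)\cong\Hom_A(\Phi_{N-r}C,L)$ is the explicit form of Lemma~\ref{lem:17A_29_54_57_38A_40A}(iii).

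The converse breaks at its first step: for $N\geqslant 3$ your $E(X)$ is not an $N$-complex. With $N-1$ copies of every $X^n$, consecutive differentials sit $N-1$ maps apart. So a string of $N$ consecutive maps contains two differentials only when it begins with one. Already for $N=3$, the composite $X^n\xrightarrow{\id}X^n\xrightarrow{\partial}X^{n+1}\xrightarrow{\id}X^{n+1}$ equals $\partial$, which is nonzero in general.

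In general, every string of $N$ consecutive maps contains two differentials if and only if any two adjacent blocks have total length at most $N$. For the criterion ``$X$ acyclic iff $E(X)$ is $N$-acyclic'' you need total length exactly $N$. For instance, with all blocks of length $1$, $X$ is a $3$-complex which is $3$-acyclic only if $X=0$.

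\textbf{The repair.} Alternate the block lengths, e.g.\ one copy of $X^n$ for $n$ even and $N-1$ copies for $n$ odd; any alternation $a,\,N-a$ works. Up to indexing, the alternations $1,N-1$ and $N-1,1$ are what $\JFunctorHomAngles{q}$ and $\MFunctorHomAngles{q}$ look like for $N$-complexes; cf.\ Section~\ref{subsec:3-complexes}, where the multiplicities alternate between $2$ and $1$.

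Your bookkeeping claim must then be adjusted as well. Block boundaries now lie in two residue classes modulo $N$, so an $r$-step followed by an $(N-r)$-step crosses exactly two boundaries. Hence each contraction is one of two kinds:
\begin{itemize}
\item a shift of $X$, when each step crosses one boundary;
\item a sum of pieces $X^m\xrightarrow{\id}X^m$ joined by zero maps, which is contractible, when one step crosses both boundaries.
\end{itemize}
So it is not true that every contraction contains a copy of $X$. What you actually need is that some contraction is a full shift of $X$, e.g.\ $r=1$ started at the last copy of a block.

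With this $E$, termwise projectivity, injectivity and flatness are preserved. $E$ visibly commutes with $J\otimes_A-$ and $\Hom_A(J,-)$, and $\Hom_A(E(X),L)$ is again an alternating expansion of $\Hom_A(X,L)$. The rest of your argument then goes through.

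\textbf{Comparison with the paper.} The corrected proof is an elementary, formula-based version of the paper's route. The paper instead obtains $\JFunctorHomAngles{q}$ and $\MFunctorHomAngles{q}$ from the Special Adjoint Functor Theorem. It then proves the needed properties abstractly, using that $P\langle q\rangle$ is good: Lemmas~\ref{lem:59_68}, \ref{lem:15A_lower}, \ref{lem:20_49} and \ref{lem:18A_36_55}. That abstraction is what lets it treat general $Q$, where no explicit formulas are available.
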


If $N=2$ then Theorem \ref{thm:A} is precisely the classic result mentioned above.

Theorem \ref{thm:A} is a (very) special case of Theorem \ref{thm:B} below, which deals with general $Q$-shaped diagrams and belongs to the programme of $Q$-shaped derived categories.  That programme was established in \cite{HJ-JLMS}, \cite{HJ-TAMS} based on an insight of Iyama and Minamoto \cite{Iyama-Minamoto-1}, \cite{Iyama-Minamoto-2}; see \cite{HJ-Abel} for a survey and \cite{Di-Li-Liang-Ma_Flat-model-structures-and-Gorenstein-objects-in-functor-categories}, \cite{Di-Li-Liang-Ma_Flat-models-for-Q-shaped -derived -categories-via-PGF-objects}, \cite{HJ-Minimal}, \cite{Jasso-Q-shaped}, and \cite{Slaftsos-Vitoria} for other recent contributions.

Let $\Bk$ be a noetherian hereditary commutative ring and let $Q$ be a small $\Bk$-preadditive category satisfying conditions (Hom finiteness), (Local boundedness), (Serre functor), (Strong retraction), and (Nilpotence), which will be stated in Section \ref{subsec:notation}.  If $A$ is a $\Bk$-algebra, then a $Q$-shaped diagram of left $A$-modules is a $\Bk$-linear functor $Q \xrightarrow{} {}_{ A }\!\Mod$ to the category of left $A$-modules.  There is a notion of acyclic (also known as exact) $Q$-shaped diagram, see Equation \eqref{equ:exact_objects}, and conditions (ii), (iii), and (iv) of the following theorem require acyclicity to coincide with total acyclicity for $Q$-shaped diagrams over projective, injective, and flat modules.  

\begin{ThmIntro}
\label{thm:B}
Let $A$ be a noetherian commutative $\Bk$-algebra.  Assume that there exists $q \in Q_0$ such that $\SerreFunctor q \neq q$, where $Q_0$ is the set of objects of $Q$ and $\SerreFunctor$ is the Serre functor of $Q$.  Then the following conditions are equivalent:
\begin{enumerate}
\setlength\itemsep{4pt}

  \item  $A$ is a Gorenstein ring.
  
  \item  If $P$ is an acyclic $Q$-shaped diagram of projective $A$-modules and $L$ is a projective $A$-module, then $\Hom_A( P,L )$ is acyclic.

  \item  If $I$ is an acyclic $Q$-shaped diagram of injective $A$-modules and $J$ is an injective $A$-module, then $\Hom_A( J,I )$ is acyclic.

  \item  If $F$ is an acyclic $Q$-shaped diagram of flat $A$-modules and $J$ is an injective $A$-module, then $J \otimes_A F$ is acyclic.
\hfill $\Box$

\end{enumerate}
\end{ThmIntro}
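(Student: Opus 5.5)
The plan is to reduce Theorem \ref{thm:B} to the classical case $Q = $ chain complexes, i.e.\ to \cite[cor.\ 5.5]{Iyengar-Krause-acyclicity} and \cite[thm.\ 19.5.25]{CFH-book}. The abstract announces the tool: an adjoint triple of functors between $Q$-shaped diagrams of $A$-modules and chain complexes of $A$-modules, generalising compression, cocompression and expansion for differential modules. I will use three properties of this triple. First, each functor preserves the relevant module classes, so diagrams of projective (injective, flat) modules go to complexes of projective (injective, flat) modules and conversely. Second, each functor commutes with the operations $\Hom_A(-,L)$, $\Hom_A(J,-)$ and $J\otimes_A -$, up to switching to the adjoint on the other side where $\Hom_A(-,L)$ is contravariant. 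Third, each functor preserves and reflects acyclicity in a suitable sense. Since the functors are built from the $\Bk$-linear structure of $Q$ by finite sums and products of evaluations (using (Hom finiteness) and (Local boundedness)), the first two properties should be formal.

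For (i)$\Rightarrow$(ii),(iii),(iv), I will use the characterisation of acyclic $Q$-shaped diagrams from Equation \eqref{equ:exact_objects}. This expresses acyclicity through vanishing of the homologies given by stalk functors, that is, exactness at each $q$ of the complexes built from the radical/Serre structure of $Q$. The functor from diagrams to complexes, the Jasso-type functor, should send a $Q$-shaped diagram $X$ to a complex whose acyclicity is equivalent to acyclicity of $X$. Given $P$ acyclic of projectives, its image is then an acyclic complex of projectives. Since $A$ is Gorenstein, the classical result makes $\Hom_A(\text{image},L)$ acyclic. Commuting $\Hom_A(-,L)$ past the functor (passing to the adjoint) transfers this back to acyclicity of $\Hom_A(P,L)$. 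Cases (iii) and (iv) are handled identically.

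For the converses (ii)$\Rightarrow$(i), and likewise for (iii) and (iv), I go the other way. Take an acyclic complex $C$ of projectives and apply the expansion-type functor to obtain a $Q$-shaped diagram of projectives. Its acyclicity is guaranteed because the functor sends acyclic complexes to acyclic diagrams. Applying (ii) and then pulling back shows that $\Hom_A(C,L)$ is acyclic, so the classical theorem gives that $A$ is Gorenstein.

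The hypothesis that $\SerreFunctor q \neq q$ for some $q$ enters exactly here, and I expect this to be the main obstacle. If $\SerreFunctor q = q$ for every $q$, the functor produces something like self-injective behaviour, and every acyclic diagram is already totally acyclic. So the step requiring care is showing that acyclicity of the expanded diagram reflects acyclicity of $\Hom_A(C,L)$. I would first pick $q$ with $\SerreFunctor q \neq q$ and build the functor so that the complex is placed along the nonzero morphism $q \to \SerreFunctor q$ coming from the Serre duality pairing. (Strong retraction) and (Nilpotence) should then make the homology at $q$ of the diagram equal to the ordinary homology of $C$ up to shift.
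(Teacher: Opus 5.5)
Your overall strategy is the paper's: transport the classical theorem along an adjoint triple. But the proposal leaves out the idea that makes the converse work, and one of its claims is false as stated.

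No single functor from diagrams to complexes detects acyclicity. The paper takes $\LFunctorHomAngles{q} = \Hom_Q( P\langle q \rangle,- )$, where $P\langle q \rangle$ is a complete projective resolution of the pseudosimple $S\langle q \rangle$ by objects of ${}_{ Q }\!\prj$. Lemma \ref{lem:5v} then gives $X \in {}_{ Q }\cE$ if and only if $\LFunctorHomAngles{q}X$ is acyclic for \emph{every} $q \in Q_0$. One $q$ is not enough: for $3$-complexes, $\LFunctorHomAngles{\textsf{0}}X$ never involves $X_{\textsf{1}}$, so $\LFunctorHomAngles{\textsf{0}}S\langle \textsf{1} \rangle = 0$ although $S\langle \textsf{1} \rangle$ is not acyclic.

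So the forward implications must be run over all $q$. Also, in (ii) the way back is not an adjunction. It is the duality $\Hom_A( \LFunctorHomAngles{q}P,L ) \cong \Sigma^{ -1 }\LFunctorHomBrackets{q}\big( \Hom_A( P,L ) \circ \SerreFunctor^{ -1 } \big)$ with the $Q^{ \opp }$-version built from $P\{ q \} = \Sigma^{ -1 }\Hom_{ \Bk }( P\langle q \rangle,\Bk )$.

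Moreover, preservation of projective, injective and flat values is formal only for $\LFunctorHomAngles{q}$. The adjoints come from the Special Adjoint Functor Theorem with no formula. To show that $\JFunctorHomAngles{q}$ and $\MFunctorHomAngles{q}$ produce diagrams with values in $\cX$, respectively $\cY$, the paper needs an $\Ext$ argument with cotorsion pairs $( \cX,\cY )$ (Lemma \ref{lem:59_68}). That argument uses exactness of the adjoints and contractibility of $\LFunctorHomAngles{q}G_pB$.

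The real gap is the reflection step, which you flag as the main obstacle but only hope for. The functors needed there are the compression $\JFunctorHomAngles{q}$ for (ii) and (iv) and the cocompression $\MFunctorHomAngles{q}$ for (iii); expansion goes from diagrams to complexes.

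The missing idea is a \emph{good} complete resolution (Definition \ref{def:P_angles}). This means $P\langle q \rangle_0 \cong Q( q,- )$, $P\langle q \rangle_{ -1 } \cong Q( \SerreFunctor^{ -1 }q,- )$, and $\Hom_Q( S\langle p \rangle,\partial_0 ) = 0 = \Hom_Q( \partial_0,S\langle p \rangle )$ for all $p$. Your instinct about a Serre-duality morphism in degree $0$ points the right way, but what matters is this vanishing. It is exactly where $\SerreFunctor q \neq q$ is used (Lemma \ref{lem:3}(iii)), and (Strong retraction) then also gives $\Hom_Q( \partial_1,S\langle p \rangle ) = 0$.

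Consequently $\LFunctorHomAngles{q}S_qJ$ contains $J$ in degree $0$ as a direct summand (Lemma \ref{lem:9A_13A}). Two further ingredients finish the argument:
\begin{itemize}
\item the isomorphism $\Ext^1_{ \Ch( {}_{ A }\!\Mod ) }( \Sigma^{ \ell }C,\LFunctorHomAngles{q}S_qJ ) \cong \Ext^1_{ Q,A }( \JFunctorHomAngles{q}\Sigma^{ \ell }C,S_qJ )$, which needs exactness of the adjoints;
\item the characterisation of ${}_{ Q,A }\cE$ by vanishing of $\Ext^1_{ Q,A }( -,S_pJ )$ (Lemma \ref{lem:43}).
\end{itemize}
Together they show that $\JFunctorHomAngles{q}\Sigma^{ \ell }C \in {}_{ Q,A }\cE$ for all $\ell$ forces $\Ext^1_{ \Ch( {}_{ A }\!\Mod ) }( C,\Sigma^{ -\ell }J ) = 0$ for all $\ell$ and all injective $J$, that is, $C$ is acyclic. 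Even the easy half, that $\JFunctorHomAngles{q}$ sends acyclic complexes into ${}_{ Q,A }\cE$, goes through this adjunction plus semiinjectivity of $\LFunctorHomAngles{q}S_pJ$.

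This needs all shifts $\Sigma^{ \ell }C$. For (ii) it must also be run on the $Q^{ \opp }$ side, because $\Hom_A( -,L )$ turns compression into cocompression: $\MFunctorHomBrackets{q}\Hom_A( C,L ) \cong \Hom_A\big( ( \JFunctorHomAngles{q}\Sigma C ) \circ \SerreFunctor^{ -1 },L \big)$. So you also need the dual resolution $P\{ q \}$ to be good (Lemma \ref{lem:dualising_P_angles}). A statement of the form ``homology at $q$ equals $H(C)$ up to shift'' is not available in this generality and would require the same analysis.

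Finally, your heuristic for the hypothesis is wrong. For differential modules $\SerreFunctor = \id$, yet over a non-Gorenstein $A$ compressing an acyclic, not totally acyclic complex of projectives gives an acyclic differential module $P$ of projectives with $\Hom_A( P,L )$ not acyclic. The condition $\SerreFunctor q \neq q$ is only a convenient sufficient condition for goodness.
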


Theorem \ref{thm:B} applies to many examples, among them the following:
\begin{itemize}
\setlength\itemsep{4pt}

  \item  $Q$ can be $\QNcpx$, the $\Bk$-linear category given by the quiver in Figure \ref{fig:linear_quiver} with $N \geqslant 2$ and the relations that $N$ consecutive arrows compose to zero.  Then the $Q$-shaped diagrams are the $N$-complexes, and Theorem \ref{thm:B} specialises to Theorem \ref{thm:A}.

  \item  $Q$ can be the $\Bk$-linear category given by the quiver in Figure \ref{fig:cyclic_quiver} with $m \geqslant 2$ and the relations that two consecutive arrows compose to zero.  Then the $Q$-shaped diagrams are the $m$-periodic chain complexes.
  
  \item  $Q$ can be the $\Bk$-linear category given by the repetitive quiver $\BZ A_n$ with $n \geqslant 2$ and mesh relations.  This is a less classic situation.
  
  \item  If $\Bk$ is a field, $\Lambda$ a finite dimensional $\Bk$-algebra which is not weakly symmetric in the sense of \cite[p.\ 378]{Skowronski-Yamagata_Frobenius-algebras-I}, then $Q$ can be the $\Bk$-linear category given by the Gabriel quiver of $\Lambda$.  Then the $Q$-shaped diagrams are the left $\Lambda^{ \opp } \Tensor{\Bk} A$-modules.
  
\smallskip
\noindent  
The condition that $\Lambda$ is not weakly symmetric is imposed because it implies that there exists $q \in Q_0$ such that $\SerreFunctor q \neq q$.

\end{itemize}
See also \cite[2.5]{HJ-Abel}.

Theorem \ref{thm:B} will be proved using the adjoint triple of functors shown in Figure \ref{fig:JLM}.  They relate $Q$-shaped diagrams to chain complexes, and we view them as the main contribution of this paper.  We will only establish the properties necessary for the proof of Theorem \ref{thm:B}, in which the functors are used to translate the classic result from chain complexes to $Q$-shaped diagrams.

However, more could be said, and we consider it likely that the functors also have other applications.

The functor $\LFunctorHomAngles{q}$ in Figure \ref{fig:JLM} is defined using a complete projective resolution $P\langle q \rangle$ of a certain Gorenstein projective object $S\langle q \rangle$.
The definition is due to Jasso, see \cite[sec.\ 2.4.3]{Jasso-Q-shaped}.  The left and right adjoints $\JFunctorHomAngles{q}$ and $\MFunctorHomAngles{q}$ have not appeared previously.  However, one special case of Figure \ref{fig:JLM} does play a crucial role in the existing literature on differential modules, where the functors occur as the compression, cocompression, and expansion functors introduced by Avramov, Buchweitz, and Iyengar and by Nkansah; see \cite[sec.\ 1]{Avramov-Buchweitz-Iyengar} and \cite[sec.\ 2]{Nkansah-differential-modules}.  This will be explained in Section \ref{subsec:differential_modules}.

The paper is organised as follows: Section \ref{sec:preliminary} contains preliminary lemmas belonging to the programme of $Q$-shaped derived categories, which is the ambience for everything we do.  Section \ref{sec:complete} constructs a complete projective resolution which can be used to define $\LFunctorHomAngles{q}$.  Sections \ref{sec:L} and \ref{sec:JM} define $\LFunctorHomAngles{q}$, $\JFunctorHomAngles{q}$, and $\MFunctorHomAngles{q}$ and establish some of their properties.  Section \ref{sec:proof_of_Thm_B} proves Theorems \ref{thm:A} and \ref{thm:B}.

\begin{figure}
\begin{tikzpicture}[scale=1.5]
  \node at (-3,0){$\cdots$};
  \draw[->] (-2.75,0) to (-2.2,0);
  \node at (-2,0){\textsf{2}};
  \draw[->] (-1.8,0) to (-1.2,0);  
  \node at (-1,0){\textsf{1}};
  \draw[->] (-0.8,0) to (-0.2,0);    
  \node at (0,0){\textsf{0}};
  \draw[->] (0.2,0) to (0.75,0);    
  \node at (1,0){\textsf{-1}};
  \draw[->] (1.25,0) to (1.75,0);    
  \node at (2,0){\textsf{-2}};    
  \draw[->] (2.25,0) to (2.75,0);    
  \node at (3,0){$\cdots$};    
\end{tikzpicture}
\caption{Chain complexes and $N$-complexes are diagrams of this form.}
\label{fig:linear_quiver}
\end{figure}
\begin{figure}
\begin{tikzpicture}[scale=2]
  \node at (0:1.0){\textsf{0}};
  \draw[->] (10:1.0) arc (10:46:1.0);
  \node at (60:1.0){\textsf{m-1}};
  \draw[->] (74:1.0) arc (74:102:1.0);
  \node at (120:1.0){\textsf{m-2}};
  \draw[->] (133:1.0) arc (133:168:1.0);
  \node at (175:1.0){$\cdot$};
  \node at (180:1.0){$\cdot$};
  \node at (185:1.0){$\cdot$};  
  \draw[->] (192:1.0) arc (192:232:1.0);
  \node at (240:1.0){\textsf{2}};
  \draw[->] (248:1.0) arc (248:293:1.0);
  \node at (300:1.0){\textsf{1}};
  \draw[->] (308:1.0) arc (308:352:1.0);
\end{tikzpicture}
\caption{$m$-periodic chain complexes and $m$-periodic $N$-complexes are diagrams of this form.}
\label{fig:cyclic_quiver}
\end{figure}
\begin{figure}
\[
\xymatrix
{
  {}_{ Q,A }\!\Mod
    \ar[rr]^{ \LFunctorHomAngles{q} } &&
  \Ch( {}_{ A }\!\Mod ),
    \ar@/_2.0pc/[ll]_{ \JFunctorHomAngles{q} }
    \ar@/^2.0pc/[ll]^{ \MFunctorHomAngles{q} }
}
\]
\caption{The main contribution of this paper is an adjoint triple of functors relating $Q$-shaped diagrams of $A$-modules (left) to chain complexes of $A$-modules (right).}
\label{fig:JLM}
\end{figure}

We now explain the adjoint triple in Figure \ref{fig:JLM} in two examples for which we need the following definitions:
\begin{align*}
  {}_{ Q }\!\Mod & = \{ \Bk\mbox{-linear functors }Q \xrightarrow{} {}_{ \Bk }\!\Mod \}, \\
  {}_{ Q,A }\!\Mod & = \{ \Bk\mbox{-linear functors }Q \xrightarrow{} {}_{ A }\!\Mod \}.
\end{align*}

\subsection{Example: Differential modules}
\label{subsec:differential_modules}
In this subsection, which is due to \cite{Avramov-Buchweitz-Iyengar} and \cite{Nkansah-differential-modules}, we let $Q$ be the $\Bk$-preadditive category $\Qdiff$ given by
\[
  \xymatrix{
    q \ar@(ul,ur)^{ \delta }
           }
\]
with $\delta^2 = 0$.

Then ${}_{ Q }\!\Mod$ and ${}_{ Q,A }\!\Mod$ are the categories of differential modules over $\Bk$ and $A$, where a differential module is a pair $( X,\partial^X )$ with $X$ in ${}_{ \Bk }\!\Mod$ or ${}_{ A }\!\Mod$ and $\partial^X$ is an endomorphism with square zero.  We sometimes abbreviate $( X,\partial^X )$ to $X$.

There is a ``pseudosimple'' differential module $S\langle q \rangle = ( \Bk,0 )$.  If $\Bk$ is a field, then $S\langle q \rangle$ is genuinely simple in ${}_{ Q }\!\Mod$.

There is also a differential module $F = ( \Bk \oplus \Bk,\partial^F )$ where $d^F\begin{bsmallmatrix} 1 \\ 0 \end{bsmallmatrix} = \begin{bsmallmatrix} 0 \\ 1 \end{bsmallmatrix}$ and $d^F\begin{bsmallmatrix} 0 \\ 1 \end{bsmallmatrix} = \begin{bsmallmatrix} 0 \\ 0 \end{bsmallmatrix}$.  It is a projective object of ${}_{ Q }\!\Mod$.

The differential module $S\langle q \rangle$ is a Gorenstein projective object of ${}_{ Q }\!\Mod$, and it has a complete projective resolution 
$
  P\langle q \rangle =
  \cdots
  \xrightarrow{}
  F
  \xrightarrow{}
  F
  \xrightarrow{}
  F
  \xrightarrow{}
  \cdots.
$
We define the functor $\LFunctorHomAngles{q}$ in Figure \ref{fig:JLM} by
\[
  \LFunctorHomAngles{q}( - ) = \Hom_Q( P\langle q \rangle,- ).
\]  
It expands a differential module $X$ to the $1$-periodic chain complex
\[
  \LFunctorHomAngles{q}X 
  = 
  \cdots
  \xrightarrow{}
  X
  \xrightarrow{\partial^X}
  X
  \xrightarrow{\partial^X}
  X
  \xrightarrow{}
  \cdots.
\]
Hence $\LFunctorHomAngles{q}$ is the expansion functor introduced in \cite[1.4]{Avramov-Buchweitz-Iyengar}.  

The left adjoint $\JFunctorHomAngles{q}$ compresses a chain complex $C$ to the differential module
\[
  \JFunctorHomAngles{q}C
  =
  \Big(
  \coprod_i C_i,\coprod_i \partial^C_i
  \Big).
\]
Hence $\JFunctorHomAngles{q}$ is the compression functor introduced in \cite[1.3]{Avramov-Buchweitz-Iyengar}.  

The right adjoint $\MFunctorHomAngles{q}$  ``cocompresses'' a chain complex $C$ to the differential module
\[
  \MFunctorHomAngles{q}C
  =
  \Big(
  \prod_i C_i,\prod_i \partial^C_i
  \Big).
\]
Hence $\MFunctorHomAngles{q}$ is the cocompression functor introduced in \cite[sec.\ 1]{Nkansah-differential-modules}.

\subsection{Example: $3$-complexes}
\label{subsec:3-complexes}
In this subsection we let $Q$ be the $\Bk$-preadditive category $\Q3cpx$ given by the quiver in Figure \ref{fig:linear_quiver} with the relations that three consecutive arrows compose to zero.  A sans serif font is used to denote the vertices of the quiver, which are also the objects of $Q$.

Then ${}_{ Q }\!\Mod$ and ${}_{ Q,A }\!\Mod$ are the categories of $3$-complexes over $\Bk$ and $A$ in the sense of \cite[def.\ 0.1]{Kapranov}.

For each $q \in Q_0$, there is a ``pseudosimple'' $3$-complex $S\langle q \rangle = \cdots \xrightarrow{} 0 \xrightarrow{} \Bk \xrightarrow{} 0 \xrightarrow{} \cdots$ with $\Bk$ placed at $q$.  If $\Bk$ is a field, then $S\langle q \rangle$ is genuinely simple in ${}_{ Q }\!\Mod$.

For each $q \in Q_0$, there is a $3$-complex $F_q = \cdots \xrightarrow{} 0 \xrightarrow{} \Bk \xrightarrow{ \id } \Bk \xrightarrow{ \id } \Bk \xrightarrow{} 0 \xrightarrow{} \cdots$ with the first $\Bk$ placed at $q$.  It is a projective object of ${}_{ Q }\!\Mod$.

Each $S\langle q \rangle$ is a Gorenstein projective object of ${}_{ Q }\!\Mod$.  This is in particular true of $S \langle \textsf{0} \rangle$, which has a complete projective resolution 
$
  P\langle \textsf{0} \rangle =
  \cdots
  \xrightarrow{}
  F_{ \textsf{-3} }
  \xrightarrow{}
  F_{ \textsf{-1} }
  \xrightarrow{}
  F_{ \textsf{0} }
  \xrightarrow{}
  F_{ \textsf{2} }
  \xrightarrow{}
  F_{ \textsf{3} }
  \xrightarrow{}
  \cdots.
$
We define the functor $\LFunctorHomAngles{ \textsf{0} }$ in Figure \ref{fig:JLM} by
\[
  \LFunctorHomAngles{ \textsf{0} }( - ) = \Hom_Q( P\langle \textsf{0} \rangle,- ).
\]  
It sends a $3$-complex $X = \cdots \xrightarrow{}X_{ \textsf{2} } \xrightarrow{} X_{ \textsf{1} } \xrightarrow{} X_{ \textsf{0} } \xrightarrow{}  X_{ \textsf{-1} } \xrightarrow{} X_{ \textsf{-2} }  \xrightarrow{} \cdots$ to the chain complex
\[
  \LFunctorHomAngles{ \textsf{0} }X 
  = 
  \cdots
  \xrightarrow{}
  X_{ \textsf{3} }
  \xrightarrow{}
  X_{ \textsf{2} }
  \xrightarrow{}
  X_{ \textsf{0} }
  \xrightarrow{}
  X_{ \textsf{-1} }
  \xrightarrow{}
  X_{ \textsf{-3} }
  \xrightarrow{}
  \cdots
\]
with $X_{ \textsf{0} }$ in degree zero, where the differentials are given by the relevant compositions of differentials in $X$.

The left adjoint $\JFunctorHomAngles{ \textsf{0} }$ sends a chain complex $C$ to the $3$-complex
\[
  \JFunctorHomAngles{ \textsf{0} }C
  =
  \cdots
  \xrightarrow{}
  C_{ 2 }
  \xrightarrow{}
  C_{ 1 }
  \xrightarrow{}
  C_{ 1 }
  \xrightarrow{}
  C_{ 0 }
  \xrightarrow{}
  C_{ -1 }
  \xrightarrow{}
  C_{ -1 }
  \xrightarrow{}
  C_{ -2 }
  \xrightarrow{}
  \cdots
\]
with $C_0$ at $\textsf{0}$, and the right adjoint $\MFunctorHomAngles{ \textsf{0} }$ sends a chain complex $C$ to the $3$-complex
\[
  \MFunctorHomAngles{ \textsf{0} }C
  =
  \cdots
  \xrightarrow{}
  C_{ 2 }
  \xrightarrow{}
  C_{ 1 }
  \xrightarrow{}
  C_{ 0 }
  \xrightarrow{}
  C_{ 0 }
  \xrightarrow{}
  C_{ -1 }
  \xrightarrow{}
  C_{ -2 }
  \xrightarrow{}
  C_{ -2 }
  \xrightarrow{}
  \cdots
\]
with $C_0$ at $\textsf{0}$; the differentials are either identity maps or differentials in $C$.

If, instead of starting from $S\langle \textsf{0} \rangle$, we start from $S\langle q \rangle$, then we obtain the functors $\LFunctorHomAngles{q}$, $\JFunctorHomAngles{q}$, and $\MFunctorHomAngles{q}$ where everything has been translated suitably.

\subsection{Notation}
\label{subsec:notation}

Throughout the paper, the following objects are fixed:
\begin{itemize}
\setlength\itemsep{4pt}

  \item  $\Bk$ is a noetherian hereditary commutative ring.
  
  \item  $A$ is a $\Bk$-algebra.  Note that any ring can be used as $A$ if $\Bk$ is equal to the integers $\BZ$.

  \item  $Q$ is a small $\Bk$-preadditive category, that is, a category where each $\Hom$ space is a $\Bk$-module and composition distributes over addition.  We assume $Q$ has the following properties, where $Q_0$ is the set of objects of $Q$ and $Q( p,q )$ is the $\Hom$ space in $Q$ from $p$ to $q$.
\medskip
\begin{itemize}
\setlength\itemsep{4pt}

  \item  (Hom finiteness):  Each $Q( p,q )$ is a finitely generated projective $\Bk$-module.
  
  \item  (Local boundedness):  For each $q \in Q_0$ the following sets are finite:
\begin{align*}
  N_-( q ) & = \{\, p \in Q_0 \mid Q( p,q ) \neq 0 \,\}, \\
  N_+( q ) & = \{\, p \in Q_0 \mid Q( q,p ) \neq 0 \,\}.
\end{align*}

  \item  (Serre functor):  There is a $\Bk$-linear autoequivalence $\SerreFunctor: Q \xrightarrow{} Q$ such that there are isomorphisms
\[  
  Q( q,\SerreFunctor p ) \cong \Hom_{ \Bk }\!\big( Q( p,q ),\Bk \big),
\]  
natural with respect to $p, q \in Q_0$.  

  \item  (Strong retraction):  For each $q \in Q_0$ the $\Bk$-module $Q( q,q )$ is equipped with a decomposition
\[
  Q( q,q ) = \Bk \cdot \id_q \oplus\, \fr_q,
\]
and the decompositions satisfy the following:
\medskip
\begin{enumerate}
\setlength\itemsep{4pt}

  \item  $\fr_q \circ \fr_q \subseteq \fr_q$ for $q \in Q_0$,
  
  \item  $Q( p,q ) \circ Q( q,p ) \subseteq \fr_q$ for $p \neq q$ in $Q_0$.

\end{enumerate}
We define a subfunctor $\fr( -,- )$ of $Q( -,- )$, called the pseudoradical, by
\[
  \fr( p,q ) = 
  \left\{
    \begin{array}{cl}
      Q( p,q ) & \mbox{if $p \neq q$,} \\[1mm]
      \fr_q    & \mbox{if $p = q$.} \\
    \end{array}
  \right.
\]

  \item  (Nilpotence):  Abbreviating $\fr( -,- )$ to $\fr$, there is an integer $N \geqslant 0$ such that $\fr^N = 0$. 

\end{itemize}
\end{itemize}
The opposite category $Q^{ \opp }$ also has these properties.  Its pseudoradical is given by $\fr^{ \opp }( p,q ) = \fr( q,p )$, and its Serre functor is $( \SerreFunctor^{ -1 } )^{ \opp }$, that is, the inverse $\SerreFunctor^{ -1 }$ viewed as a functor $Q^{ \opp } \xrightarrow{} Q^{ \opp }$; see \cite[p.\ 33]{MacLane-book}.

For the convenience of the reader, we now collect a list of notation used in the rest of the paper.  It mainly matches our other papers on $Q$-shaped derived categories, see \cite{HJ-Abel}, \cite{HJ-JLMS}, \cite{HJ-TAMS}.  

Note that if $Q$ is $\Qcpx$, the $\Bk$-linear category given by the quiver in Figure \ref{fig:linear_quiver} with the relations that two consecutive arrows compose to zero, then a $Q$-shaped diagram is a chain complex, and the items in the list specialise to the relevant classic items.  In particular, acyclic objects are chain complexes which are acyclic in the classic sense of having zero homology. 
\begin{itemize}
\setlength\itemsep{4pt}

  \item  Opposite rings and categories are denoted by the superscript ``$\opp$''.

  \item  We freely view contravariant functors as covariant functors on opposite categories and vice versa.

  \item  An additive subcategory is a subcategory closed under isomorphisms, finite coproducts, and direct summands.

  \item  The category of $\Bk$-modules is ${}_{ \Bk }\!\Mod$, and its $\Hom$ functor is $\Hom_{ \Bk }$.

  \item  The category of left $A$-modules is ${}_{ A }\!\Mod$, and its $\Hom$ functor is $\Hom_A$.

  \item  The category of right $A$-modules is $\Mod_A$, and its $\Hom$ functor is $\Hom_{ A^{ \opp } }$.

  \item  The category of $A$-bimodules is ${}_{ A }\!\Mod_A$.
  
  \item  The following categories of $Q$-shaped diagrams will be used:
\begin{align*}
  {}_{ Q }\!\Mod & = \{ \mbox{$\Bk$-linear functors $Q \xrightarrow{} {}_{ \Bk }\!\Mod$} \} && \hspace{-3em}\mbox{with $\Hom$ functor $\Hom_Q$}. \\
  \Mod_Q & = \{ \mbox{$\Bk$-linear functors $Q^{ \opp } \xrightarrow{} {}_{ \Bk }\!\Mod$} \} && \hspace{-3em}\mbox{with $\Hom$ functor $\Hom_{ Q^{ \opp } }$}. \\
  {}_{ Q,A }\!\Mod & = \{ \mbox{$\Bk$-linear functors $Q \xrightarrow{} {}_{ A }\!\Mod$} \} && \hspace{-3em}\mbox{with $\Hom$ functor $\Hom_{ Q,A }$}. \\
  \Mod_{ Q,A } & = \{ \mbox{$\Bk$-linear functors $Q^{ \opp } \xrightarrow{} \Mod_A$} \} && \hspace{-3em}\mbox{with $\Hom$ functor $\Hom_{ Q^{ \opp },A^{ \opp } }$}.
\end{align*}

  \item  For each of the categories above, replacing ``$\Mod$'' by ``$\Prj$'' or ``$\Inj$'' denotes the full subcategory of projective or injective objects.
  
  \item  The category of finitely generated projective $\Bk$-modules is ${}_{ \Bk }\!\prj$. 

  \item  There are full subcategories of acyclic objects (called exact in our other papers):
\begin{equation}
\label{equ:exact_objects}
  \begin{array}{rcl}
    {}_{ Q }\cE & = & \{\, X \in {}_{ Q }\!\Mod \,|\, \mbox{$X$ has finite projective dimension in ${}_{ Q }\!\Mod$} \,\}, \\[2mm]
    {}_{ Q,A }\cE & = & \{\, Y \in {}_{ Q,A }\!\Mod \,|\, \mbox{$Y^{ \natural }$ has finite projective dimension in ${}_{ Q }\!\Mod$} \,\}, \\
  \end{array}
\end{equation}
where $Y^{ \natural }$ denotes the object of ${}_{ Q }\!\Mod$ obtained by forgetting the $A$-structure of $Y\!$.  Symmetrically, there are full subcategories $\cE_Q$ and $\cE_{ Q,A }$.  See \cite[def.\ 4.1]{HJ-JLMS}.

  \item  $\Ext$ functors are decorated with the same subscripts used for the corresponding $\Hom$ functors.
  
  \item  There is a tensor functor $- \Tensor{Q} - : \Mod_Q \times {}_{ Q }\!\Mod \xrightarrow{} {}_{ \Bk }\!\Mod$, see \cite[sec.\ 1]{Oberst-Roehrl}.  
  
  \item  $\Hom$, $\Ext$, and tensor functors are compatible with additional structures.  A specific instance is particularly important in this paper: Given $P \in {}_{ Q }\!\Mod$ and $X \in {}_{ Q,A }\!\Mod$, we have $\Hom_Q( P,X ) \in {}_{ A }\!\Mod$.
  
  \item  For each $q \in Q_0$ there is a ``pseudosimple'' functor $S\langle q \rangle = Q( q,- )/\fr( q,- )$ in ${}_{ Q }\!\Mod$.  It satisfies
\begin{equation}
\label{equ:S_formulae_0}
  S\langle q \rangle( p )
  =
  \left\{
    \begin{array}{cl}
      \Bk & \mbox{if $p = q$,} \\[1mm]
      0   & \mbox{if $p \neq q$;} \\
    \end{array}  
  \right.
\end{equation}
see \cite[def.\ 7.9 and lem.\ 7.10]{HJ-JLMS}.  Dually, there is $S\{ q \} = Q( -,q )/\fr( -,q )$ in $\Mod_Q$, and we have
\begin{equation}
\label{equ:S_formulae_1}
  S\langle q \rangle \cong \Hom_{ \Bk }( S\{ q \},\Bk )
  \;\;\mbox{and}\;\;
  S\{ q \} \cong \Hom_{ \Bk }( S\langle q \rangle,\Bk ).
\end{equation}
See \cite[prop.\ 7.15 and its proof]{HJ-JLMS}.

  \item  For each $q \in Q_0$ there is a functor 
\[
  S_q : {}_{ A }\!\Mod \xrightarrow{} {}_{ Q,A }\!\Mod
\]
given by
\begin{equation}
\label{equ:S_formulae_2}
  S_q( - )
  = S\langle q \rangle \Tensor{\Bk} -
  \cong \Hom_{ \Bk }( S\{ q \}, - ),
\end{equation}
which places an $A$-module $M$ at $q$ and puts zeroes elsewhere.  See \cite[prop.\ 7.15 and its proof]{HJ-JLMS}.

  \item  For each $q \in Q_0$ there is an adjoint triple
\begingroup
\begin{equation}
\label{equ:EFG}
\xymatrix
{
  {}_{ Q,A }\!\Mod
    \ar[rr]^{ E_q } &&
  {}_{ A }\!\Mod
    \ar@/_2.0pc/[ll]_{ F_q }
    \ar@/^2.0pc/[ll]^{ G_q }
}
\;\;\; \mbox{ given by } \;\;\;
\left\{
  \begin{array}{l}
    F_qM = Q( q,- ) \Tensor{\Bk} M, \\
    E_qX = X( q ), \\[2mm]
    G_qM = \Hom_{ \Bk }\!\big( Q( -,q ),M \big),
  \end{array}
\right.
\end{equation}
\endgroup
see \cite[cor.\ 3.9]{HJ-JLMS}.

  \item  If $\cA$ is an additive category, then $\Ch( \cA )$ is the category of chain complexes over $\cA\!$.  Its suspension functor is denoted $\Sigma$.

\end{itemize}

\section{Preliminary results}
\label{sec:preliminary}

This section contains preliminary lemmas belonging to the programme of $Q$-shaped derived categories, which is the ambience for everything we do.

\begin{Definition}
\label{def:2}
By the proof of \cite[thm.\ 7.29]{HJ-JLMS}, the full subcategory ${}_{ Q }\!\Prj$ of projective objects of ${}_{ Q }\!\Mod$ can be characterised as follows:

Let $V$ be a subset of $Q_0$ and let $K_q$ be in ${}_{ \Bk }\!\Mod$ for each $q \in V$.  Then
\begin{equation}
\label{equ:def:2:10}
  \coprod_{ q \in V } Q( q,- ) \Tensor{\Bk} K_q
\end{equation}
is an object of ${}_{ Q }\!\Mod$, and 
\begin{itemize}
\setlength\itemsep{4pt}

  \item  ${}_{ Q }\!\Prj$ is the full subcategory of objects \eqref{equ:def:2:10} where $V$ is an arbitrary subset of $Q_0$ and each $K_q$ is in ${}_{ \Bk }\!\Prj$.

\end{itemize}
We also make the following definition:
\begin{itemize}
\setlength\itemsep{4pt}

  \item  ${}_{ Q }\!\prj$ is the full subcategory of objects \eqref{equ:def:2:10} where $V$ is a finite subset of $Q_0$ and each $K_q$ is in ${}_{ \Bk }\!\prj$.
  
\end{itemize}
Dually,
\[
  \coprod_{ q \in V } Q( -,q ) \Tensor{\Bk} K_q
\]
is an object of $\Mod_Q$, and we use these as above to characterise the subcategory $\Prj_Q$ and define the subcategory $\prj_Q$ of $\Mod_Q$.

In particular, ${}_{ Q }\!\prj$, respectively $\prj_Q$, consists of objects which are projective in ${}_{ Q }\!\Mod$, respectively $\Mod_Q$.
\end{Definition}

\begin{Lemma}
\label{lem:pre_5i_5ii_30_32}
Let $P \in {}_{ Q }\!\prj$ be given.  
\begin{enumerate}
\setlength\itemsep{4pt}

  \item  The functor
\[
  \Hom_Q( P,? ) : {}_{ Q }\!\Mod \xrightarrow{} {}_{ \Bk }\!\Mod
\]
respects set indexed products and coproducts.

  \item  Let $\cA \subseteq {}_{ A }\!\Mod$ be an additive subcategory and let $X \in {}_{ Q,A }\!\Mod$ be given.  Then the following implication holds:
\[
  X( q ) \in \cA \mbox{ for each } q \in Q_0 \Rightarrow \Hom_Q( P,X ) \in \cA\!.
\]

\end{enumerate}
\end{Lemma}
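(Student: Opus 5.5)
Both parts follow from the explicit form of $P$ given in Definition \ref{def:2}, namely
\[
  P \cong \coprod_{ q \in V } Q( q,- ) \Tensor{\Bk} K_q
\]
with $V \subseteq Q_0$ finite and each $K_q \in {}_{ \Bk }\!\prj$. Since $V$ is finite, this coproduct is a finite biproduct. Hence $\Hom_Q( P,- ) \cong \bigoplus_{ q \in V } \Hom_Q\big( Q( q,- ) \Tensor{\Bk} K_q,- \big)$, and it is enough to treat a single summand $Q( q,- ) \Tensor{\Bk} K$ with $K \in {}_{ \Bk }\!\prj$.

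For such a summand, the adjunction $F_q \dashv E_q$ from \eqref{equ:EFG} (in the case $A = \Bk$) together with Yoneda gives a natural isomorphism
\[
  \Hom_Q\big( Q( q,- ) \Tensor{\Bk} K, X \big) \cong \Hom_{ \Bk }\big( K, X( q ) \big).
\]
For $X \in {}_{ Q,A }\!\Mod$ this is an isomorphism of $A$-modules, because the $A$-structure is carried along compatibly, as noted in Section \ref{subsec:notation}.

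For (i), evaluation at $q$ preserves products and coproducts, since these are computed objectwise in the functor category ${}_{ Q }\!\Mod$. The functor $\Hom_{ \Bk }( K,- )$ also preserves products, and it preserves coproducts because $K$ is finitely generated projective: $K$ is a direct summand of some $\Bk^n$, and $\Hom_{ \Bk }( \Bk^n,- ) = (-)^n$ commutes with coproducts. Finite direct sums of functors that preserve products and coproducts again preserve them, which proves (i).

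For (ii), write $K \oplus K' \cong \Bk^n$. Then $\Hom_{ \Bk }( K, X( q ) )$ is a direct summand of $\Hom_{ \Bk }( \Bk^n, X( q ) ) \cong X( q )^n$ as $A$-modules. Since $\cA$ is an additive subcategory, it is closed under isomorphisms, finite coproducts and direct summands. So $X( q ) \in \cA$ implies $X( q )^n \in \cA$, then $\Hom_{ \Bk }( K, X( q ) ) \in \cA$, and finally the finite direct sum $\Hom_Q( P,X ) \in \cA$.

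The only point requiring care is that the Yoneda/adjunction isomorphism is $A$-linear. This holds because the $A$-action on $\Hom_Q( P,X )$ is induced from the $A$-action on $X$, and the isomorphism is natural in $X$. Everything else in the argument is routine.
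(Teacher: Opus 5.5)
Your proposal is correct and follows essentially the same route as the paper: decompose $P$ as a finite coproduct of $Q(q,-)\Tensor{\Bk}K_q$, and use adjointness plus Yoneda to get $\Hom_Q(P,?) \cong \coprod_{q\in V}\Hom_{\Bk}\big(K_q,(?)(q)\big)$. Both parts then follow from $V$ being finite and each $K_q$ being in ${}_{\Bk}\!\prj$. Your write-up simply makes explicit the details the paper leaves to the reader, namely that $\Hom_{\Bk}(K,-)$ commutes with coproducts and that $\Hom_{\Bk}(K,X(q))$ is an $A$-linear direct summand of $X(q)^n$.
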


\begin{proof}
It is clear that $\Hom_Q( P,? )$ respects set indexed products.

Observe that $P$ has the form \eqref{equ:def:2:10} with $V$ finite and each $K_q$ in ${}_{ \Bk }\!\prj$.  This implies
\begin{align*}
  \Hom_Q( P,? )
  & = \Hom_Q\!\big( \coprod_{ q \in V } Q( q,- ) \Tensor{\Bk} K_q,? \big) && \\
  & \cong \coprod_{ q \in V } \Hom_Q\!\big( Q( q,- ) \Tensor{\Bk} K_q,? \big) && \mbox{$V$ finite} \\
  & \cong \coprod_{ q \in V } \Hom_{ \Bk }\!\Big( K_q,\Hom_Q\!\big( Q( q,- ),? \big) \Big) && \mbox{Adjointness} \\
  & \cong \coprod_{ q \in V } \Hom_{ \Bk }\!\big( K_q,(?)(q) \big). && \mbox{Yoneda's Lemma}
\end{align*}
Since $V$ is finite and each $K_q$ is in ${}_{ \Bk }\!\prj$, the computation implies that $\Hom_Q( P,? )$ respects set indexed coproducts.  It also implies part (ii) of the lemma.
\end{proof}

\begin{Lemma}
\label{lem:16A_28_35_56}
\begin{enumerate}
\setlength\itemsep{4pt}

  \item  There is an isomorphism in $\Mod_Q$,
\[
  \Hom_{ \Bk }\!\big( Q( q,- ) \Tensor{\Bk} K,\Bk  \big) \cong Q( -,\SerreFunctor q ) \Tensor{\Bk} \Hom_{ \Bk }( K,\Bk ),
\]
natural with respect to $q \in Q_0$ and $K \in {}_{ \Bk }\!\prj$.

  \item  There is an isomorphism in ${}_{ \Bk }\!\Mod$,
\[
  F\big( \Hom_Q( P,X ) \big) \cong \Hom_Q( P,F \circ X ),
\]
natural with respect to $P \in {}_{ Q }\!\prj$, $X \in {}_{ Q }\!\Mod$, and the $\Bk$-linear functor $F: {}_{ \Bk }\!\Mod \xrightarrow{} {}_{ \Bk }\!\Mod$.

  \item  There is an isomorphism in ${}_{ \Bk }\!\Mod$,
\[
  \Hom_{ Q }\!\big( X,\Hom_{ \Bk }( Y,M ) \big)
  \cong 
  \Hom_{ Q^{ \opp } }\!\big( Y,\Hom_{ \Bk }( X,M ) \big),
\]
natural with respect to $X \in {}_{ Q }\!\Mod$, $Y \in \Mod_{ Q }$, and $M \in {}_{ \Bk }\!\Mod$.

  \item  There is an isomorphism in ${}_{ \Bk }\!\Mod$,
\[
  \Hom_{ \Bk }( X,M ) \Tensor{Q} P
  \cong
  \Hom_{ \Bk }\!\big( \Hom_Q( P,X ),M \big),
\]
natural with respect to $X \in {}_{ Q }\!\Mod$, $M \in {}_{ \Bk }\!\Mod$, and $P \in {}_{ Q }\!\prj$.

  \item  There is an isomorphism in ${}_{ \Bk }\!\Mod$,
\[
  \Hom_{ Q^{ \opp } }\!\big( \Hom_{ \Bk }( P,\Bk ),Y \big) \cong ( Y \circ \SerreFunctor ) \Tensor{Q} P,
\]
natural with respect to $P \in {}_{ Q }\!\prj$ and $Y \in \Mod_Q$.

  \item  There is an isomorphism in ${}_{ \Bk }\!\Mod$,
\[
  \Hom_{ Q^{ \opp } }\!\big( \Hom_{ \Bk }( P,\Bk ),\Hom_{ \Bk }( X,M ) \circ \SerreFunctor^{ -1 } \big) \cong \Hom_{ \Bk }\!\big( \Hom_Q( P,X ),M \big),
\]
natural with respect to $P \in {}_{ Q }\!\prj$, $X \in {}_{ Q }\!\Mod$, and $M \in {}_{ \Bk }\!\Mod$.

\end{enumerate}
\end{Lemma}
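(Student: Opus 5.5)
The plan is to reduce every part to the case of a single "representable" object $P = Q( q,- ) \Tensor{\Bk} K$ with $K \in {}_{ \Bk }\!\prj$. All functors involved are additive in $P$ and commute with finite coproducts, and objects of ${}_{ Q }\!\prj$ are finite coproducts of such objects by Definition \ref{def:2}. In that case the computations are Yoneda's Lemma, the co-Yoneda isomorphism $Y \Tensor{Q} Q( q,- ) \cong Y( q )$, and the standard isomorphisms for finitely generated projective $\Bk$-modules:
\[
  \Hom_{ \Bk }( K,\Bk ) \Tensor{\Bk} N \cong \Hom_{ \Bk }( K,N ),
  \qquad
  \Hom_{ \Bk }( N,M ) \Tensor{\Bk} K \cong \Hom_{ \Bk }\!\big( \Hom_{ \Bk }( K,N ),M \big).
\]

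For (i), I will write
\[
  \Hom_{ \Bk }\!\big( Q( q,- ) \Tensor{\Bk} K,\Bk \big) \cong \Hom_{ \Bk }\!\big( Q( q,- ),\Bk \big) \Tensor{\Bk} \Hom_{ \Bk }( K,\Bk ),
\]
which is valid pointwise because each $Q( q,p )$ and $K$ is finitely generated projective. I will then apply the (Serre functor) isomorphism $\Hom_{ \Bk }( Q( q,p ),\Bk ) \cong Q( p,\SerreFunctor q )$, whose naturality in $p$ makes the result an isomorphism in $\Mod_Q$, and whose naturality in $q$ gives naturality in $q$.

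For (ii), in the representable case
\[
  \Hom_Q( P,X ) \cong \Hom_{ \Bk }\!\big( K,X( q ) \big),
\]
and a $\Bk$-linear functor $F$ commutes with $\Hom_{ \Bk }( K,- )$ when $K$ is a summand of $\Bk^n$, since $F$ preserves finite sums and retracts. For (iii), both sides identify with $\Hom_{ \Bk }( Y \Tensor{Q} X,M )$ by the tensor--Hom adjunction of \cite{Oberst-Roehrl}. Equivalently, both are the $\Bk$-module of families of maps $X( p ) \Tensor{\Bk} Y( p ) \xrightarrow{} M$ that are compatible with $Q$. For (iv), the co-Yoneda isomorphism gives
\[
  \Hom_{ \Bk }( X,M ) \Tensor{Q} P \cong \Hom_{ \Bk }\!\big( X( q ),M \big) \Tensor{\Bk} K,
\]
and the right-hand side is $\Hom_{ \Bk }\!\big( \Hom_{ \Bk }( K,X( q ) ),M \big)$ by the second displayed isomorphism above. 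For (v), I will use (i) to write $\Hom_{ \Bk }( P,\Bk ) \cong Q( -,\SerreFunctor q ) \Tensor{\Bk} K^{*}$, where $K^{*} = \Hom_{ \Bk }( K,\Bk )$. Yoneda then gives
\[
  \Hom_{ \Bk }\!\big( K^{*},Y( \SerreFunctor q ) \big) \cong Y( \SerreFunctor q ) \Tensor{\Bk} K,
\]
and co-Yoneda identifies this with $( Y \circ \SerreFunctor ) \Tensor{Q} P$. Part (vi) is then formal: apply (v) with $Y = \Hom_{ \Bk }( X,M ) \circ \SerreFunctor^{ -1 }$, so that $Y \circ \SerreFunctor = \Hom_{ \Bk }( X,M )$, and finish with (iv).

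The main obstacle is naturality in $P$ with respect to all morphisms in ${}_{ Q }\!\prj$, not just those that respect a chosen decomposition. I will handle this by checking that each isomorphism in the representable case is natural with respect to morphisms
\[
  Q( q,- ) \Tensor{\Bk} K \xrightarrow{} Q( q',- ) \Tensor{\Bk} K'.
\]
By Yoneda, these morphisms are exactly the elements of $Q( q',q ) \Tensor{\Bk} \Hom_{ \Bk }( K,K' )$. Naturality with respect to them holds because every isomorphism used is built from the Yoneda and Serre isomorphisms, which are natural in $q$, and from $\Bk$-linear operations, which commute with $F$ by $\Bk$-linearity. Both sides of each claimed isomorphism are additive functors of $P$. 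Such a natural isomorphism on the full subcategory of representables $Q( q,- ) \Tensor{\Bk} K$ therefore extends uniquely to one on its additive closure ${}_{ Q }\!\prj$, using matrices of morphisms between finite coproducts. Naturality in the remaining variables $X$, $Y$, $M$, $F$ is immediate from the constructions.
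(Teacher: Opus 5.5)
Your proposal is correct and follows essentially the same route as the paper. Each isomorphism is checked on representables via Yoneda, co-Yoneda ($Y \Tensor{Q} Q( q,- ) \cong Y( q )$) and Serre duality, then extended additively to ${}_{ Q }\!\prj$, with (vi) obtained formally from (v) and (iv). The differences are minor. The paper verifies (ii), (iv) and (v) only on $Q( q,- )$ and cites \cite[prop.\ 2.3(b)]{AusRepI} to reach summands of finite coproducts, whereas you build in the factor $K$ by hand using the standard isomorphisms for finitely generated projective $\Bk$-modules. In (iii) the paper writes the swap $\upsilon_q( y )( x ) = \xi_q( x )( y )$ directly instead of passing through $\Hom_{ \Bk }( Y \Tensor{Q} X,M )$.
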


\begin{proof}
(i): We can compute as follows:
\begin{align*}
  \Hom_{ \Bk }\!\big( Q( q,- ) \Tensor{\Bk} K,\Bk  \big)
  & \cong \Hom_{ \Bk }\!\big( Q( q,- ),\Hom_{ \Bk }( K,\Bk ) \big) && \mbox{Adjointness} \\
  & \cong \Hom_{ \Bk }\!\big( Q( q,- ),\Bk \Tensor{\Bk} \Hom_{ \Bk }( K,\Bk ) \big) && \\
  & \cong \Hom_{ \Bk }\!\big( Q( q,- ),\Bk \big) \Tensor{\Bk} \Hom_{ \Bk }( K,\Bk ) && \mbox{\cite[prop.\ 1.4.6(a)]{CFH-book}} \\
  & \cong Q( -,\SerreFunctor q ) \Tensor{\Bk} \Hom_{ \Bk }( K,\Bk ). && \mbox{Serre duality}
\end{align*}

(ii): We obtain the following isomorphisms from Yoneda's Lemma:
\begin{align*}
  F\Big( \Hom_Q\!\big( Q( q,- ),X \big) \Big) & \cong F\big( X( q ) \big), \\
  \Hom_Q\!\big( Q( q,- ),F \circ X \big) & \cong ( F \circ X )( q ),
\end{align*}
and the right hand sides are identical, so there is an isomorphism between the left hand sides:
\[
  F\Big( \Hom_Q\!\big( Q( q,- ),X \big) \Big) \cong \Hom_Q\!\big( Q( q,- ),F \circ X \big),
\]
natural with respect to $q \in Q_0$, $X \in {}_{ Q }\!\Mod$, and the $\Bk$-linear functor $F : {}_{ \Bk }\!\Mod \xrightarrow{} {}_{ \Bk }\!\Mod$.  This extends to the isomorphism in part (ii) of the lemma by \cite[prop.\ 2.3(b)]{AusRepI}.

(iii): Consider an element in the left hand side of the isomorphism in part (iii), that is, a natural transformation $X \xrightarrow{ \xi } \Hom_{ \Bk }( Y,M )$.  We  map it to an element in the right hand side of the isomorphism, that is, a natural transformation $Y \xrightarrow{ \upsilon } \Hom_{ \Bk }( X,M )$ as follows: For $q \in Q_0$, $x \in X( q )$, and $y \in Y( q )$, we set
\[
  \upsilon_q( y )( x ) = \xi_q( x )( y ).
\]
Straightforward computations show that this provides the isomorphism claimed in part (iii).

(iv): There are isomorphisms
\begin{align*}
  \Hom_{ \Bk }( X,M ) \Tensor{Q} Q( q,- ) & \cong \Hom_{ \Bk }( X,M )( q ) && \mbox{\cite[p.\ 93]{Oberst-Roehrl}} \\
  & = \Hom_{ \Bk }\!\big( X( q ),M \big) && \\
  & \cong \Hom_{ \Bk }\!\Big( \Hom_Q\!\big( Q( q,- ),X \big),M \Big), && \mbox{Yoneda's Lemma}
\end{align*}
natural with respect to $q \in Q_0$, $X \in {}_{ Q }\!\Mod$, and $M \in {}_{ \Bk }\!\Mod$.  The isomorphism between the first and last of these objects extends to the isomorphism in part (iv) of the lemma by \cite[prop.\ 2.3(b)]{AusRepI}.

(v): There are isomorphisms
\begin{align*}
  \Hom_{ Q^{ \opp } }\!\Big( \Hom_{ \Bk }\big( Q( q,- ),\Bk \big),Y \Big)
  & \cong \Hom_{ Q^{ \opp } }\!\big( Q( -,\SerreFunctor q ),Y \big)  && \mbox{Serre duality} \\
  & \cong Y( \SerreFunctor q ) && \mbox{Yoneda's lemma} \\
  & \cong ( Y \circ \SerreFunctor )( q ) && \\
  & \cong ( Y \circ \SerreFunctor ) \Tensor{Q} Q( q,- ), && \mbox{\cite[p.\ 93]{Oberst-Roehrl}}
\end{align*}
natural with respect to $q \in Q_0$ and $Y \in \Mod_Q$.  The isomorphism between the first and last of these objects extends to the isomorphism in part (v) of the lemma by \cite[prop.\ 2.3(b)]{AusRepI}.

(vi): We can compute as follows:
\begin{align*}
  \Hom_{ Q^{ \opp } }\!\big( \Hom_{ \Bk }( P,\Bk ),\Hom_{ \Bk }( X,M ) \circ \SerreFunctor^{ -1 } \big)
  & \cong \big( \Hom_{ \Bk }( X,M ) \circ \SerreFunctor^{ -1 } \circ \SerreFunctor \big) \Tensor{Q} P && \mbox{Part (v)} \\
  & \cong \Hom_{ \Bk }( X,M ) \Tensor{Q} P && \\
  & \cong \Hom_{ \Bk }\!\big( \Hom_Q( P,X ),M \big). && \mbox{Part (iv)} \qedhere
\end{align*}
\end{proof}

\begin{Remark}
\label{rmk:16A_28_35_56}
Lemma \ref{lem:16A_28_35_56} is compatible with additional structures.  For instance, the isomorphism in part (iii) also exists in a version
\[
  \Hom_{ Q,A }\!\big( X,\Hom_{ A^{ \opp } }( Y,M ) \big)
  \cong 
  \Hom_{ Q^{ \opp },A^{ \opp } }\!\big( Y,\Hom_A( X,M ) \big),
\]
natural with respect to $X \in {}_{ Q,A }\!\Mod$, $Y \in \Mod_{ Q^{ \opp },A^{ \opp } }$, and $M \in {}_{ A }\!\Mod_A$.
\end{Remark}

\begin{Lemma}
\label{lem:algebraic_duality}
\begin{enumerate}
\setlength\itemsep{4pt}

    \item  The assignment
\[
  ? \mapsto \Hom_{ \Bk }( ?,\Bk )
\]
defines a duality
\[
  \{\, X \in {}_{ Q }\!\Mod \mid \mbox{$X( q ) \in {}_{ \Bk }\!\prj$ for each $q \in Q_0$} \,\}
  \leftrightarrow
  \{\, Y \in \Mod_Q \mid \mbox{$Y( q ) \in {}_{ \Bk }\!\prj$ for each $q \in Q_0$} \,\}.
\]

  \item  Let $p \in Q_0$ be given.  If $P$ is in ${}_{ Q }\!\prj$ or $\prj_Q$, then $P( p )$ is in ${}_{ \Bk }\!\prj$.

  \item  The duality in part (i) restricts to a duality between ${}_{ Q }\!\prj$ and $\prj_Q$.

\end{enumerate}
\end{Lemma}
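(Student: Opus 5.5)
For part (i), the functor $\Hom_{\Bk}(?,\Bk)$ sends a covariant $\Bk$-linear functor $X \colon Q \to {}_{\Bk}\!\Mod$ to the contravariant functor $q \mapsto \Hom_{\Bk}(X(q),\Bk)$. It therefore sends objects of ${}_{Q}\!\Mod$ to objects of $\Mod_Q$, and symmetrically it sends objects of $\Mod_Q$ to objects of ${}_{Q}\!\Mod$. Since ${}_{\Bk}\!\prj$ is closed under $\Hom_{\Bk}(-,\Bk)$, both functors preserve the condition that every value lies in ${}_{\Bk}\!\prj$. For each finitely generated projective $\Bk$-module $K$, the evaluation map $K \to \Hom_{\Bk}(\Hom_{\Bk}(K,\Bk),\Bk)$ is an isomorphism, and it is natural in $K$. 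Applying it objectwise gives natural isomorphisms $X \to \Hom_{\Bk}(\Hom_{\Bk}(X,\Bk),\Bk)$ and $Y \to \Hom_{\Bk}(\Hom_{\Bk}(Y,\Bk),\Bk)$. These are morphisms of functors on $Q$ precisely because the evaluation map is natural with respect to the maps $X(\alpha)$. Hence the two contravariant functors are quasi-inverse, which proves (i).

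For part (ii), by Definition \ref{def:2} an object $P \in {}_{Q}\!\prj$ has the form $\coprod_{q \in V} Q(q,-) \Tensor{\Bk} K_q$ with $V$ finite and each $K_q \in {}_{\Bk}\!\prj$. Evaluating at $p$ gives
\[
  P(p) = \coprod_{q \in V} Q(q,p) \Tensor{\Bk} K_q .
\]
By (Hom finiteness) each $Q(q,p)$ is in ${}_{\Bk}\!\prj$. A tensor product of two finitely generated projective $\Bk$-modules is again finitely generated projective, and a finite coproduct of such modules is too, so $P(p) \in {}_{\Bk}\!\prj$. The case $P \in \prj_Q$ is the same argument with $Q(p,q)$ in place of $Q(q,p)$.

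For part (iii), part (ii) shows that ${}_{Q}\!\prj$ and $\prj_Q$ lie inside the two categories of part (i). It therefore suffices to show that the duality sends ${}_{Q}\!\prj$ into $\prj_Q$ and $\prj_Q$ into ${}_{Q}\!\prj$. Then the natural isomorphisms from (i) restrict, and the two subcategories correspond because both are closed under isomorphism. Since the duality commutes with finite coproducts, it is enough to treat the summands. For ${}_{Q}\!\prj$, Lemma \ref{lem:16A_28_35_56}(i) gives
\[
  \Hom_{\Bk}\big(Q(q,-) \Tensor{\Bk} K,\Bk\big) \cong Q(-,\SerreFunctor q) \Tensor{\Bk} \Hom_{\Bk}(K,\Bk),
\]
and the right hand side lies in $\prj_Q$ because $\Hom_{\Bk}(K,\Bk) \in {}_{\Bk}\!\prj$. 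For the other direction, I apply the same lemma to $Q^{\opp}$, whose Serre functor is $(\SerreFunctor^{-1})^{\opp}$. Since $Q^{\opp}$ satisfies all the standing assumptions, this gives
\[
  \Hom_{\Bk}\big(Q(-,q) \Tensor{\Bk} K,\Bk\big) \cong Q(\SerreFunctor^{-1} q,-) \Tensor{\Bk} \Hom_{\Bk}(K,\Bk) \in {}_{Q}\!\prj .
\]

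The only point needing care is this last step, namely that applying Lemma \ref{lem:16A_28_35_56}(i) to $Q^{\opp}$ is legitimate and yields the stated formula. If that were in doubt, I would instead derive it directly: compute $\Hom_{\Bk}(Q(-,q),\Bk) \cong Q(\SerreFunctor^{-1}q,-)$ from Serre duality in the form $Q(\SerreFunctor^{-1}q,p) \cong \Hom_{\Bk}(Q(p,\SerreFunctor\SerreFunctor^{-1}q),\Bk)$, and then dualise once more using part (i).
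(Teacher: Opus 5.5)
Your proposal is correct and follows essentially the same route as the paper: objectwise double-dual evaluation for (i), evaluating $\coprod_{q \in V} Q(q,-) \Tensor{\Bk} K_q$ at $p$ and using (Hom finiteness) for (ii), and Lemma~\ref{lem:16A_28_35_56}(i) applied summandwise for (iii). The only difference is that you work out the $\prj_Q \to {}_{Q}\!\prj$ direction explicitly via $Q^{\opp}$, correctly obtaining $Q(\SerreFunctor^{-1}q,-) \Tensor{\Bk} \Hom_{\Bk}(K,\Bk)$, whereas the paper simply appeals to symmetry.
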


\begin{proof}
(i): This holds because $\Hom_{ \Bk }( ?,\Bk )$ is a duality from ${}_{ \Bk }\!\prj$ to itself by \cite[prop.\ 1.4.3]{CFH-book}.  

(ii): By symmetry, it is enough to consider the case where $P$ is in ${}_{ Q }\!\prj$.  But then $P$ has the form \eqref{equ:def:2:10} with $V$ finite and each $K_q$ in ${}_{ \Bk }\!\prj$, whence
\[
  P( p ) = \coprod_{ q \in V } Q( q,p ) \Tensor{\Bk} K_q
\]
is in ${}_{ \Bk }\!\prj$ because so is each $Q( q,p )$ by condition (Hom finiteness) in Section \ref{subsec:notation}.

(iii): Part (ii) says that ${}_{ Q }\!\prj$ and $\prj_Q$ are contained in the two categories in part (i), so it is enough to show that the assignment in part (i) restricts to contravariant functors ${}_{ Q }\!\prj \xrightarrow{} \prj_Q$ and $\prj_Q \xrightarrow{} {}_{ Q }\!\prj$.  By symmetry, it is enough to show the first of these claims.  Let $P$ be in ${}_{ Q }\!\prj$.  Then $P$ has the form \eqref{equ:def:2:10} with $V$ finite and each $K_q$ in ${}_{ \Bk }\!\prj$, whence
\begin{align*}
  \Hom_{ \Bk }( P,\Bk ) & = \Hom_{ \Bk }\!\big( \coprod_{ q \in V } Q( q,- ) \Tensor{\Bk} K_q,\Bk \big) && \\
  & \cong \coprod_{ q \in V } \Hom_{ \Bk }\!\big( Q( q,- ) \Tensor{\Bk} K_q,\Bk \big) && \mbox{$V$ finite} \\
  & \cong \coprod_{ q \in V } Q( -,\SerreFunctor q ) \Tensor{\Bk} \Hom_{ \Bk }( K_q,\Bk ) && \mbox{Lemma \ref{lem:16A_28_35_56}(i)}
\end{align*}
is in $\prj_Q$.
\end{proof}

\begin{Lemma}
\label{lem:17a_17b_17c:1}
Let $p \in Q_0$ be given and let $P$ be a chain complex over ${}_{ Q }\!\prj$.
\begin{enumerate}
\setlength\itemsep{4pt}

  \item  Each of $P( p )$, $S\{ p \} \Tensor{Q} P$, and $\Hom_{ \Bk }( S\{ p \} \Tensor{Q} P,\Bk )$ is a chain complex over ${}_{ \Bk }\!\prj$ and a semiprojective chain complex over ${}_{ \Bk }\!\Mod$.

  \item  If $P$ is acyclic then $P( p )$ is a contractible (also known as split exact) chain complex over ${}_{ \Bk }\!\Mod$.

\end{enumerate}
\end{Lemma}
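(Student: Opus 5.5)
Part (i) is handled degreewise. Part (ii) reduces acyclicity of $P(p)$ to the known chain-complex case via the adjoint triple \eqref{equ:EFG} and semiprojectivity.

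For part (i), each $P_n$ lies in ${}_{ Q }\!\prj$, so $P_n( p ) \in {}_{ \Bk }\!\prj$ by Lemma \ref{lem:algebraic_duality}(ii). For the second complex, write $P_n = \coprod_{ q \in V } Q( q,- ) \Tensor{\Bk} K_q$ with $V$ finite. Then
\[
  S\{ p \} \Tensor{Q} P_n \cong \coprod_{ q \in V } S\{ p \}( q ) \Tensor{\Bk} K_q .
\]
By \eqref{equ:S_formulae_0} and \eqref{equ:S_formulae_1}, this equals $K_p$ if $p \in V$ and $0$ otherwise, so it lies in ${}_{ \Bk }\!\prj$. The $\Bk$-dual of a complex over ${}_{ \Bk }\!\prj$ is again a complex over ${}_{ \Bk }\!\prj$ by \cite[prop.\ 1.4.3]{CFH-book}.

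Semiprojectivity is the subtle point, since these complexes need not be bounded. I will use that $\Bk$ is hereditary. A complex $C$ of projective $\Bk$-modules then has projective boundaries $B$ and cycles $Z$, because they are submodules of projectives. Hence $C$ splits as $C \simeq \coprod_n \big( \Sigma^n Z_n\text{-}B_n \text{ pieces} \big)$: concretely, $C$ is isomorphic to a coproduct of complexes $\Sigma^n(B_{n-1} \hookrightarrow Z_{n-1})$-type two-term pieces and contractible pieces. Each such piece is a bounded complex of projectives, so it is semiprojective. Coproducts of semiprojectives are semiprojective, which gives the claim. If \cite{CFH-book} contains a statement that complexes of projectives over hereditary rings are semiprojective, I would cite it instead.

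For part (ii), assume $P$ is acyclic, meaning it has zero homology as a chain complex in ${}_{ Q }\!\Mod$. Evaluation at $p$ is exact, so $P(p)$ is acyclic. By part (i), $P( p )$ is also semiprojective. An acyclic semiprojective complex is contractible, since $\id_{P(p)}$ is null-homotopic: $\Hom_{\Bk}( P( p ),P( p ) )$ is acyclic. This completes the plan.

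The main obstacle is the semiprojectivity of unbounded complexes in part (i), and the hereditary splitting argument above is how I would handle it.
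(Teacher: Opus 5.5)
Your proposal is correct and follows the paper's proof. Both arguments get degreewise membership in ${}_{\Bk}\!\prj$ from the finite-coproduct form of objects of ${}_{Q}\!\prj$, get semiprojectivity from $\Bk$ being hereditary, and get (ii) from the fact that an acyclic semiprojective complex is contractible. The only difference is that you prove two facts the paper cites (Avramov--Foxby for semiprojectivity, and the result that a quasi-isomorphism between semiprojectives is a homotopy equivalence): you split $0 \to Z_n \to C_n \to B_{n-1} \to 0$ to decompose the complex into two-term pieces, and you note that $\Hom_{\Bk}(P(p),P(p))$ is acyclic, so the identity is null-homotopic.
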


\begin{proof}
(i): $P$ is a chain complex of objects of the form \eqref{equ:def:2:10} with $V$ finite and each $K_q$ in ${}_{ \Bk }\!\prj$.

It follows that $P( p )$ is a chain complex of modules of the form
\[
  \coprod_{ q \in V } Q( q,p ) \Tensor{\Bk} K_q
\]
with $V$ finite and each $K_q$ in ${}_{ \Bk }\!\prj$.  They are in ${}_{ \Bk }\!\prj$ because so is each $Q( q,p )$ by condition (Hom finiteness) in Section \ref{subsec:notation}.  So $P( p )$ is a chain complex over ${}_{ \Bk }\!\prj$.

It also follows that $S\{ p \} \Tensor{Q} P$ is a chain complex of modules of the form
\[
  S\{ p \} \Tensor{Q} \coprod_{ q \in V } Q( q,- ) \Tensor{\Bk} K_q
  \cong
  \coprod_{ q \in V } S\{ p \} \Tensor{Q} Q( q,- ) \Tensor{\Bk} K_q
  \cong
  \coprod_{ q \in V } S\{ p \}( q ) \Tensor{\Bk} K_q
\]
with $V$ finite and each $K_q$ in ${}_{ \Bk }\!\prj$, where we used \cite[p.\ 93]{Oberst-Roehrl}.  They are in ${}_{ \Bk }\!\prj$ because so is each $S\{ p \}( q )$.  So $S\{ p \} \Tensor{Q} P$ is a chain complex over ${}_{ \Bk }\!\prj$, hence so is $\Hom_{ \Bk }( S\{ p \} \Tensor{Q} P,\Bk )$.  

To finish the proof of (i), observe that $P( p )$, $S\{ p \} \Tensor{Q} P$, and $\Hom_{ \Bk }( S\{ p \} \Tensor{Q} P,\Bk )$ are, in particular, chain complexes over ${}_{ \Bk }\!\Prj$, hence semiprojective chain complexes over ${}_{ \Bk }\!\Mod$ because $\Bk$ is hereditary; see \cite[prop.\ 3.4(P)]{Avramov-Foxby_Unbounded}.

(ii): Suppose that $P$ is acyclic.  Then so is $P( p )$, so the morphism $P( p ) \xrightarrow{} 0$ to the zero chain complex is a quasi-isomorphism.  But $P( p )$ is semiprojective by part (i) so $P( p ) \xrightarrow{} 0$ is a homotopy equivalence by \cite[cor.\ 5.2.21]{CFH-book}.  Hence $P( p )$ is contractible.
\end{proof}

\begin{Lemma}
\label{lem:pre:3A}
Let $p \in Q_0$ be given, let $P$ be a chain complex  over ${}_{ Q }\!\prj$, and let $\partial_j$ be the $j$\!'th differential of $P$.  Then $S \{ p \} \Tensor{Q} \partial_j = 0$ if and only if $\Hom_Q( \partial_j,S \langle p \rangle ) = 0$.
\end{Lemma}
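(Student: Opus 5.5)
The plan is to relate the two morphisms through the $\Bk$-duality $\Hom_{ \Bk }( ?,\Bk )$ and to show that each is, up to natural isomorphism, the dual of the other.

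Write $\partial_j : P_j \xrightarrow{} P_{ j-1 }$ with $P_j, P_{ j-1 } \in {}_{ Q }\!\prj$. First I apply Lemma \ref{lem:16A_28_35_56}(iv) with $X = S\langle p \rangle$ and $M = \Bk$, which gives
\[
  \Hom_{ \Bk }( S\langle p \rangle,\Bk ) \Tensor{Q} P_i \cong \Hom_{ \Bk }\!\big( \Hom_Q( P_i,S\langle p \rangle ),\Bk \big).
\]
This isomorphism is natural in $P_i \in {}_{ Q }\!\prj$. By Equation \eqref{equ:S_formulae_1}, $\Hom_{ \Bk }( S\langle p \rangle,\Bk ) \cong S\{ p \}$, so by naturality the morphism $S\{ p \} \Tensor{Q} \partial_j$ is identified with $\Hom_{ \Bk }\!\big( \Hom_Q( \partial_j,S\langle p \rangle ),\Bk \big)$. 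It follows that $\Hom_Q( \partial_j,S\langle p \rangle ) = 0$ implies $S\{ p \} \Tensor{Q} \partial_j = 0$.

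For the converse, I have to check that the dual morphism vanishes only if the original does. For this I need the modules $\Hom_Q( P_i,S\langle p \rangle )$ to lie in ${}_{ \Bk }\!\prj$, since $\Hom_{ \Bk }( ?,\Bk )$ is a duality on ${}_{ \Bk }\!\prj$ by \cite[prop.\ 1.4.3]{CFH-book} and hence faithful there. The computation in the proof of Lemma \ref{lem:pre_5i_5ii_30_32} provides this: it gives
\[
  \Hom_Q( P_i,S\langle p \rangle ) \cong \coprod_{ q \in V } \Hom_{ \Bk }\!\big( K_q,S\langle p \rangle( q ) \big),
\]
which is a finite coproduct of modules in ${}_{ \Bk }\!\prj$, because $S\langle p \rangle( q )$ is $\Bk$ or $0$ by Equation \eqref{equ:S_formulae_0} and each $K_q$ is in ${}_{ \Bk }\!\prj$. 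The morphism $\Hom_Q( \partial_j,S\langle p \rangle )$ therefore lies in ${}_{ \Bk }\!\prj$, and so it is zero if and only if its $\Bk$-dual is zero, which gives the equivalence. (Equivalently, one can dualise once more and use that the tensor side also lies in ${}_{ \Bk }\!\prj$ by Lemma \ref{lem:17a_17b_17c:1}(i).)

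The main obstacle is the naturality bookkeeping. The isomorphism of Lemma \ref{lem:16A_28_35_56}(iv) has to be natural in the ${}_{ Q }\!\prj$ variable, so that it really transports $\partial_j$ on the tensor side to the dual of $\Hom_Q( \partial_j,S\langle p \rangle )$. It also has to be compatible with replacing $\Hom_{ \Bk }( S\langle p \rangle,\Bk )$ by $S\{ p \}$. Both follow from the naturality statements in the lemma and in \eqref{equ:S_formulae_1}, and the remaining steps are routine.
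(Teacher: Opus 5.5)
Your proof is correct and is essentially the paper's argument with the two sides swapped. The paper dualises the tensor side: Hom--tensor adjunction gives $\Hom_{\Bk}( S\{ p \} \Tensor{Q} \partial_j,\Bk ) \cong \Hom_Q( \partial_j,S\langle p \rangle )$, and Lemma \ref{lem:17a_17b_17c:1}(i) puts the tensor side in ${}_{\Bk}\!\prj$. You instead dualise the $\Hom$ side via Lemma \ref{lem:16A_28_35_56}(iv) and get the needed finiteness from Lemma \ref{lem:pre_5i_5ii_30_32}; both versions rest on $\Hom_{\Bk}( ?,\Bk )$ being faithful on ${}_{\Bk}\!\prj$.
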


\begin{proof}
Lemma \ref{lem:17a_17b_17c:1}(i) says that $S\{ p \} \Tensor{Q} P$ is a chain complex over ${}_{ \Bk }\!\prj$ whence $S\{ p \} \Tensor{Q} \partial_j = 0$ if and only if $\Hom_{ \Bk }( S\{ p \} \Tensor{Q} \partial_j,\Bk ) = 0$.  The following isomorphisms now complete the proof:
\begin{align*}
  \Hom_{ \Bk }( S\{ p \} \Tensor{Q} \partial_j,\Bk )
  & \cong
  \Hom_Q\!\big( \partial_j,\Hom_{ \Bk }( S\{ p \},\Bk ) \big) && \mbox{Adjointness} \\
  & \cong
  \Hom_Q( \partial_j,S\langle p \rangle ). && \mbox{Equation \eqref{equ:S_formulae_1}}
\qedhere
\end{align*}
\end{proof}

\begin{Lemma}
\label{lem:17a_17b_17c:2}
Let $p \in Q_0$ be given and let $P$ be a chain complex  over ${}_{ Q }\!\prj$.
\begin{enumerate}
\setlength\itemsep{4pt}
  
  \item  If $L \in {}_{ A }\!\Prj$ is given, then $\Hom_Q( P,S_pL )$ is a semiprojective chain complex over ${}_{ A }\!\Mod$.
  
  \item  If $J \in {}_{ A }\!\Inj$ is given, then $\Hom_Q( P,S_pJ )$ is a semiinjective chain complex over ${}_{ A }\!\Mod$.

\end{enumerate}
\end{Lemma}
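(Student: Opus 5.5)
Write $M$ for $L$ in part (i) and for $J$ in part (ii).  The first step is to rewrite $\Hom_Q( P,S_pM )$ as a chain complex of $A$-modules of the form ``chain complex over ${}_{ \Bk }\!\prj$ tensored with, or homed into, $M$''.  We have $S_pM \cong \Hom_{ \Bk }( S\{ p \},M )$ by Equation \eqref{equ:S_formulae_2}, so Lemma \ref{lem:16A_28_35_56}(iii), in the $A$-compatible form of Remark \ref{rmk:16A_28_35_56}, and tensor-hom adjunction give
\[
  \Hom_Q( P,S_pM ) \cong \Hom_Q\!\big( P,\Hom_{ \Bk }( S\{ p \},M ) \big) \cong \Hom_{ \Bk }( S\{ p \} \Tensor{Q} P,M ),
\]
natural in $M$ and compatible with the differentials; both are complexes of $A$-modules.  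Set $T = S\{ p \} \Tensor{Q} P$.  By Lemma \ref{lem:17a_17b_17c:1}(i), $T$ is a chain complex over ${}_{ \Bk }\!\prj$ and is semiprojective over ${}_{ \Bk }\!\Mod$.

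For part (ii) the plan is to test semiinjectivity by adjunction.  For a chain complex $C$ over ${}_{ A }\!\Mod$ there is a natural isomorphism of Hom complexes
\[
  \Hom_A\!\big( C,\Hom_{ \Bk }( T,J ) \big) \cong \Hom_A( T \Tensor{\Bk} C,J ).
\]
Since $J$ is injective over $A$, the complex $\Hom_A( -,J )$ preserves degreewise surjections and acyclicity, hence preserves quasi-isomorphisms.  Since $T$ is semiprojective, hence semiflat, over $\Bk$, the functor $T \Tensor{\Bk} -$ preserves injective quasi-isomorphisms, or equivalently preserves acyclic complexes together with degreewise injections.  Combining these, $\Hom_{ \Bk }( T,J )$ is a complex of injective $A$-modules, because $T$ is degreewise $\Bk$-projective, and $\Hom_A( -,\Hom_{ \Bk }( T,J ) )$ sends acyclic complexes to acyclic complexes.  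That is the definition of semiinjectivity.

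For part (i) I would use finiteness to turn the Hom into a tensor.  Each $T_n$ is in ${}_{ \Bk }\!\prj$, so $\Hom_{ \Bk }( T,L ) \cong \Hom_{ \Bk }( T,\Bk ) \Tensor{\Bk} L$ degreewise, compatibly with differentials; see \cite[prop.\ 1.4.6(a)]{CFH-book}.  By Lemma \ref{lem:17a_17b_17c:1}(i), $T^\vee = \Hom_{ \Bk }( T,\Bk )$ is semiprojective over $\Bk$.  Then $T^\vee \Tensor{\Bk} L$ has projective $A$-modules in each degree, since $T^\vee_n \in {}_{ \Bk }\!\prj$ and $L \in {}_{ A }\!\Prj$.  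Moreover, $\Hom_A( T^\vee \Tensor{\Bk} L,- ) \cong \Hom_{ \Bk }\!\big( T^\vee,\Hom_A( L,- ) \big)$.  Here $\Hom_A( L,- )$ preserves acyclicity and surjective quasi-isomorphisms because $L$ is projective, and $\Hom_{ \Bk }( T^\vee,- )$ does the same because $T^\vee$ is semiprojective.  So $T^\vee \Tensor{\Bk} L$ is semiprojective.

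The main obstacle is bookkeeping rather than substance.  One must check that the isomorphism of Lemma \ref{lem:16A_28_35_56}(iii) is genuinely $A$-linear and compatible with the differentials and signs of the Hom complexes.  One must also confirm that the degreewise isomorphism $\Hom_{ \Bk }( T_n,L ) \cong T^\vee_n \Tensor{\Bk} L$ assembles into an isomorphism of complexes even though $T$ may be unbounded.  This is harmless here because the isomorphism is degreewise and natural.
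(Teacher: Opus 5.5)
Your proposal is correct and follows the paper's proof. It uses the same rewriting $\Hom_Q(P,S_pM)\cong\Hom_{\Bk}(S\{p\}\Tensor{Q}P,M)$ by tensor--Hom adjointness; Lemma \ref{lem:16A_28_35_56}(iii) is not what is needed at that step, though the adjunction you also invoke suffices. It also uses the same input from Lemma \ref{lem:17a_17b_17c:1}(i), and in part (i) the same tensor-evaluation step to $\Hom_{\Bk}(T,\Bk)\Tensor{\Bk}L$. The only difference is that the paper cites \cite[prop.\ 5.2.22]{CFH-book} and \cite[prop.\ 5.3.25]{CFH-book}, whereas you verify those special cases directly via Hom--tensor adjunction and the characterisation of semiprojectivity/semiinjectivity by preservation of acyclicity; this is fine.
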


\begin{proof}
(i):  We can compute as follows:
\begin{align*}
  \Hom_Q( P,S_pL ) 
  & \cong \Hom_Q\!\big( P,\Hom_{ \Bk }( S\{ p \},L ) \big) && \mbox{Equation \eqref{equ:S_formulae_2}} \\
  & \cong \Hom_{ \Bk }( S\{ p \} \Tensor{Q} P,L ) && \mbox{Adjointness} \\
  & \cong \Hom_{ \Bk }( S\{ p \} \Tensor{Q} P,\Bk \Tensor{\Bk} L ) && \\
  & \cong \Hom_{ \Bk }( S\{ p \} \Tensor{Q} P,\Bk ) \Tensor{\Bk} L && \mbox{\cite[thm.\ 4.5.10(3+a)]{CFH-book} plus Lem.\ \ref{lem:17a_17b_17c:1}(i)} \\
  & = (*).
\end{align*}
Since $\Hom_{ \Bk }( S\{ p \} \Tensor{Q} P,\Bk )$ is a semiprojective chain complex over ${}_{ \Bk }\!\Mod$ by Lemma \ref{lem:17a_17b_17c:1}(i) while $L$ is in ${}_{ A }\!\Prj$, hence a semiprojective chain complex over ${}_{ A }\!\Mod$ if viewed as concentrated in degree zero, it follows from \cite[prop.\ 5.2.22]{CFH-book} that $(*)$ is a semiprojective chain complex over ${}_{ A }\!\Mod$.

(ii):  Computing as in the proof of part (i), we have:
\[
  \Hom_Q( P,S_pJ ) 
  \cong \Hom_Q\!\big( P,\Hom_{ \Bk }( S\{ p \},J ) \big) 
  \cong \Hom_{ \Bk }( S\{ p \} \Tensor{Q} P,J )
  = (**).
\]
Since $S\{ p \} \Tensor{Q} P$ is a semiprojective chain complex over ${}_{ \Bk }\!\Mod$ by Lemma \ref{lem:17a_17b_17c:1}(i) while $J$ is in ${}_{ A }\!\Inj$, hence a semiinjective chain complex over ${}_{ A }\!\Mod$ if viewed as concentrated in degree zero, it follows from \cite[prop.\ 5.3.25]{CFH-book} that $(**)$ is a semiinjective chain complex over ${}_{ A }\!\Mod$.
\end{proof}

\begin{Lemma}
\label{lem:40}
Let $p \in Q_0$ and $j \in \BZ$ be given.  For $X \in {}_{ Q,A }\!\Mod$ and $M \in {}_{ A }\!\Mod$, there are isomorphisms
\begin{align*}
  \Ext_A^j( E_pX,M ) & \cong \Ext_{ Q,A }^j( X,G_pM ), \\[1mm]
  \Ext_A^j( M,E_pX ) & \cong \Ext_{ Q,A }^j( F_pM,X )
\end{align*}
in ${}_{ \Bk }\!\Mod$.  
\end{Lemma}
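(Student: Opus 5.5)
We derive both isomorphisms from the adjoint triple \eqref{equ:EFG}, $F_p \dashv E_p \dashv G_p$, by the standard argument that an exact functor with an exact adjoint transports resolutions. The key observation is that $E_p$ is exact, being evaluation at $p$, and that $F_p$ and $G_p$ are also exact. Indeed, $F_pM = Q( p,- ) \Tensor{\Bk} M$ is exact in $M$ because each $Q( p,q )$ is a finitely generated projective, hence flat, $\Bk$-module by condition (Hom finiteness). Likewise $G_pM = \Hom_{ \Bk }\!\big( Q( -,p ),M \big)$ is exact in $M$ because each $Q( q,p )$ is projective over $\Bk$; exactness in ${}_{ Q,A }\!\Mod$ is checked pointwise.

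For the first isomorphism, choose an injective resolution $M \xrightarrow{} I$ in ${}_{ A }\!\Mod$. Since $G_p$ is right adjoint to the exact functor $E_p$, it preserves injective objects: $\Hom_{ Q,A }( -,G_pI^n ) \cong \Hom_A( E_p(-),I^n )$ is exact. Since $G_p$ is exact, $G_pM \xrightarrow{} G_pI$ is an injective resolution in ${}_{ Q,A }\!\Mod$. Applying the adjunction isomorphism degreewise, naturally in the resolution, gives an isomorphism of complexes
\[
  \Hom_A( E_pX,I ) \cong \Hom_{ Q,A }( X,G_pI ).
\]
Taking cohomology in degree $j$ yields $\Ext_A^j( E_pX,M ) \cong \Ext_{ Q,A }^j( X,G_pM )$.

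The second isomorphism is dual. Choose a projective resolution $P \xrightarrow{} M$ in ${}_{ A }\!\Mod$. The functor $F_p$ is left adjoint to the exact functor $E_p$, so it preserves projectives. Since $F_p$ is exact, $F_pP \xrightarrow{} F_pM$ is a projective resolution in ${}_{ Q,A }\!\Mod$. The adjunction then gives $\Hom_A( P,E_pX ) \cong \Hom_{ Q,A }( F_pP,X )$ as complexes, and taking cohomology gives the claim. For $j < 0$ both sides vanish, so the statement for all $j \in \BZ$ holds trivially there.

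The only point needing care is the exactness of $F_p$ and $G_p$, together with the pointwise description of exactness in the functor category. Exactness comes directly from (Hom finiteness) as described above. Beyond that, the argument is formal, and I expect no real obstacle.
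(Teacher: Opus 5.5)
Your proof is correct and follows essentially the same route as the paper. The paper also observes that $E_p$, $F_p$, $G_p$ are exact, citing \cite[cor.\ 3.9]{HJ-JLMS} together with (Hom finiteness), and then invokes \cite[lem.\ 5.1]{HJ-Kyoto}; that lemma is precisely the transport-of-resolutions argument for adjoint pairs of exact functors, which you carry out by hand.
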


\begin{proof}
The functor $E_p$ is exact because it acts by evaluating at $p$.  The functors $F_p$ and $G_p$ are exact by \cite[cor.\ 3.9]{HJ-JLMS} combined with condition (Hom-finiteness) in Section \ref{subsec:notation}.  The lemma hence follows from \cite[lem.\ 5.1]{HJ-Kyoto}.
\end{proof}

\begin{Lemma}
\label{lem:43}
The following conditions are equivalent for $X$ in ${}_{ Q,A }\!\Mod$:
\begin{enumerate}
\setlength\itemsep{4pt}
 
  \item  $X \in {}_{ Q,A }\cE$.

  \item  If $p \in Q_0$ then $\Ext^1_{ Q,A }( S_pA,X ) = 0$.

  \item  If $p \in Q_0$ and $J \in {}_{ A }\!\Inj$ then $\Ext^1_{ Q,A }( X,S_pJ ) = 0$.

\end{enumerate}
\end{Lemma}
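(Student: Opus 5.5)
The plan is to reduce all three conditions to statements about $X^{\natural}$ in ${}_{ Q }\!\Mod$, where the results of \cite{HJ-JLMS} are available.

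\textbf{Reduction to the pseudosimples.} By \cite[thm.\ 4.4 and sec.\ 7]{HJ-JLMS}, ${}_{ Q }\cE$ is the left half of a cotorsion pair whose right half is ${}_{ Q }\!\Inj$'s complement to the Gorenstein injectives. More concretely, since $\fr$ is nilpotent, every object of ${}_{ Q }\!\Mod$ admits a finite filtration whose subquotients are of the form $S\langle p \rangle \Tensor{\Bk} K$. Using this, I expect that \cite{HJ-JLMS} gives the following criterion: an object $Z$ lies in ${}_{ Q }\cE$ if and only if $\Ext^1_Q( S\langle p \rangle,Z ) = 0$ for all $p$. Equivalently, $Z$ has finite projective dimension exactly when it has finite injective dimension, and this can be tested against pseudosimples.

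\textbf{(i)$\Leftrightarrow$(ii).} We have $S_pA = S\langle p \rangle \Tensor{\Bk} A$. The functor $- \Tensor{\Bk} A : {}_{ Q }\!\Mod \to {}_{ Q,A }\!\Mod$ is left adjoint to the forgetful functor $(-)^{\natural}$. It is exact on objects with $\Bk$-projective values, such as $S\langle p \rangle$, and it sends projectives to projectives. Take a projective resolution of $S\langle p \rangle$ by objects of ${}_{ Q }\!\Prj$. All its terms have $\Bk$-projective values, and by \cite[prop.\ 3.4(P)]{Avramov-Foxby_Unbounded} applied to the hereditary ring $\Bk$ it stays exact after $\Tensor{\Bk} A$. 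This yields
\[
  \Ext^j_{ Q,A }( S_pA,X ) \cong \Ext^j_Q( S\langle p \rangle,X^{\natural} ).
\]
So (ii) is equivalent to $\Ext^1_Q( S\langle p \rangle,X^{\natural} ) = 0$ for all $p$, and by the criterion above this is equivalent to $X^{\natural} \in {}_{ Q }\cE$, i.e.\ to (i).

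\textbf{(i)$\Leftrightarrow$(iii).} Dually, $S_pJ = \Hom_{ \Bk }( S\{ p \},J )$. Fix $J \in {}_{ A }\!\Inj$; its underlying $\Bk$-module is injective because $A$ is flat over itself, hence over $\Bk$ via the adjunction. Take a projective resolution $P$ of $X$ in ${}_{ Q,A }\!\Mod$. By Remark \ref{rmk:16A_28_35_56} (part (iii) with the $A$-structure) and adjunction,
\[
  \Hom_{ Q,A }( P,S_pJ ) \cong \Hom_A( S\{ p \} \Tensor{Q} P,J ).
\]
This gives $\Ext^j_{ Q,A }( X,S_pJ ) \cong \Hom_A\big( \Tor^Q_j( S\{ p \},X ),J \big)$, using that $J$ is injective. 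Since $J$ ranges over all injectives, including an injective cogenerator, (iii) is equivalent to $\Tor^Q_1( S\{ p \},X ) = 0$ for all $p$.

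Computing this Tor with a projective resolution of $S\{ p \}$ in $\Mod_Q$ shows that it depends only on $X^{\natural}$. By the symmetric version of the criterion (flat/Tor characterisation of ${}_{ Q }\cE$, \cite[sec.\ 7]{HJ-JLMS}), this vanishing is equivalent to $X^{\natural} \in {}_{ Q }\cE$. Alternatively, one can pass through the $\Bk$-dual: Lemma \ref{lem:16A_28_35_56}(iii) gives $\Tor^Q_1( S\{ p \},X )^{\vee} \cong \Ext^1_Q( X,S\langle p \rangle \Tensor{\Bk} \Bk^{\vee} )$ with an injective cogenerator, and then ${}_{ Q }\cE = {}^{\perp}$(injectives built from $S\langle p \rangle$) finishes the argument.

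\textbf{Main obstacle.} The main obstacle is pinning down the precise characterisation of ${}_{ Q }\cE$ by vanishing of $\Ext^1$ (or $\Tor_1$) against the pseudosimples from \cite{HJ-JLMS}, specifically that degree $1$ alone suffices. My first attempt would be the following dimension shift: since $\fr^N=0$, any $Q$-module has finite filtrations by $S\langle p \rangle$-type layers, so ${}_{ Q }\cE$ is closed under the relevant syzygies. Then $\Ext^1$-vanishing for all $p$ propagates to all higher degrees through the projective resolutions of the $S\langle p \rangle$, which are eventually periodic up to the Serre functor.
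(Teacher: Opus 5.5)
Your proof is correct in outline, but it takes a different route from the paper's.

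The paper's route:
\begin{itemize}
\item It outsources (i)$\Leftrightarrow$(ii) and (i)$\Rightarrow$(iii) to \cite{HJ-TAMS}.
\item For (iii)$\Rightarrow$(i) it tests only against the coinduced injectives $J=\Hom_{\Bk}(A,K)$ with $K\in{}_{\Bk}\!\Inj$.
\item It then descends to $\Ext^1_Q(X,S_pK)=0$ using the \emph{right} adjoint $\Hom_{\Bk}(A,-)$ of the forgetful functor \cite[lem.\ 4.3(b)]{HJ-JLMS}.
\item It finishes with \cite[thm.\ 7.25]{HJ-JLMS} together with ${}_{\Bk}\!\Inj={}_{\Bk}\GInj$ for hereditary $\Bk$.
\end{itemize}

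Your route:
\begin{itemize}
\item You use the \emph{left} adjoint $A\Tensor{\Bk}-$ to get $\Ext^j_{Q,A}(S_pA,X)\cong\Ext^j_Q(S\langle p\rangle,X)$. This settles (i)$\Leftrightarrow$(ii) once combined with the degree-one criterion you only ``expect''. That criterion is \cite[thm.\ 7.1]{HJ-JLMS}, which this paper invokes in Lemma \ref{lem:5v}, so your speculative dimension-shifting paragraph is unnecessary.
\item For (iii), your identity $\Ext^j_{Q,A}(X,S_pJ)\cong\Hom_A\big(\operatorname{Tor}^Q_j(S\{p\},X),J\big)$ proves both directions at once. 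It also shows that (iii) sees $J$ only through a cogenerator.
\item The price is that you need the $\operatorname{Tor}$ form of the exactness criterion, whose exact reference in \cite{HJ-JLMS} you should pin down. Alternatively, your $\Bk$-dual variant lands exactly on the paper's last step. There $\Ext^1_Q(X,S_pK)\cong\Hom_{\Bk}\big(\operatorname{Tor}^Q_1(S\{p\},X),K\big)$ vanishes for every $K\in{}_{\Bk}\!\Inj$, not just a cogenerator. The hereditary hypothesis (${}_{\Bk}\!\Inj={}_{\Bk}\GInj$) is what lets \cite[thm.\ 7.25]{HJ-JLMS} apply; you omitted this point.
\end{itemize}

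Three small repairs are needed.
\begin{itemize}
\item Identifying $\H_j(S\{p\}\Tensor{Q}P)$ with a $\operatorname{Tor}$ computed in ${}_{Q}\!\Mod$ requires an argument. Projectives of ${}_{Q,A}\!\Mod$ are summands of $\coprod_q Q(q,-)\Tensor{\Bk}L_q$. These are acyclic for $S\{p\}\Tensor{Q}-$ because $S\{p\}$ has $\Bk$-projective values.
\item The isomorphism you need is the tensor--hom adjunction $\Hom_{\Bk}(Y\Tensor{Q}X,M)\cong\Hom_Q\big(X,\Hom_{\Bk}(Y,M)\big)$, not Lemma \ref{lem:16A_28_35_56}(iii).
\item Your aside that an injective $A$-module is injective over $\Bk$ ``because $A$ is flat over itself'' is false. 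For example, take $\Bk=\BZ$ and $A=J=\BZ/2$. It is harmless only because your computation uses nothing but the $A$-injectivity of $J$, so delete it.
\end{itemize}
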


\begin{proof}
Conditions (i) and (ii) are equivalent by the proof of \cite[thm.\ D]{HJ-TAMS}, and (i) implies (iii) by the first inclusion of \cite[prop.\ 3.2(b)]{HJ-TAMS}.

Suppose that (iii) holds.  Let $p \in Q_0$ and $K \in {}_{ \Bk }\!\Inj$ be given and set $J = \Hom_{ \Bk }( A,K )$.  We have $J \in {}_{ A }\!\Inj$ by \cite[cor.\ 5.4.28(a)]{CFH-book}, and inserting $J$ into (iii) gives
\[
  \Ext^1_{ Q,A }\!\big( X,S_p\Hom_{ \Bk }( A,K ) \big) = 0.
\]
But $S_p\Hom_{ \Bk }( A,K )$ and $\Hom_{ \Bk }( A,S_pK )$ are isomorphic because both are $Q$-shaped diagrams with $\Hom_{ \Bk }( A,K )$ placed at $p$ and $0$ elsewhere.  So the previous formula implies
\[
  \Ext^1_{ Q,A }\!\big( X,\Hom_{ \Bk }( A,S_pK ) \big) = 0,
\]
which by \cite[lem.\ 4.3(b)]{HJ-JLMS} reads
\[
  \Ext^1_Q( X,S_pK ) = 0.
\]
Since this holds for each $p \in Q_0$ and $K \in {}_{ \Bk }\!\Inj$, condition (i) holds by \cite[thm.\ 7.25]{HJ-JLMS} combined with the observation ${}_{ \Bk }\Inj = {}_{ \Bk }\GInj$, which is true by \cite[thm.\ 2.22]{Holm-GHD} because $\Bk$ is hereditary.
\end{proof}

\section{Complete projective resolutions}
\label{sec:complete}

This section constructs a complete projective resolution $P\langle q \rangle$ which can be used to define the functor $\LFunctorHomAngles{q}$ in Section \ref{sec:L}.

The following definition goes back to \cite[p.\ 67]{Mangeney-Peskine-Szpiro} and was formalised in \cite[def.\ 5.1]{Enochs-Jenda-GorInjProj}.

\begin{Definition}
\label{def:Gorenstein_projective_objects}
Let $\cA$ be an abelian category with enough projective objects.

A Gorenstein projective object $G$ of $\cA$ is an object which has a complete projective resolution $P$, that is, a chain complex $P$ permitting a diagram
\[
\vcenter{
\xymatrix @-5.5pc @! {
  P = \cdots \ar[rr] && P_1 \ar^{ \partial_1 }[rr] && P_0 \ar^{ \partial_0 }[rr] \ar@{->>}_<<<{ \varepsilon }[dr] && P_{ -1 } \ar^{ \partial_{ -1 } }[rr] && P_{ -2 } \ar[rr] && \cdots, \\
  &&&&& G = \Coker \partial_1 
                              \ar_>>>{ \eta }[ur] &&&& \\
                  }
        }
\]
where $\varepsilon$ and $\eta$ are the canonical morphisms, such that the following conditions are satisfied:
\begin{itemize}
\setlength\itemsep{4pt}

  \item  Each $P_i$ is a projective object of $\cA\!$.

  \item  $P$ is acyclic.
  
  \item  If $L$ is a projective object of $\cA\!$, then $\cA( P,L )$ is acyclic.

\end{itemize}
The three bullets can be summed up by saying that $P$ is a totally acyclic complex of projective objects.
\end{Definition}

\begin{Example}
Each projective object $L$ is a Gorenstein projective object because we can let the diagram in Definition \ref{def:Gorenstein_projective_objects} be given as follows:
\[
\vcenter{
\xymatrix @-1.55pc @! {
  \cdots \ar[rr] && 0 \ar[rr] && L \ar^{ \id }[rr] \ar@{->>}_<<<{ \id }[dr] && L \ar[rr] && 0 \ar[rr] && \cdots. \\
  &&&&& 
        L \ar_>>>>{ \id }[ur] &&&& \\
                  }
        }
\]
\end{Example}

\begin{Example}
If $q \in Q_0$ then \cite[cor.\ 4.8]{DSS} implies that $S\langle q \rangle$ is a Gorenstein projective object of ${}_{ Q }\!\Mod$, and dually, $S\{ q \}$ is a Gorenstein projective object of $\Mod_Q$.  Note that except for (very) special cases, $S\langle q \rangle$ and $S\{ q \}$ are not projective objects.
\end{Example}

Before constructing a complete projective resolution, we state two definitions expressing a weak form of minimality.

\begin{Definition}
\label{def:P_angles}
A complete projective resolution $P\langle q \rangle$ of $S\langle q \rangle$ is called good if it satisfies the following:
\begin{enumerate}
\setlength\itemsep{4pt}

  \item  The objects in degrees $0$ and $-1$ satisfy
$P\langle q \rangle_0 \cong Q( q,- )$ and $P\langle q \rangle_{ -1 } \cong Q( \SerreFunctor^{ -1 }q,- )$.
  
  \item  The zeroth differential $\partial_0$ of $P\langle q \rangle$ satisfies $\Hom_Q( S\langle p \rangle,\partial_0 ) = 0$ and $\Hom_Q( \partial_0,S\langle p \rangle ) = 0$ for each $p \in Q_0$.

\end{enumerate}
\end{Definition}

Definition \ref{def:P_angles} can be applied to $Q^{ \opp }$.  Then it concerns $S\{ q \}$, the contravariant counterpart of $S\langle q \rangle$, and has the following form, where we recall from Section \ref{subsec:notation} that the Serre functor of $Q^{ \opp }$ is $\SerreFunctor^{ -1 }$ viewed as a functor $Q^{ \opp } \xrightarrow{} Q^{ \opp }$.

\begin{Definition}
\label{def:P_brackets}
A complete projective resolution $P\{ q \}$ of $S\{ q \}$ is called good if it satisfies the following:
\begin{enumerate}
\setlength\itemsep{4pt}

  \item  The objects in degrees $0$ and $-1$ satisfy $P\{ q \}_0 \cong Q( -,q )$ and $P\{ q \}_{ -1 } \cong Q( -,\SerreFunctor q )$.
 
  \item  The zeroth differential $\partial_0$ of $P\{ q \}$ satisfies $\Hom_{ Q^{ \opp } }( S\{ p \},\partial_0 ) = 0$ and $\Hom_{ Q^{ \opp } }( \partial_0,S\{ p \} ) = 0$ for each $p \in Q_0$.

\end{enumerate}
\end{Definition}

We now recall from \cite[sec.\ 2.4.3]{Jasso-Q-shaped} and \cite[constr.\ 4.20]{Slaftsos-Vitoria} the construction of a complete projective resolution.

\begin{Definition}
\label{def:3}
Let $q \in Q_0$ be given.  Starting from $S\langle q \rangle$ we construct a chain complex $P\langle q \rangle$ over ${}_{ Q }\!\Mod$ by gluing the short exact sequences of the following diagram:
\[
\vcenter{
\xymatrix @-5.5pc @! {
  &&& \Omega^1 S\langle q \rangle \ar@{}[dr]^(.25){}="a"^(.80){}="b" \ar@{^(->}^{ \zeta_1 } "a";"b" &&&& \Omega^{ -1 } S\langle q \rangle \ar@{}[dr]^(.25){}="a"^(.80){}="b" \ar@{^(->}^{ \eta_{ -1 } } "a";"b" \\
  \cdots \ar[rr] && P\langle q \rangle_1 \ar^{ \partial_1 }[rr] \ar@{->>}^{ \varepsilon_1 }[ur] && P\langle q \rangle_0 \ar^{ \partial_0 }[rr] \ar@{->>}_<<<{ \varepsilon_0 }[dr] && P\langle q \rangle_{ -1 } \ar^{ \partial_{ -1 } }[rr] \ar@{->>}^<<<<{ \theta_{ -1 } }[ur] && P\langle q \rangle_{ -2 } \ar[rr] \ar@{->>}^>>>{ \theta_{ -2 } }[dr] && \cdots. \\
  & \Omega^2 S\langle q \rangle \ar@{^(->}^<<<{ \zeta_2 }[ur] &&&& \Omega^0 S\langle q \rangle = S\langle q \rangle \ar@{^(->}_>>>{ \eta_0 }[ur] &&&& \Omega^{ -2 } S\langle q \rangle \\
                  }
        }
\]
The sequences are obtained iteratively using the short exact sequences
\begin{gather*}
  0 \xrightarrow{} \BK X \xrightarrow{ \zeta } \BF X \xrightarrow{ \varepsilon } X \xrightarrow{} 0, \\[1mm]
  0 \xrightarrow{} Y \xrightarrow{ \eta } \BG Y \xrightarrow{ \theta } \BC Y \xrightarrow{} 0
\end{gather*}
of \cite[prop.\ 5.7]{HJ-TAMS} in the special case $A = \Bk$, where $X$ runs through $\Omega^i S\langle q \rangle$ for $i \geqslant 0$ and $Y$ runs through $\Omega^i S\langle q \rangle$ for $i \leqslant 0$.  The functors $\BK$, $\BF$, $\BG$, $\BC$ were defined in \cite[constr.\ 5.6]{HJ-TAMS}.
\end{Definition}

\begin{Lemma}
\label{lem:3}
Let $q \in Q_0$ be given and consider $P\langle q \rangle$ of Definition \ref{def:3}.
\begin{enumerate}
\setlength\itemsep{4pt}

  \item  $P\langle q \rangle$ is a complete projective resolution of $S\langle q \rangle$ consisting of objects of ${}_{ Q }\!\prj$.
  
  \item  $P\langle q \rangle$ satisfies Definition \ref{def:P_angles}(i).

  \item  If $\SerreFunctor q \neq q$ then $P\langle q \rangle$ also satisfies Definition \ref{def:P_angles}(ii) whence $P\langle q \rangle$ is good.

\end{enumerate}
The condition $\SerreFunctor q \neq q$ is satisfied in many examples, some of which are stated in the introduction after Theorem \ref{thm:B}.
\end{Lemma}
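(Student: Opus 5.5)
The plan is to unwind the constructions of \cite[constr.\ 5.6 and prop.\ 5.7]{HJ-TAMS} in the special case $A = \Bk$. For parts (i) and (ii) we compute the first few terms explicitly and then check finiteness and total acyclicity. For part (iii) we use Serre duality to show that, when $\SerreFunctor q \neq q$, no pseudosimple can map nontrivially into or out of $\partial_0$.

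\textbf{Parts (i) and (ii).} First I will recall that $\BF X$ is a coproduct of objects $Q( p,- ) \Tensor{\Bk} K_p$ with $K_p$ built from $X$ at $p$ (essentially $S\{ p \} \Tensor{Q} X$). Dually, $\BG Y$ is a product of objects $\Hom_{ \Bk }( Q( -,p ),K'_p )$.

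For $X = Y = S\langle q \rangle$, Equation \eqref{equ:S_formulae_0} makes these terms concentrated at $p = q$ with $K_q = \Bk$. This gives $P\langle q \rangle_0 = \BF S\langle q \rangle \cong Q( q,- )$. It also gives
\[
  P\langle q \rangle_{ -1 } = \BG S\langle q \rangle \cong \Hom_{ \Bk }( Q( -,q ),\Bk ) \cong Q( \SerreFunctor^{ -1 } q,- ),
\]
using Lemma \ref{lem:16A_28_35_56}(i) for $Q^{ \opp }$, i.e.\ Serre duality. This proves (ii).

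For (i) I will argue inductively that every $\Omega^i S\langle q \rangle$ takes values in ${}_{ \Bk }\!\prj$ and has finite support. This uses (Hom finiteness), (Local boundedness), and the fact that $\Bk$ is hereditary, so that submodules of finitely generated projectives are again finitely generated projective. It follows that each $\BF$ and $\BG$ term lies in ${}_{ Q }\!\prj$; the $\BG$ terms land there by Lemma \ref{lem:algebraic_duality}(iii). Acyclicity of $P\langle q \rangle$ holds by construction, since it is a gluing of short exact sequences.

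Total acyclicity amounts to $\Ext^1_Q( \Omega^i S\langle q \rangle,L ) = 0$ for every $i$ and every projective $L$. I plan to obtain this from \cite[prop.\ 5.7]{HJ-TAMS}: the functors $\BK$ and $\BC$ preserve the left perpendicular class ${}^{ \perp }( {}_{ Q }\cE )$. Moreover, $S\langle q \rangle$ lies in that class, being Gorenstein projective by \cite[cor.\ 4.8]{DSS}. Finally, projectives lie in ${}_{ Q }\cE$, since they have projective dimension $0$.

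\textbf{Part (iii).} Since $\partial_0 = \eta_0 \varepsilon_0$ factors through $S\langle q \rangle$, it suffices to prove two vanishing statements: $\Hom_Q( S\langle p \rangle,P\langle q \rangle_0 )$ must vanish whenever a composite with $\varepsilon_0$ could be nonzero, and $\Hom_Q( P\langle q \rangle_{ -1 },S\langle p \rangle )$ must vanish whenever a composite with $\eta_0$ could be nonzero.

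A nonzero morphism $S\langle p \rangle \xrightarrow{} S\langle q \rangle$ forces $p = q$. But $Q( q,- ) \cong \Hom_{ \Bk }( Q( -,\SerreFunctor q ),\Bk )$ by Serre duality. So, using adjointness and Equation \eqref{equ:S_formulae_1}, we get
\[
  \Hom_Q( S\langle q \rangle,Q( q,- ) ) \cong \Hom_{ \Bk }\big( Q( -,\SerreFunctor q ) \Tensor{Q} S\langle q \rangle,\Bk \big).
\]
The tensor product on the right is a quotient of $S\langle q \rangle( \SerreFunctor q )$, which is $0$ because $\SerreFunctor q \neq q$. Hence every map $S\langle p \rangle \xrightarrow{} P\langle q \rangle_0$ either vanishes or is killed by $\varepsilon_0$. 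Either way, $\Hom_Q( S\langle p \rangle,\partial_0 ) = 0$.

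Dually, Yoneda gives $\Hom_Q( Q( \SerreFunctor^{ -1 } q,- ),S\langle p \rangle ) \cong S\langle p \rangle( \SerreFunctor^{ -1 } q )$, which is nonzero only if $p = \SerreFunctor^{ -1 } q$. A nonzero composite with $\eta_0$ would need a nonzero map $S\langle q \rangle \xrightarrow{} S\langle p \rangle$, hence $p = q$. Together these force $q = \SerreFunctor^{ -1 } q$, a contradiction, so $\Hom_Q( \partial_0,S\langle p \rangle ) = 0$.

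\textbf{Main obstacle.} I expect the main obstacle to be the total acyclicity in (i). It relies on correctly extracting, from \cite{HJ-TAMS}, that $\BK$ and $\BC$ preserve Gorenstein projectivity together with values in ${}_{ \Bk }\!\prj$ and finite support; if this is not directly available there, I would instead verify $\Ext^1$-vanishing against projectives inductively, using the defining short exact sequences.
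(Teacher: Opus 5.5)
Your parts (ii) and (iii) are correct. In (iii) you argue a little differently from the paper. The paper computes $\Hom_Q( S\langle p \rangle,Q( r,- ) ) \cong \Hom_{ \Bk }( S\langle p \rangle( \SerreFunctor r ),\Bk )$ and observes that, when $\SerreFunctor q \neq q$, for each $p$ either the source or the target of $\Hom_Q( S\langle p \rangle,\partial_0 )$ vanishes (and dually). You instead factor $\partial_0 = \eta_0 \varepsilon_0$ through $S\langle q \rangle$ and use $\Hom_Q( S\langle p \rangle,S\langle q \rangle ) = 0$ for $p \neq q$. Both work.

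The gap is in (i), in two places.

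\textbf{The induction on pointwise values.} Heredity covers only the kernel side: $\BK X$ is a pointwise submodule, so it is pointwise in ${}_{ \Bk }\!\prj$. But $\BC Y = \Coker \eta$ is a pointwise quotient, and a quotient of a finitely generated projective $\Bk$-module by a projective submodule need not be projective ($\BZ/2\BZ$). What actually drives the induction is the shape of the construction:
\[
  \BF X = \coprod_p F_pE_pX = \coprod_p Q( p,- ) \Tensor{\Bk} X( p ),
  \qquad
  \BG Y = \prod_p G_pE_pY .
\]
So $K_p = X( p )$, not $S\{ p \} \Tensor{Q} X$. Consequently $\varepsilon$ is pointwise split epi and $\eta$ is pointwise split mono (evaluate at $\id_x$). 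Hence $\BK X$ and $\BC Y$ are pointwise direct summands of modules in ${}_{ \Bk }\!\prj$. The paper simply cites \cite[lem.\ 5.8(b)]{HJ-TAMS} for this.

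\textbf{Total acyclicity.} You rely on $\BK$ and $\BC$ preserving ${}^{ \perp }( {}_{ Q }\cE )$, attributed to \cite[prop.\ 5.7]{HJ-TAMS}; the paper cites that result only for the existence of the short exact sequences. For $\BK$ the preservation is a routine dimension shift, since $\BF X$ is projective. For $\BC$ it is not formal. The class ${}^{ \perp }( {}_{ Q }\cE )$ is not closed under cokernels of monomorphisms. Moreover, $\Ext^1_Q( \BC Y,E ) = 0$ amounts to every morphism $Y \xrightarrow{} E$ extending along $\eta$, which you never justify, and your fallback does not say how you would.

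The detour is unnecessary. Once each $\Omega^i S\langle q \rangle$ takes values in ${}_{ \Bk }\!\prj$, \cite[cor.\ 4.8]{DSS} applies to every $\Omega^i S\langle q \rangle$, not just to $S\langle q \rangle$ where you already invoke it. It makes each $\Omega^i S\langle q \rangle$ Gorenstein projective, which gives $\Ext^1_Q( \Omega^i S\langle q \rangle,L ) = 0$ for projective $L$ directly. That is the paper's argument, finished there with \cite[prop.\ 2.3]{Holm-GHD}.
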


\begin{proof}
(i):  First, we show that $P\langle q \rangle$ satisfies the three bullets in Definition \ref{def:Gorenstein_projective_objects}:

It follows from \cite[lem.\ 5.8(b)]{HJ-TAMS} that $P\langle q \rangle$ consists of objects of ${}_{ Q }\!\prj$, hence of projective objects of ${}_{ Q }\!\Mod$.  

By construction, $P\langle q \rangle$ is acyclic.  

For the claim that $\Hom_Q( P\langle q \rangle,L )$ is acyclic when $L$ is projective, observe that each $\Omega^iS\langle q \rangle$ is Gorenstein projective in ${}_{ Q }\!\Mod$ by \cite[cor.\ 4.8]{DSS} because $( \Omega^iS\langle q \rangle )( p ) \in {}_{ \Bk }\!\prj$ for each $p \in Q_0$; the last fact follows from \cite[lem.\ 5.8(b)]{HJ-TAMS}.  Hence, up to degree shift, the truncated chain complex $\cdots \xrightarrow{} P\langle q \rangle_{ i+1 } \xrightarrow{} P\langle q \rangle_i$ is a projective resolution of a Gorenstein projective object for each $i$.  This implies that the chain complex $\Hom_Q( P\langle q \rangle_i,L ) \xrightarrow{} \Hom_Q( P\langle q \rangle_{ i+1 },L ) \xrightarrow{} \cdots$ only has non-zero homology at the start by \cite[prop.\ 2.3]{Holm-GHD}.  It follows that $\Hom_Q( P\langle q \rangle,L )$ is acyclic.

Secondly, $P\langle q \rangle$ permits the diagram in Definition \ref{def:3}, which can be used in Definition \ref{def:Gorenstein_projective_objects}, so $P\langle q \rangle$ is indeed a complete projective resolution of $S\langle q \rangle$.

In the proofs of parts (ii) and (iii) of the lemma, note that the functors $E_q$, $F_q$, $G_q$ of Equation \eqref{equ:EFG} are considered in the special case $A = \Bk$. 

(ii):  We have
\begin{align*}
  P\langle q \rangle_0 & = \BF S\langle q \rangle && \mbox{Definition \ref{def:3}} \\
  & = \coprod_{ p \in Q_0 } F_pE_p( S\langle q \rangle ) && \mbox{\cite[constr.\ 5.6]{HJ-TAMS}} \\
  & = \coprod_{ p \in Q_0 } F_p\big( S\langle q \rangle( p ) \big) && \mbox{Equation \eqref{equ:EFG}} \\
  & \cong F_q( \Bk ) && \mbox{Equation \eqref{equ:S_formulae_0}} \\
  & = Q( q,- ) \Tensor{\Bk} \Bk && \mbox{Equation \eqref{equ:EFG}} \\
  & \cong Q( q,- ) && 
\end{align*}
and
\begin{align*}
  P\langle q \rangle_{ -1 } & = \BG S\langle q \rangle && \mbox{Definition \ref{def:3}} \\
  & = \prod_{ p \in Q_0 } G_pE_p( S\langle q \rangle ) && \mbox{\cite[constr.\ 5.6]{HJ-TAMS}} \\
  & = \prod_{ p \in Q_0 } G_p\big( S\langle q \rangle( p ) \big) && \mbox{Equation \eqref{equ:EFG}} \\
  & \cong G_q( \Bk ) && \mbox{Equation \eqref{equ:S_formulae_0}} \\
  & \cong F_{\SerreFunctor^{-1}q}( \Bk ) && \mbox{\cite[lem.\ 3.4]{HJ-TAMS}} \\
  & = Q( \SerreFunctor^{-1}q,- ) \Tensor{\Bk} \Bk && \mbox{Equation \eqref{equ:EFG}} \\
  & \cong Q( \SerreFunctor^{-1}q,- ). && 
\end{align*}

(iii):  Observe that we have
\begin{align*}
  \Hom_Q\!\big( S\langle p \rangle,Q( r,- ) \big) & \cong \Hom_Q( S\langle p \rangle,F_r\Bk ) && \mbox{Equation \eqref{equ:EFG}} \\
  & \cong \Hom_Q( S\langle p \rangle,G_{ \SerreFunctor r }\Bk ) && \mbox{\cite[lem. 3.4]{HJ-TAMS}} \\
  & \cong \Hom_{ \Bk }( E_{ \SerreFunctor r }S\langle p \rangle,\Bk ) && \mbox{Adjointness} \\
  & \cong \Hom_{ \Bk }\!\big( S\langle p \rangle( \SerreFunctor r ),\Bk \big) && \mbox{Equation \eqref{equ:EFG}} \\
  & \cong  
  \left\{
    \begin{array}{cl}
      \Bk & \mbox{for $p=\SerreFunctor r$,} \\
      0   & \mbox{otherwise.}
    \end{array}
  \right. && \mbox{Equation \eqref{equ:S_formulae_0}}
\end{align*}
This implies that if $\SerreFunctor q \neq q$, then for each $p \in Q_0$, at least one of $\Hom_Q\!\big( S\langle p \rangle,Q( q,- ) \big)$ and $\Hom_Q\!\big( S\langle p \rangle,Q( \SerreFunctor^{ -1 }q ,- ) \big)$ is zero.  Hence Definition \ref{def:P_angles}(i) implies $\Hom_Q( S\langle p \rangle,\partial_0 ) = 0$.  This proves the first equation in Definition \ref{def:P_angles}(ii), and the second equation follows by a similar argument.
\end{proof}

We end the section with Lemma \ref{lem:pseudo-minimality}, which provides additional vanishing in the good case, and Lemma \ref{lem:dualising_P_angles}, which links Definitions \ref{def:P_angles} and \ref{def:P_brackets} by exploiting their symmetries.

\begin{Lemma}
\label{lem:pseudo-minimality}
\begin{enumerate}
\setlength\itemsep{4pt}

  \item  If $P\langle q \rangle$ is a good, complete projective resolution of $S\langle q \rangle$, then the first differential $\partial_1$ of $P\langle q \rangle$ satisfies
\[
  \Hom_Q( \partial_1,S\langle p \rangle ) = 0
\]
for each $p \in Q_0$.  

  \item  If $P\{ q \}$ is a good, complete projective resolution of $S\{ q \}$, then the first differential $\partial_1$ of $P\{ q \}$ satisfies
\[  
  \Hom_{ Q^{ \opp } }( \partial_1,S\{ p \} ) = 0
\]  
for each $p \in Q_0$.  

\end{enumerate}
\end{Lemma}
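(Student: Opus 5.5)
The plan is to show directly that every morphism $P\langle q \rangle_0 \xrightarrow{} S\langle p \rangle$ vanishes on the image of $\partial_1$. Part (ii) will then follow by applying part (i) to $Q^{ \opp }$: Definition \ref{def:P_brackets} is Definition \ref{def:P_angles} for $Q^{ \opp }$, and $Q^{ \opp }$ satisfies all the standing assumptions. Only Definition \ref{def:P_angles}(i) and the exactness built into a complete projective resolution will be used.

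First I will compute $\Hom_Q( P\langle q \rangle_0,S\langle p \rangle )$. By Definition \ref{def:P_angles}(i), $P\langle q \rangle_0 \cong Q( q,- )$. Yoneda's Lemma and Equation \eqref{equ:S_formulae_0} then give
\[
  \Hom_Q\!\big( Q( q,- ),S\langle p \rangle \big) \cong S\langle p \rangle( q ),
\]
which is $0$ for $p \neq q$ and $\Bk$ for $p = q$. For $p \neq q$ this gives $\Hom_Q( \partial_1,S\langle p \rangle ) = 0$ at once, since already $\Hom_Q( P\langle q \rangle_0,S\langle p \rangle ) = 0$.

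For $p = q$, the module $\Hom_Q( P\langle q \rangle_0,S\langle q \rangle )$ is free of rank one over $\Bk$. Under the Yoneda isomorphism, a morphism $f$ corresponds to the element $f_q( \id_q ) \in S\langle q \rangle( q ) = \Bk$, with $\id_q$ transported along $P\langle q \rangle_0 \cong Q( q,- )$. The augmentation $\varepsilon : P\langle q \rangle_0 \xrightarrow{} S\langle q \rangle$ from Definition \ref{def:Gorenstein_projective_objects} is surjective. Evaluating at $q$, the element $\mu = \varepsilon_q( \id_q )$ must generate $S\langle q \rangle( q ) = \Bk$, because $Q( q,- )$ is generated by $\id_q$ and so $\varepsilon_q$ has image $\Bk \mu$. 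Hence $\mu$ is a unit, and $\varepsilon$ is a $\Bk$-basis of $\Hom_Q( P\langle q \rangle_0,S\langle q \rangle )$. So every $f$ has the form $f = \lambda \varepsilon$ with $\lambda \in \Bk$.

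Since $P\langle q \rangle$ is a resolution in nonnegative degrees, $\varepsilon \circ \partial_1 = 0$, because $\operatorname{Im} \partial_1 = \Ker \varepsilon$. Therefore $f \circ \partial_1 = \lambda\, \varepsilon \circ \partial_1 = 0$ for all $f$, that is, $\Hom_Q( \partial_1,S\langle q \rangle ) = 0$. The only step needing care is showing that $\varepsilon$ generates the rank-one Hom module, and this reduces to the unit argument above. I do not expect a genuine obstacle; the hypothesis $\SerreFunctor q \neq q$ is not needed here, only the shape of $P\langle q \rangle_0$.
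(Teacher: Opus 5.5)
Your proof is correct. Its outer structure matches the paper's: reduce to part (i) by passing to $Q^{\opp}$, and use Yoneda to get $\Hom_Q(Q(q,-),S\langle p\rangle)=0$ for $p\neq q$. The case $p=q$ is handled differently, and more economically.

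The paper shows that every $\varepsilon'\colon Q(q,-)\to S\langle q\rangle$ factors through $\varepsilon$ by proving $\Ker\varepsilon\subseteq\Ker\varepsilon'$. Since $\fr(q,-)$ annihilates $S\langle q\rangle$, this reduces to the explicit computation $\Ker\varepsilon=\fr(q,-)$. For the nontrivial inclusion, the paper writes elements of $Q(q,q)$ as $\alpha\id_q+\rho$ using (Strong retraction) twice, and uses surjectivity of $\varepsilon_q$ to see that $\varepsilon_q(\id_q)$ is invertible.

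You use the same invertibility differently. Since $\Hom_Q(Q(q,-),S\langle q\rangle)\cong S\langle q\rangle(q)=\Bk$ is free of rank one, $\varepsilon$ is a basis. Every morphism is therefore $\lambda\varepsilon$, and so kills $\operatorname{Im}\partial_1=\Ker\varepsilon$ at once.

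One step deserves a half-line of justification: $\operatorname{Im}\varepsilon_q=\Bk\mu$. For $\varphi\in Q(q,q)$, naturality gives $\varepsilon_q(\varphi)=S\langle q\rangle(\varphi)(\mu)$. The map $S\langle q\rangle(\varphi)$ is a $\Bk$-linear endomorphism of $\Bk$, hence multiplication by a scalar.

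Your route uses (Strong retraction) only through the values of $S\langle q\rangle$ and avoids the kernel computation. The paper's route costs a little more but yields an extra fact: $\operatorname{Im}\partial_1=\fr(q,-)$. That is, in a good resolution the first syzygy of $S\langle q\rangle$ is the pseudoradical $\fr(q,-)$.
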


\begin{proof}
By symmetry, it is enough to prove part (i).  Since $P\langle q \rangle$ is a complete projective resolution of $S\langle q \rangle$, there is an exact sequence
\[
\vcenter{
\xymatrix @-2.5pc @! {
  P\langle q \rangle_1 \ar^-{ \partial_1 }[rr] && P\langle q \rangle_0 \ar@{->>}^-{ \varepsilon }[rr] && S\langle q \rangle.
                  }
        }
\]
By Definition \ref{def:P_angles}(i) this reads
\[
\vcenter{
\xymatrix @-2.5pc @! {
  P\langle q \rangle_1 \ar^-{ \partial_1 }[rr] && Q( q,- ) \ar@{->>}^-{\varepsilon}[rr] && S\langle q \rangle.
                  }
        }
\]
To prove part (i) of the lemma, it is enough to prove that the following extension problem can always be solved:
\[
\vcenter{
  \xymatrix @+0.5pc {
    Q( q,- ) \ar@{->>}^-{ \varepsilon }[r] \ar_{ \varepsilon' }[d] & S\langle q \rangle \ar@{.>}[dl] \\
    S\langle p \rangle. \\
                    }
        }
\]
To prove this, it is enough to show
\begin{equation}
\label{equ:lem:pseudo-minimality:20} 
  \Ker \varepsilon \subseteq \Ker \varepsilon'.
\end{equation}

Suppose $p \neq q$.  Then \eqref{equ:lem:pseudo-minimality:20} holds because $\varepsilon' = 0$ since Yoneda's Lemma and Equation \eqref{equ:S_formulae_0} give
\[
  \Hom_Q\!\big( Q( q,- ),S\langle p \rangle \big) \cong  S\langle p \rangle( q ) = 0.
\]

Suppose $p = q$.  Since $\fr( q,- )$ annihilates $S\langle p \rangle = S\langle q \rangle = Q( q,- )/\fr( q,- )$, we have $\fr( q,- ) \subseteq \Ker \varepsilon'$, so to show \eqref{equ:lem:pseudo-minimality:20} it is enough to show
\begin{equation}
\label{equ:lem:pseudo-minimality:30}
  \Ker \varepsilon = \fr( q,- ).
\end{equation}
The inclusion $\supseteq$ in Equation \eqref{equ:lem:pseudo-minimality:30} is clear because $\fr( q,- )$ annihilates $S\langle q \rangle$.  

The inclusion $\subseteq$ in Equation \eqref{equ:lem:pseudo-minimality:30}: Observe that $\varepsilon$ is a natural transformation and that if $r \neq q$ then $\Ker \varepsilon_r \subseteq \fr( q,r )$ because $\fr( q,r ) = Q( q,r )$.  

To complete the proof, we must show that we also have $\Ker \varepsilon_q \subseteq \fr( q,q )$.  Consider an element $\kappa \in \Ker \varepsilon_q$, and use condition (Strong retraction) in Section \ref{subsec:notation} to write
\begin{equation}
\label{equ:lem:pseudo-minimality:111}
  \kappa = \alpha\id_q + \rho
\end{equation}
with $\alpha \in \Bk$ and $\rho \in \fr( q,q )$ whence
\begin{equation}
\label{equ:lem:pseudo-minimality:131}
  0
  = \varepsilon_q( \kappa )
  = \varepsilon_q( \alpha\id_q + \rho )
  = \alpha\varepsilon_q( \id_q ).
\end{equation}
We used that $\varepsilon_q( \rho ) = 0$ by the inclusion $\supseteq$ in Equation \eqref{equ:lem:pseudo-minimality:30}.  Since $\varepsilon_q$ is surjective, there exists an element $\varphi \in Q( q,q )$ such that $\varepsilon_q( \varphi ) = \id_q + \fr( q,q )$.  Use condition (Strong retraction) in Section \ref{subsec:notation} again to write $\varphi = \beta\id_q + \sigma$ with $\beta \in \Bk$ and $\sigma \in \fr( q,q )$ whence
\begin{equation}
\label{equ:lem:pseudo-minimality:141}
  \beta\varepsilon_q( \id_q )
  = \varepsilon_q( \beta\id_q )
  = \varepsilon_q( \beta\id_q+\sigma )
  = \varepsilon_q( \varphi )
  = \id_q + \fr( q,q ).
\end{equation}
We used that $\varepsilon_q( \sigma ) = 0$ by the inclusion $\supseteq$ in Equation \eqref{equ:lem:pseudo-minimality:30} again.  Now consider the element $\alpha\beta\varepsilon_q( \id_q )$.  Equation \eqref{equ:lem:pseudo-minimality:131} shows that it is equal to zero, and Equation \eqref{equ:lem:pseudo-minimality:141} shows that it is equal to $\alpha\big(\! \id_q + \fr( q,q ) \big)$.  This implies $\alpha = 0$ whence Equation \eqref{equ:lem:pseudo-minimality:111} shows $\kappa = \rho$ so we have $\kappa \in \fr( q,q )$ as desired.
\end{proof}

\begin{Lemma}
\label{lem:dualising_P_angles}
Let $P\langle q \rangle$ be a complete projective resolution of $S\langle q \rangle$ consisting of objects of ${}_{ Q }\!\prj$.  Define
\[
  P\{ q \} = \Sigma^{ -1 }\Hom_{ \Bk }( P\langle q \rangle,\Bk ).
\]  
\begin{enumerate}
\setlength\itemsep{4pt}

  \item  $P\{ q \}$ is a complete projective resolution of $S\{ q \}$ consisting of objects of $\prj_Q$.

  \item  If $P\langle q \rangle$ is good, then $P\{ q \}$ is good.

\end{enumerate}
\end{Lemma}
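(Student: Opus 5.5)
Set $D = \Hom_{ \Bk }( ?,\Bk )$. The plan is to transport each defining property of $P\langle q \rangle$ through $D$. Lemma \ref{lem:algebraic_duality} makes $D$ a duality between ${}_{ Q }\!\prj$ and $\prj_Q$. Lemma \ref{lem:17a_17b_17c:1} shows that $P\langle q \rangle$ is acyclic and is contractible pointwise, with terms in ${}_{ \Bk }\!\prj$.

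\textbf{Step 1: terms and acyclicity.} By Lemma \ref{lem:algebraic_duality}(iii), $D P\langle q \rangle$, and hence $P\{ q \}$, consists of objects of $\prj_Q$. For each $p$, $(D P\langle q \rangle)( p ) = \Hom_{ \Bk }( P\langle q \rangle( p ),\Bk )$. This is the dual of a contractible complex (Lemma \ref{lem:17a_17b_17c:1}(ii)), so it is contractible, and therefore $P\{ q \}$ is acyclic.

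\textbf{Step 2: total acyclicity.} Take $L \in \Prj_Q$; it suffices to treat $L = Q( -,r ) \Tensor{\Bk} K$ with $K \in {}_{ \Bk }\!\Prj$, because $\Hom_{ Q^{ \opp } }( P\{ q \},- )$ commutes with the relevant coproducts termwise (compare Lemma \ref{lem:pre_5i_5ii_30_32}(i)) and with summands. Since $K$ is a summand of a free module, we may reduce to $L = Q( -,r )$. Using Lemma \ref{lem:16A_28_35_56}(v), or directly the duality, we get
\[
\Hom_{ Q^{ \opp } }( D P\langle q \rangle,Q( -,r ) ) \cong \Hom_Q( D Q( -,r ),P\langle q \rangle ) \cong \Hom_Q( Q( \SerreFunctor^{ -1 }r,- ),P\langle q \rangle ) \cong P\langle q \rangle( \SerreFunctor^{ -1 } r ).
\]
Here $D Q( -,r ) \cong Q( \SerreFunctor^{ -1 } r,- )$ by Lemma \ref{lem:16A_28_35_56}(i) applied to $Q^{ \opp }$. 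The right-hand side is acyclic by Lemma \ref{lem:17a_17b_17c:1}(ii). So $P\{ q \}$ is a totally acyclic complex of projectives.

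\textbf{Step 3: the cokernel.} Identify the cokernel of the degree-one differential of $P\{ q \}$, which is $D$ of $\partial_{ -1 }$ of $P\langle q \rangle$ up to sign. Exactness of $D$ on these complexes (they are pointwise split, since $\Ker$ and $\Coker$ are pointwise projective by contractibility) gives
\[
\Coker\big( D\partial^{P\langle q \rangle}_{ -1 } \big) \cong D\big( \Ker \partial^{P\langle q \rangle}_{ -1 } \big) = D\big( S\langle q \rangle \big) \cong S\{ q \}
\]
by Equation \eqref{equ:S_formulae_1}. This proves part (i). The main obstacle is getting this indexing under $\Sigma^{ -1 }$ right; it is precisely what forces the shift $\Sigma^{ -1 }$.

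\textbf{Part (ii).} After the shift, degree $0$ of $P\{ q \}$ is $D P\langle q \rangle_{ -1 } \cong D Q( \SerreFunctor^{ -1 } q,- ) \cong Q( -,q )$. Degree $-1$ is $D P\langle q \rangle_0 \cong D Q( q,- ) \cong Q( -,\SerreFunctor q )$, by Lemma \ref{lem:16A_28_35_56}(i). The new $\partial_0$ is $\pm D\partial_0^{P\langle q \rangle}$. We have
\[
\Hom_{ Q^{ \opp } }( D\partial_0,S\{ p \} ) \cong \Hom_Q( S\langle p \rangle,\partial_0 )
\]
using the duality of Lemma \ref{lem:algebraic_duality}(i) together with $D S\{ p \} \cong S\langle p \rangle$, and this vanishes. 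Symmetrically,
\[
\Hom_{ Q^{ \opp } }( S\{ p \},D\partial_0 ) \cong \Hom_Q( \partial_0,S\langle p \rangle ) = 0.
\]
Both identities are instances of Lemma \ref{lem:16A_28_35_56}(iii) with $M = \Bk$.
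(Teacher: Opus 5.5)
Your proof is correct and is essentially the paper's argument: the terms, the acyclicity, and part (ii) are handled the same way. The only variations are local. For total acyclicity you reduce to representable $L = Q( -,r )$ and use the duality plus Yoneda, whereas the paper treats all $L \in \Prj_Q$ at once via Lemma \ref{lem:16A_28_35_56}(v), obtaining $\Sigma( L \circ \SerreFunctor ) \Tensor{Q} P\langle q \rangle$. To identify $S\{ q \}$, you compute $\Coker \partial_1$ using pointwise splitness, whereas the paper dualizes $P\langle q \rangle_1 \to P\langle q \rangle_0 \to S\langle q \rangle \to 0$, which needs only left exactness of $\Hom_{ \Bk }( -,\Bk )$ and gives $S\{ q \} = \Ker \partial_{ -1 }$.
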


\begin{proof}
(i): By definition of $P\{ q \}$ we have
\begin{equation}
\label{equ:lem:dualising_P_angles:10}
  P\{ q \}_i = \Hom_{ \Bk }( P\langle q \rangle_{ -i-1 },\Bk )
\end{equation}
for each $i$.  Since $P\langle q \rangle$ consists of objects of ${}_{ Q }\!\prj$, Lemma \ref{lem:algebraic_duality}(iii) gives that $P\{ q \}$ consists of objects of $\prj_Q$.

We now show that $P\{ q \}$ satisfies the three bullets in Definition \ref{def:Gorenstein_projective_objects}:

Each $P\{ q \}_i$ is a projective object of $\Mod_Q$ since it is in $\prj_Q$, see Definition \ref{def:2}.

Lemma \ref{lem:17a_17b_17c:1}(ii) implies that $P\langle q \rangle( p )$ is a contractible chain complex over ${}_{ \Bk }\!\Mod$ for each $p \in Q_0$.  Hence $\Hom_{ \Bk }\!\big( P\langle q \rangle( p ),\Bk \big)$ is a contractible chain complex over ${}_{ \Bk }\!\Mod$, so in particular an acyclic chain complex.  But we have $\Hom_{ \Bk }\!\big( P\langle q \rangle( p ),\Bk \big) = \Hom_{ \Bk }( P\langle q \rangle,\Bk )( p )$, and since this is an acyclic chain complex over ${}_{ \Bk }\!\Mod$ for each $p \in Q_0$, it follows that $\Hom_{ \Bk }( P\langle q \rangle,\Bk )$ is acyclic whence so is $P\{ q \}$.

For $L \in \Prj_Q$ we can compute as follows:
\begin{align*}
  \Hom_{ Q^{ \opp } }\!\big( P\{ q \},L \big)
  & = \Hom_{ Q^{ \opp } }\!\big( \Sigma^{ -1 }\Hom_{ \Bk }( P\langle q \rangle,\Bk ) ,L \big) && \\
  & \cong \Sigma\Hom_{ Q^{ \opp } }\!\big( \Hom_{ \Bk }( P\langle q \rangle,\Bk ) ,L \big) && \\
  & \cong \Sigma( L \circ \SerreFunctor ) \Tensor{Q} P\langle q \rangle. && \mbox{Lemma \ref{lem:16A_28_35_56}(v)}
\end{align*}
This is acyclic since $P\langle q \rangle$ is acyclic while $L \circ \SerreFunctor$ is in $\Prj_Q$ because $L$ is in $\Prj_Q$ and $\SerreFunctor$ is an autoequivalence of $Q$.

Finally, we show that $P\{ q \}$ resolves $S\{ q \}$:

Since $P\langle q \rangle$ is a complete projective resolution of $S\langle q \rangle$, there is an exact sequence
\[
\vcenter{
\xymatrix @-1.5pc @! {
  P\langle q \rangle_1 \ar^{ \partial^{ P\langle q \rangle}_1 }[rr] && P\langle q \rangle_0 \ar@{->>}^{ \varepsilon }[rr] && S\langle q \rangle.
                  }
        }
\]
Applying the functor $\Hom_{ \Bk }( -,\Bk )$ gives an exact sequence
\[
\vcenter{
\xymatrix @-3.3pc @! {
\Hom_{ \Bk }( S\langle q \rangle,\Bk ) \ar@{^(->}^{ \eta }[rr] && \Hom_{ \Bk }( P\langle q \rangle_0,\Bk ) \ar^{ \Hom_{ \Bk }( \partial^{ P\langle q \rangle}_1,\Bk ) }[rr] && \Hom_{ \Bk }( P\langle q \rangle_1,\Bk ).
                  }
        }
\]
But using Equations \eqref{equ:S_formulae_1} and \eqref{equ:lem:dualising_P_angles:10} this sequence can also be written
\[
\vcenter{
\xymatrix @-2.3pc @! {
S\{ q \} \ar@{^(->}^{ \eta }[rr] && P\{ q \}_{ -1 }\ar^{ \partial^{ P\langle q \rangle}_{ -1 } }[rr] && P\{ q \}_{ -2 },
                  }
        }
\]
so $P\{ q \}$ does indeed resolve $S\{ q \}$.

(ii): Suppose that $P\langle q \rangle$ satisfies Definition \ref{def:P_angles}(i).  Then Equation \eqref{equ:lem:dualising_P_angles:10} and Serre duality imply that $P\{ q \}$ satisfies Definition \ref{def:P_brackets}(i) as follows:
\begin{align*}
  P\{ q \}_0
  & = \Hom_{ \Bk }( P\langle q \rangle_{ -1 },\Bk ) 
  \cong \Hom_{ \Bk }\!\big( Q( \SerreFunctor^{ -1 }q,- ),\Bk \big)
  \cong Q( -,\SerreFunctor\SerreFunctor^{ -1 }q )
  \cong Q( -,q ), \\
  P\{ q \}_{ -1 }
  & = \Hom_{ \Bk }( P\langle q \rangle_0,\Bk ) 
  \cong \Hom_{ \Bk }\!\big( Q( q,- ),\Bk \big)
  \cong Q( -,\SerreFunctor q ).
\end{align*}

Suppose that $P\langle q \rangle$ satisfies Definition \ref{def:P_angles}(ii).  To prove that $P\{ q \}$ satisfies Definition \ref{def:P_brackets}(ii), we must show that if $p \in Q_0$ then each composition
\begin{equation}
\label{equ:lem:dualising_P_angles:100}
  S\{ p \} \xrightarrow{ f } P\{ q \}_0 \xrightarrow{ \partial^{ P\{ q \} }_0 } P\{ q \}_{ -1 }
\end{equation}
is zero and each composition
\begin{equation}
\label{equ:lem:dualising_P_angles:110}
  P\{ q \}_0 \xrightarrow{ \partial^{ P\{ q \} }_0 } P\{ q \}_{ -1 } \xrightarrow{ g } S\{ p \} 
\end{equation}
is zero.  Equation \eqref{equ:lem:dualising_P_angles:10} implies that \eqref{equ:lem:dualising_P_angles:100} is
\[
  S\{ p \} \xrightarrow{ f } \Hom_{ \Bk }( P\langle q \rangle_{ -1 },\Bk ) \xrightarrow{ \Hom_{ \Bk }( \partial^{ P\langle q \rangle }_0,\Bk ) } \Hom_{ \Bk }( P\langle q \rangle_0,\Bk ).
\]
Applying $\Hom_{ \Bk }( -,\Bk )$ to this gives
\[
  P\langle q \rangle_0 \xrightarrow{ \partial^{ P\langle q \rangle }_0 } P\langle q \rangle_{ -1 } \xrightarrow{ \Hom_{ \Bk }( f,\Bk ) } \Hom_{ \Bk }( S\{ p \},\Bk )
\]
up to isomorphism by Lemma \ref{lem:algebraic_duality}(i), which also implies that it is enough to show that this last composition is zero.  But that holds by Definition \ref{def:P_angles}(ii) because we have $\Hom_{ \Bk }( S\{ p \},\Bk ) \cong S\langle p \rangle$ by Equation \eqref{equ:S_formulae_1}.  Equation \eqref{equ:lem:dualising_P_angles:110} is handled similarly.
\end{proof}

\section{The functors $\LFunctorHomAngles{q}$}
\label{sec:L}

This section defines the functors $\LFunctorHomAngles{q}$ and establishes some of their properties.

\begin{Setup}
\label{set:P}
For each $q \in Q_0$ we pick, and keep in place for the rest of the paper, a chain complex $P\langle q \rangle$ which is a complete projective resolution of $S\langle q \rangle$ consisting of objects of ${}_{ Q }\!\prj$.  For instance, we could let $P\langle q \rangle$ be the resolution from Definition \ref{def:3}; see Lemma \ref{lem:3}(i).

We set
\[
  P\{ q \} = \Sigma^{ -1 }\Hom_{ \Bk }( P\langle q \rangle,\Bk ),
\]
and observe that $P\{ q \}$ is a complete projective resolution of $S\{ q \}$ consisting of objects of $\prj_Q$ by Lemma \ref{lem:dualising_P_angles}(i).
\end{Setup}

\begin{Remark}
\label{rmk:P}
We do not require $P\langle q \rangle$ to be the resolution from Definition \ref{def:3}, but if it were and if $q \in Q_0$ satisfied $\SerreFunctor q \neq q$, then $P\langle q \rangle$ would be good by Lemma \ref{lem:3}(iii), and $P\{ q \}$ would be good by Lemma \ref{lem:dualising_P_angles}(ii).
\end{Remark}

The following definition is due to \cite[sec.\ 2.4.3]{Jasso-Q-shaped}.

\begin{Definition}
\label{def:L}
Let $q \in Q_0$ be given.  We define a functor $\LFunctorHomAngles{q} : {}_{ Q }\!\Mod \xrightarrow{} \Ch( {}_{ \Bk }\!\Mod )$ by
\begin{equation}
\label{equ:def:L:10}
  \LFunctorHomAngles{q}( - ) = \Hom_Q( P\langle q \rangle,- )
\end{equation}
and a functor $\LFunctorHomBrackets{q} : \Mod_Q \xrightarrow{} \Ch( {}_{ \Bk }\!\Mod )$ by
\[
  \LFunctorHomBrackets{q}( - ) = \Hom_{ Q^{ \opp } }( P\{ q \},- ).
\]

The definitions are compatible with additional structures.  For instance, we will mainly view $\LFunctorHomAngles{q}$ and $\LFunctorHomBrackets{q}$ as the following functors:
\begin{gather*}
  \LFunctorHomAngles{q} : {}_{ Q,A }\!\Mod \xrightarrow{} \Ch( {}_{ A }\!\Mod ), \\
  \LFunctorHomBrackets{q} : \Mod_{ Q,A } \xrightarrow{} \Ch( \Mod_A ).
\end{gather*}
\end{Definition}

\begin{Remark}
\label{rmk:4}
For $q \in Q_0$ and each integer $j \geqslant 1$ we have
\[
  \H^j\LFunctorHomAngles{q}( - ) 
  \cong \Ext_Q^j( S\langle q \rangle, - )
  = \BH_{ [q] }^j( - ),
\]
where $\BH_{ [q] }^j$ is the generalised cohomology functor on ${}_{ Q,A }\!\Mod$ introduced in \cite[def.\ 7.11]{HJ-JLMS}.  The isomorphism holds because the left hand part of $P\langle q \rangle$ is a projective resolution of $S\langle q \rangle$, and the equality holds by definition.

On the other hand, if $j \leqslant 0$ then $\H^j\LFunctorHomAngles{q}( - )$ has no counterpart in \cite{HJ-JLMS}.
\end{Remark}

\begin{Lemma}
\label{lem:5i_5ii_30_32}
Let $q \in Q_0$ be given.
\begin{enumerate}
\setlength\itemsep{4pt}

  \item  The functor $\LFunctorHomAngles{q}$ is exact and respects set indexed products and coproducts.

  \item  Let $\cA \subseteq {}_{ A }\!\Mod$ be an additive subcategory and let $X \in {}_{ Q,A }\!\Mod$ be given.  Then the following implication holds:
\[
  X( p ) \in \cA \mbox{ for each } p \in Q_0 \Rightarrow \LFunctorHomAngles{q}X \mbox{ is a chain complex over $\cA\!$.}
\]

\end{enumerate}
\end{Lemma}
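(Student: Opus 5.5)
The plan is to reduce everything to Lemma \ref{lem:pre_5i_5ii_30_32}, applied degreewise to the chain complex $P\langle q \rangle$. By Setup \ref{set:P}, each $P\langle q \rangle_i$ lies in ${}_{ Q }\!\prj$. By Definition \ref{def:L}, the degree $-i$ component of $\LFunctorHomAngles{q}X$ is $\Hom_Q( P\langle q \rangle_i,X )$, with differentials induced by those of $P\langle q \rangle$. So every claim about $\LFunctorHomAngles{q}$ can be checked one degree at a time, provided the degreewise statements are natural in $X$ and therefore compatible with the differentials.

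For part (i), exactness comes first. A sequence in $\Ch( {}_{ A }\!\Mod )$ is exact if and only if it is exact in each degree. In degree $-i$ we are applying $\Hom_Q( P\langle q \rangle_i,- )$, which is exact because $P\langle q \rangle_i$ is projective in ${}_{ Q }\!\Mod$. Exactness can be checked after forgetting the $A$-structure, since $\Hom_Q( P,X )$ carries its $A$-structure compatibly (Remark \ref{rmk:16A_28_35_56}). Products and coproducts in $\Ch( {}_{ A }\!\Mod )$ are also computed degreewise, and Lemma \ref{lem:pre_5i_5ii_30_32}(i) says each $\Hom_Q( P\langle q \rangle_i,- )$ respects set indexed products and coproducts. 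That lemma is stated over ${}_{ Q }\!\Mod$, but its proof gives the natural isomorphism $\coprod_{ p \in V }\Hom_{ \Bk }\big( K_p,(-)(p) \big)$. This isomorphism is $A$-linear, and products and coproducts in ${}_{ Q,A }\!\Mod$ are computed objectwise, so it transfers. Naturality of the canonical comparison maps in $X$ ensures that the degreewise isomorphisms commute with the differentials, which gives the isomorphisms of complexes.

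For part (ii), we apply Lemma \ref{lem:pre_5i_5ii_30_32}(ii) to $P = P\langle q \rangle_i$ for each $i$. If $X(p) \in \cA$ for every $p \in Q_0$, then each component $\Hom_Q( P\langle q \rangle_i,X )$ lies in $\cA$, so $\LFunctorHomAngles{q}X$ is a chain complex over $\cA$.

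I expect no real obstacle. The only point needing care is bookkeeping: checking that the coproduct comparison map is natural, so that it assembles into a morphism of chain complexes, and that the $A$-module structures match. Both follow from the explicit Yoneda and adjunction isomorphisms in the proof of Lemma \ref{lem:pre_5i_5ii_30_32}.
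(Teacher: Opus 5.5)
Your proposal is correct and follows essentially the same route as the paper. Exactness comes from $P\langle q \rangle$ being a complex of projectives, and both the product/coproduct claim and part (ii) follow by applying Lemma \ref{lem:pre_5i_5ii_30_32} degreewise to the terms of $P\langle q \rangle$, which lie in ${}_{ Q }\!\prj$; your extra bookkeeping about the $A$-structure and the naturality of the comparison maps only makes explicit what the paper leaves implicit.
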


\begin{proof}
The functor $\LFunctorHomAngles{q}$ is exact because $P \langle q \rangle$ is a chain complex over ${}_{ Q }\!\Prj$.  Moreover, $P\langle q \rangle$ consists of objects in ${}_{ Q }\!\prj$ by assumption, so Lemma \ref{lem:pre_5i_5ii_30_32}(i) implies that $\LFunctorHomAngles{q}$ respects set indexed products and coproducts.  This proves part (i) of the lemma, and part (ii) follows from Lemma \ref{lem:pre_5i_5ii_30_32}(ii).
\end{proof}

\begin{Lemma}
\label{lem:17A_29_54_57_38A_40A}
Let $q \in Q_0$ be given.  

There are the following isomorphisms in $\Ch( {}_{ \Bk }\!\Mod )$, natural with respect to $M \in {}_{ \Bk }\!\Mod$ and $X \in {}_{ Q }\!\Mod$:
\begin{enumerate}
\setlength\itemsep{4pt}

  \item  $M \Tensor{\Bk} \LFunctorHomAngles{q} X \cong \LFunctorHomAngles{q}( M \Tensor{\Bk} X )$.

  \item  $\Hom_{ \Bk }( M,\LFunctorHomAngles{q}X ) \cong \LFunctorHomAngles{q}\Hom_{ \Bk }( M,X )$.

  \item  $\Hom_{ \Bk }( \LFunctorHomAngles{q}X,M ) \cong
  \Sigma^{ -1 }\LFunctorHomBrackets{q}\big( \Hom_{ \Bk }( X,M ) \circ \SerreFunctor^{ -1 } \big)$.

\end{enumerate}
There is the following isomorphism in $\Ch( {}_{ \Bk }\!\Mod )$, natural with respect to $N \in {}_{ \Bk }\!\Mod$ and $Y \in \Mod_Q$:
\begin{enumerate}
\setlength\itemsep{4pt}
\setcounter{enumi}{3}

  \item  $\Hom_{ \Bk }( \LFunctorHomBrackets{q}Y,N ) \cong \Sigma^{ -1 }\LFunctorHomAngles{q}\big( \Hom_{ \Bk }( Y,N ) \circ \SerreFunctor \big)$.

\end{enumerate}
\end{Lemma}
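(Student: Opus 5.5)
The plan is to prove each isomorphism degreewise, using the fact that $P\langle q \rangle$ consists of objects of ${}_{ Q }\!\prj$. We then check that the degreewise isomorphisms are natural in the object of $P\langle q \rangle$. Naturality in that variable makes them compatible with the differentials of $P\langle q \rangle$, so they assemble to isomorphisms of chain complexes. Signs and degree indexing are handled by the usual conventions for $\Hom$ of complexes, and the shift $\Sigma^{ -1 }$ appears for the same reason as in the definition of $P\{ q \}$.

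For (i) and (ii), apply Lemma \ref{lem:16A_28_35_56}(ii) with $F = M \Tensor{\Bk} -$ and $F = \Hom_{ \Bk }( M,- )$ respectively. Both are $\Bk$-linear endofunctors of ${}_{ \Bk }\!\Mod$. For $P \in {}_{ Q }\!\prj$ this gives
\[
  M \Tensor{\Bk} \Hom_Q( P,X ) \cong \Hom_Q( P,M \Tensor{\Bk} X )
\]
and
\[
  \Hom_{ \Bk }\!\big( M,\Hom_Q( P,X ) \big) \cong \Hom_Q\!\big( P,\Hom_{ \Bk }( M,X ) \big).
\]
These are natural in $P$, $M$, and $X$. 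Taking $P = P\langle q \rangle_i$ degreewise yields (i) and (ii). Alternatively, (ii) follows directly from ordinary Hom-tensor adjointness together with $\Hom_Q( P\otimes M,X )$.

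For (iii), take $P = P\langle q \rangle_i$ in Lemma \ref{lem:16A_28_35_56}(vi):
\[
  \Hom_{ \Bk }\!\big( \Hom_Q( P\langle q \rangle_i,X ),M \big) \cong \Hom_{ Q^{ \opp } }\!\big( \Hom_{ \Bk }( P\langle q \rangle_i,\Bk ),\Hom_{ \Bk }( X,M ) \circ \SerreFunctor^{ -1 } \big).
\]
By Setup \ref{set:P}, $\Hom_{ \Bk }( P\langle q \rangle,\Bk ) = \Sigma P\{ q \}$. Hence the right-hand side, assembled over all degrees, is
\[
  \Hom_{ Q^{ \opp } }\!\big( \Sigma P\{ q \},- \big) \cong \Sigma^{ -1 }\Hom_{ Q^{ \opp } }( P\{ q \},- ) = \Sigma^{ -1 }\LFunctorHomBrackets{q}( - ),
\]
evaluated at $\Hom_{ \Bk }( X,M ) \circ \SerreFunctor^{ -1 }$. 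This is the claimed isomorphism. The degree bookkeeping needs care: $\Hom_{ \Bk }( \LFunctorHomAngles{q}X,M )$ in degree $i$ involves $P\langle q \rangle_{ -i }$, so we should check that both sides pick up the same shift. The same computation with $\Hom_Q( \Sigma C,- ) = \Sigma^{ -1 }\Hom_Q( C,- )$ already appears in the proof of Lemma \ref{lem:dualising_P_angles}(i).

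Part (iv) is the $Q^{ \opp }$-version of (iii), whose Serre functor is $\SerreFunctor^{ -1 }$. The dual versions of Lemma \ref{lem:16A_28_35_56}(iv)–(vi) give, for $P' \in \prj_Q$,
\[
  \Hom_{ \Bk }\!\big( \Hom_{ Q^{ \opp } }( P',Y ),N \big) \cong \Hom_Q\!\big( \Hom_{ \Bk }( P',\Bk ),\Hom_{ \Bk }( Y,N ) \circ \SerreFunctor \big).
\]
We apply this with $P' = P\{ q \}_i$. The remaining point is that
\[
  \Hom_{ \Bk }( P\{ q \},\Bk ) = \Hom_{ \Bk }\!\big( \Sigma^{ -1 }\Hom_{ \Bk }( P\langle q \rangle,\Bk ),\Bk \big) \cong \Sigma P\langle q \rangle.
\]
This holds by the duality of Lemma \ref{lem:algebraic_duality}(iii), as the double dual on ${}_{ Q }\!\prj$ is naturally the identity. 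This yields $\Sigma^{ -1 }\LFunctorHomAngles{q}\big( \Hom_{ \Bk }( Y,N ) \circ \SerreFunctor \big)$.

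The only delicate step is the sign and shift bookkeeping in (iii) and (iv), together with the compatibility of the degreewise isomorphisms with the differentials. Naturality in $P$ in Lemma \ref{lem:16A_28_35_56} handles the latter.
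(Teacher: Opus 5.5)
Your proposal is correct and follows the paper's proof. Parts (i) and (ii) come from Lemma~\ref{lem:16A_28_35_56}(ii) with $F = M \otimes_{\Bk} -$ and $F = \Hom_{\Bk}(M,-)$, part (iii) from Lemma~\ref{lem:16A_28_35_56}(vi) combined with $\Hom_{\Bk}(P\langle q\rangle,\Bk) = \Sigma P\{q\}$, and part (iv) from the dual argument, where your identification $\Hom_{\Bk}(P\{q\},\Bk) \cong \Sigma P\langle q\rangle$ via the duality of Lemma~\ref{lem:algebraic_duality} spells out what the paper leaves as ``proved similarly''.
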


\begin{proof}
(i): Consider the $\Bk$-linear functor $F: {}_{ \Bk }\!\Mod \xrightarrow{} {}_{ \Bk }\!\Mod$ given by $F( - ) = M \Tensor{\Bk} -$.  We can compute as follows:
\[
  M \Tensor{\Bk} \LFunctorHomAngles{q}X
  = F\big( \Hom_Q( P\langle q \rangle,X ) \big)
  \cong \Hom_Q( P\langle q \rangle,F \circ X )
  = \LFunctorHomAngles{q}( M \Tensor{\Bk} X ).
\]
The equalities hold by definition and the isomorphism by Lemma \ref{lem:16A_28_35_56}(ii).

(ii):  This is proved like part (i) but replacing $F$ with $F( - ) = \Hom_{ \Bk }( M,- )$.

(iii): We can compute as follows:
\begin{align*}
  \Hom_{ \Bk }( \LFunctorHomAngles{q}X,M ) 
  & = \Hom_{ \Bk }\!\big( \Hom_Q( P\langle q \rangle,X),M \big) && \mbox{Definition \ref{def:L}} \\
  & \cong \Hom_{ Q^{ \opp } }\!\big( \Hom_{ \Bk }( P\langle q \rangle,\Bk ),\Hom_{ \Bk }( X,M ) \circ \SerreFunctor^{ -1 } \big) && \mbox{Lemma \ref{lem:16A_28_35_56}(vi)} \\
  & = \Hom_{ Q^{ \opp } }\!\big( \Sigma P\{ q \},\Hom_{ \Bk }( X,M ) \circ \SerreFunctor^{ -1 } \big) && \mbox{Setup \ref{set:P}} \\
  & \cong \Sigma^{ -1 }\Hom_{ Q^{ \opp } }\!\big( P\{ q \},\Hom_{ \Bk }( X,M ) \circ \SerreFunctor^{ -1 } \big)  \\
  & = \Sigma^{ -1 }\LFunctorHomBrackets{q}\big( \Hom_{ \Bk }( X,M ) \circ \SerreFunctor^{ -1 } \big). && \mbox{Definition \ref{def:L}}
\end{align*}

(iv): Proved similarly to part (iii).
\end{proof}

\begin{Remark}
\label{rmk:17A_29_54_57_38A_40A}
Lemma \ref{lem:17A_29_54_57_38A_40A} is compatible with additional structures.  For instance, the isomorphism in part (i) also exists in the version
\[
  M \Tensor{A} \LFunctorHomAngles{q} X \cong \LFunctorHomAngles{q}( M \Tensor{A} X ),
\]
natural with respect to $M \in \Mod_A$ and $X \in {}_{ Q,A }\!\Mod$.
\end{Remark}

\begin{Lemma}
\label{lem:17d}
Let $p,q \in Q_0$ be given.  
\begin{enumerate}
\setlength\itemsep{4pt}

  \item  If $L \in {}_{ A }\!\Prj$ is given, then $\LFunctorHomAngles{q}S_pL$ is a semiprojective chain complex over ${}_{ A }\!\Mod$.
  
  \item  If $J \in {}_{ A }\!\Inj$ is given, then $\LFunctorHomAngles{q}S_pJ$ is a semiinjective chain complex over ${}_{ A }\!\Mod$.

\end{enumerate}
\end{Lemma}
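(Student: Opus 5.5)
This is an unwinding of definitions followed by an appeal to Lemma \ref{lem:17a_17b_17c:2}. By Definition \ref{def:L}, viewed with the additional $A$-structure, we have
\[
  \LFunctorHomAngles{q}S_pL = \Hom_Q( P\langle q \rangle,S_pL )
  \quad\mbox{and}\quad
  \LFunctorHomAngles{q}S_pJ = \Hom_Q( P\langle q \rangle,S_pJ ),
\]
and these identifications are compatible with the $A$-module structures. By Setup \ref{set:P}, the chain complex $P\langle q \rangle$ is a complete projective resolution of $S\langle q \rangle$ consisting of objects of ${}_{ Q }\!\prj$. In particular it is a chain complex over ${}_{ Q }\!\prj$, which is exactly the hypothesis on $P$ in Lemma \ref{lem:17a_17b_17c:2}.

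For part (i) we apply Lemma \ref{lem:17a_17b_17c:2}(i) with $P = P\langle q \rangle$ and the given $L \in {}_{ A }\!\Prj$. It says that $\Hom_Q( P\langle q \rangle,S_pL )$ is a semiprojective chain complex over ${}_{ A }\!\Mod$, which is the claim.

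For part (ii) we apply Lemma \ref{lem:17a_17b_17c:2}(ii) in the same way with $J \in {}_{ A }\!\Inj$. It gives that $\Hom_Q( P\langle q \rangle,S_pJ )$ is semiinjective over ${}_{ A }\!\Mod$.

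There is no real obstacle. The only point to check is that the $A$-module structure on $\Hom_Q( P\langle q \rangle,S_pL )$ used in Definition \ref{def:L} is the same one used in Lemma \ref{lem:17a_17b_17c:2}. In both places it is induced by functoriality from the $A$-structure on $S_pL$, respectively $S_pJ$, so they agree.
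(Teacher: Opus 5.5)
Your proposal is correct and is essentially the paper's own proof. The paper also writes $\LFunctorHomAngles{q}S_p(-) = \Hom_Q\big(P\langle q\rangle, S_p(-)\big)$, notes that $P\langle q\rangle$ is a chain complex over ${}_{ Q }\!\prj$ by Setup \ref{set:P}, and concludes directly from Lemma \ref{lem:17a_17b_17c:2}.
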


\begin{proof}
We have $\LFunctorHomAngles{q}S_p( - ) = \Hom_Q\!\big( P\langle q \rangle,S_p( - ) \big)$, and $P\langle q \rangle$ is a chain complex over ${}_{ Q }\!\prj$ by Setup \ref{set:P}.  Hence the lemma follows from Lemma \ref{lem:17a_17b_17c:2}.
\end{proof}

\begin{Lemma}
\label{lem:9A_13A}
Let $q \in Q_0$ and $M \in {}_{ A }\!\Mod$ be given.  Suppose that $P\langle q \rangle$ is good in the sense of Definition \ref{def:P_angles}.  Then the chain complex $\LFunctorHomAngles{q}S_qM$ over ${}_{ A }\!\Mod$ has $M$, viewed as a chain complex concentrated in degree zero, as a direct summand.
\end{Lemma}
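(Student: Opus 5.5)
The plan is to compute $\LFunctorHomAngles{q}S_qM = \Hom_Q( P\langle q \rangle,S_qM )$ degreewise near degree zero and show that its differentials into and out of degree zero vanish.
\begin{itemize}
\item The complex in degree $j$ is $\Hom_Q( P\langle q \rangle_{ -j },S_qM )$, with the cohomological convention used in Remark \ref{rmk:4}.
\item By Definition \ref{def:P_angles}(i), $P\langle q \rangle_0 \cong Q( q,- )$.
\item By Yoneda's Lemma, the degree zero term is therefore
\[
  \Hom_Q\!\big( Q( q,- ),S_qM \big) \cong ( S_qM )( q ) = M.
\]
\end{itemize}
Once both differentials adjacent to degree zero are shown to be zero, the complex splits as a direct sum: the part in degrees $> 0$, the term $M$ in degree zero, and the part in degrees $< 0$. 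This is a direct sum of chain complexes, so $M$ concentrated in degree zero is a direct summand.

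\emph{The differential out of degree zero.} This is $\Hom_Q( \partial_0,S_qM )$. Write
\[
  S_qM \cong \Hom_{ \Bk }( S\{ q \},M )
\]
by Equation \eqref{equ:S_formulae_2}, and use adjointness as in the proof of Lemma \ref{lem:17a_17b_17c:2}. This identifies the map with $\Hom_{ \Bk }( S\{ q \} \Tensor{Q} \partial_0,M )$. By Lemma \ref{lem:pre:3A}, $S\{ q \} \Tensor{Q} \partial_0 = 0$ if and only if $\Hom_Q( \partial_0,S\langle q \rangle ) = 0$. The latter holds by Definition \ref{def:P_angles}(ii) because $P\langle q \rangle$ is good. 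Hence the differential out of degree zero vanishes.

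\emph{The differential into degree zero.} This is $\Hom_Q( \partial_1,S_qM )$. The same adjunction turns it into $\Hom_{ \Bk }( S\{ q \} \Tensor{Q} \partial_1,M )$. Lemma \ref{lem:pre:3A} reduces its vanishing to $\Hom_Q( \partial_1,S\langle q \rangle ) = 0$, and this is exactly Lemma \ref{lem:pseudo-minimality}(i) for a good resolution.

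The main point needing care is that the adjunction isomorphisms are natural in the $P\langle q \rangle$ variable, so that they really carry the differential $\Hom_Q( \partial_j,S_qM )$ to $\Hom_{ \Bk }( S\{ q \} \Tensor{Q} \partial_j,M )$. This holds, since these are the natural isomorphisms already used in Lemma \ref{lem:17a_17b_17c:2}. The remaining steps are bookkeeping.
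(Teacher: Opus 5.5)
Your proposal is correct and is essentially the paper's proof: the paper also rewrites the differentials as $\Hom_{\Bk}(S\{q\}\Tensor{Q}\partial_j,M)$ and kills the two adjacent to degree zero using Definition \ref{def:P_angles}(ii), Lemma \ref{lem:pre:3A} and Lemma \ref{lem:pseudo-minimality}(i), and it identifies the degree-zero term with $M$ via adjunction and the tensor formula, where you use Yoneda directly. One harmless slip: $\Hom_Q(\partial_0,S_qM)$ is the differential into the degree-zero term and $\Hom_Q(\partial_1,S_qM)$ the one out of it, so your ``into''/``out of'' labels are swapped; likewise, in the convention of Remark \ref{rmk:4} degree $j$ is $\Hom_Q(P\langle q\rangle_j,-)$, not $\Hom_Q(P\langle q\rangle_{-j},-)$ as you state, but since both differentials vanish nothing changes.
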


\begin{proof}
Combining Equations \eqref{equ:S_formulae_2} and \eqref{equ:def:L:10} gives that $\LFunctorHomAngles{q}S_qM$ has differentials
\[
  \partial_{ \LFunctorHomAngles{q}S_qM }^i
  =
  \Hom_Q\!\big( \partial^{ P\langle q \rangle }_{ i+1 },\Hom_{ \Bk }( S\{ q \},M ) \big)
  \cong
  \Hom_{ \Bk }( S\{ q \} \Tensor{Q} \partial^{ P \langle q \rangle }_{ i+1 },M ) = (*),
\]
where the isomorphism is by adjointness.  Definition \ref{def:P_angles}(ii) and Lemmas \ref{lem:pre:3A} and \ref{lem:pseudo-minimality}(i) imply $(*) = 0$ for $i=-1$ and $i=0$.  Hence $\LFunctorHomAngles{q}S_qM$ has $( \LFunctorHomAngles{q}S_qM )^0$, viewed as a cochain complex concentrated in degree zero, as a direct summand.  However, $( \LFunctorHomAngles{q}S_qM )^0$ can be computed as follows:
\begin{align*}
  ( \LFunctorHomAngles{q}S_qM )^0
  & = \Hom_Q\!\big( P\langle q \rangle_0,\Hom_{ \Bk }( S\{ q \},M ) \big) && \mbox{Equations \eqref{equ:S_formulae_2} and \eqref{equ:def:L:10}} \\
  & \cong \Hom_{ \Bk }( S\{ q \} \Tensor{Q} P \langle q \rangle_0,M ) && \mbox{Adjointness} \\
  & \cong \Hom_{ \Bk }( S\{ q \} \Tensor{Q} Q( q,- ),M ) && \mbox{Definition \ref{def:P_angles}(i)} \\
  & \cong \Hom_{ \Bk }( S\{ q \}( q ),M ) && \mbox{\cite[p.\ 93]{Oberst-Roehrl}} \\
  & = \Hom_{ \Bk }( \Bk,M ) && \mbox{The dual of Equation \eqref{equ:S_formulae_0}} \\
  & \cong M. && \qedhere
\end{align*}
\end{proof}

\begin{Lemma}
\label{lem:5iii_a_70}
Let $p,q \in Q_0$ and $M \in {}_{ A }\!\Mod$ be given.  Then $\LFunctorHomAngles{q}F_pM$ is a contractible (also known as split exact) chain complex over $\add M$.
\end{Lemma}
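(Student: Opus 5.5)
We compute $\LFunctorHomAngles{q}F_pM$ directly from the definition, reducing it to an evaluation of $P\langle q \rangle$ at a single object, and then invoke Lemma \ref{lem:17a_17b_17c:1}(ii).

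First, by adjointness and Yoneda's Lemma, for any $P \in {}_{ Q }\!\prj$ there is an isomorphism, natural in $P$,
\[
  \Hom_Q\!\big( P,Q( p,- ) \Tensor{\Bk} M \big) \cong \Hom_Q\!\big( P,G_{ \SerreFunctor p }M \big) \cong \Hom_{ \Bk }\!\big( P( \SerreFunctor p ),M \big).
\]
The first isomorphism uses $F_pM \cong G_{ \SerreFunctor p }M$, which is the version with coefficients $M$ of \cite[lem.\ 3.4]{HJ-TAMS} already used in the proof of Lemma \ref{lem:3}. Alternatively, one can argue directly via Lemma \ref{lem:16A_28_35_56}(i),(iii) and Serre duality: $Q( p,- ) \Tensor{\Bk} M \cong \Hom_{ \Bk }\!\big( Q( -,\SerreFunctor p ),M \big)$ because $Q( p,r )$ is finitely generated projective. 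The second isomorphism is adjointness $E_{ \SerreFunctor p } \dashv G_{ \SerreFunctor p }$ from \eqref{equ:EFG}.

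Applying this degreewise to $P\langle q \rangle$ gives an isomorphism of chain complexes
\[
  \LFunctorHomAngles{q}F_pM \cong \Hom_{ \Bk }\!\big( P\langle q \rangle( \SerreFunctor p ),M \big).
\]
We must check naturality in $P$ so that the differentials match; this follows from the naturality statements in the cited adjunctions.

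Next we invoke Lemma \ref{lem:17a_17b_17c:1}. Part (ii) says that $P\langle q \rangle( \SerreFunctor p )$ is a contractible complex over ${}_{ \Bk }\!\Mod$, since $P\langle q \rangle$ is acyclic over ${}_{ Q }\!\prj$. Part (i) says its terms lie in ${}_{ \Bk }\!\prj$. The additive functor $\Hom_{ \Bk }( -,M )$ preserves contractibility, because it carries a contracting homotopy to a contracting homotopy. Hence $\LFunctorHomAngles{q}F_pM$ is contractible.

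Each term has the form $\Hom_{ \Bk }( K,M )$ with $K \in {}_{ \Bk }\!\prj$. Such a $K$ is a summand of $\Bk^n$, so $\Hom_{ \Bk }( K,M )$ is a summand of $M^n$ as an $A$-module, and therefore lies in $\add M$. All these isomorphisms must be $A$-linear, which holds by the compatibility with additional structures noted in Remark \ref{rmk:16A_28_35_56}. The $A$-structure on $\Hom_{ \Bk }( K,M )$ comes from $M$, and the contracting homotopy is $A$-linear because it is induced from a $\Bk$-linear homotopy on $P\langle q \rangle( \SerreFunctor p )$.

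The main obstacle is the first step: establishing the isomorphism $F_pM \cong G_{ \SerreFunctor p }M$ for arbitrary $M$, naturally and $A$-linearly, including the bookkeeping that $\Hom_Q( -,G_{ \SerreFunctor p }M )$ turns the chain complex $P\langle q \rangle$ into $\Hom_{ \Bk }( P\langle q \rangle( \SerreFunctor p ),M )$ with the correct differentials.
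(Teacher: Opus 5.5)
Your proposal is correct and follows essentially the same route as the paper. The paper also rewrites $F_pM \cong G_{\SerreFunctor p}M$, reduces $\LFunctorHomAngles{q}G_{\SerreFunctor p}M$ to $\Hom_{\Bk}\big(P\langle q\rangle(\SerreFunctor p),M\big)$ by adjointness, and then concludes from Lemma \ref{lem:17a_17b_17c:1}. Your explicit checks that $\Hom_{\Bk}(K,M)\in\add M$ for $K \in {}_{\Bk}\!\prj$, and that the contracting homotopy is $A$-linear, spell out what the paper leaves implicit.
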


\begin{proof}
Denoting $\SerreFunctor p$ by $r$ gives $F_pM \cong G_rM$ by \cite[lem.\ 3.4]{HJ-TAMS} whence $\LFunctorHomAngles{q}F_pM \cong \LFunctorHomAngles{q}G_rM$.  We can compute as follows:
\begin{align*}
  \LFunctorHomAngles{q}G_rM
  & = \Hom_Q\!\Big( P\langle q \rangle,\Hom_{ \Bk }\!\big( Q( -,r ),M \big) \Big) && \mbox{Equations \eqref{equ:EFG} and \eqref{equ:def:L:10}} \\
  & \cong \Hom_{ \Bk }\!\big( Q( -,r ) \Tensor{Q} P\langle q \rangle,M \big) && \mbox{Adjointness} \\
  & \cong \Hom_{ \Bk }\!\big( P\langle q \rangle( r ),M \big). && \mbox{\cite[p.\ 93]{Oberst-Roehrl}}
\end{align*}
This implies the lemma because $P\langle q \rangle( r )$ is a contractible chain complex over ${}_{ \Bk }\!\prj$ by Lemma \ref{lem:17a_17b_17c:1}.
\end{proof}

\begin{Lemma}
\label{lem:5iii_b}
Let $q \in Q_0$ be given.  
\begin{enumerate}
\setlength\itemsep{4pt}

  \item  If $L \in {}_{ Q,A }\!\Prj$ is given, then $\LFunctorHomAngles{q}L$ is a projective object of $\Ch( {}_{ A }\!\Mod )$.  

  \item  If $J \in {}_{ Q,A }\!\Inj$ is given, then $\LFunctorHomAngles{q}J$ is an injective object of $\Ch( {}_{ A }\!\Mod )$.  

\end{enumerate}
\end{Lemma}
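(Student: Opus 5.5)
The plan is to reduce to the two types of ``indecomposable'' projective and injective objects of ${}_{ Q,A }\!\Mod$, and then use Lemma \ref{lem:5iii_a_70} together with the facts that $\LFunctorHomAngles{q}$ respects products and coproducts.

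For part (i), recall from \cite{HJ-JLMS} that each object of ${}_{ Q,A }\!\Prj$ is a direct summand of a coproduct $\coprod_{ p } F_pL_p$ with $L_p \in {}_{ A }\!\Prj$. Indeed, $F_p$ is left adjoint to the exact functor $E_p$, so it preserves projectives. Moreover, $\coprod_{p} F_pE_pX \xrightarrow{} X$ is an epimorphism, and replacing $E_pX$ by a free cover gives the claim. By Lemma \ref{lem:5i_5ii_30_32}(i), $\LFunctorHomAngles{q}$ commutes with coproducts. Since projective objects of $\Ch( {}_{ A }\!\Mod )$ are closed under coproducts and direct summands, it therefore suffices to treat $L = F_pL'$ with $L' \in {}_{ A }\!\Prj$.

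Lemma \ref{lem:5iii_a_70} says $\LFunctorHomAngles{q}F_pL'$ is a contractible complex over $\add L'$, hence a contractible complex of projective $A$-modules. A contractible complex with projective components is a projective object of $\Ch( {}_{ A }\!\Mod )$. To see this, split it as a direct sum of disc complexes $\cdots 0 \xrightarrow{} P \xrightarrow{=} P \xrightarrow{} 0 \cdots$ with $P$ projective. Each such disc is projective because $\Hom_{\Ch}(\text{disc}(P), C) \cong \Hom_A(P, C_n)$, and this functor is exact in $C$. This proves (i).

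Part (ii) is dual. Every object of ${}_{ Q,A }\!\Inj$ is a direct summand of a product $\prod_p G_pJ_p$ with $J_p \in {}_{ A }\!\Inj$: here $G_p$ is right adjoint to the exact functor $E_p$, and $X \xrightarrow{} \prod_p G_pE_pX$ is a monomorphism, which we compose with injective envelopes of $E_pX$. By Lemma \ref{lem:5i_5ii_30_32}(i), $\LFunctorHomAngles{q}$ respects products. Also $G_pJ \cong F_{\SerreFunctor^{-1}p}J$ by \cite[lem.\ 3.4]{HJ-TAMS}, so Lemma \ref{lem:5iii_a_70} applies again and makes $\LFunctorHomAngles{q}G_pJ$ a contractible complex of injectives. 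Dually to the discs above, such a complex is a product of injective discs, hence injective in $\Ch( {}_{ A }\!\Mod )$. Products and summands of injectives are injective, which finishes the argument.

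The main point to double-check is the structural description of ${}_{ Q,A }\!\Prj$ and ${}_{ Q,A }\!\Inj$ as summands of $\coprod F_pL_p$ and $\prod G_pJ_p$. I expect it follows directly from the adjoint triple \eqref{equ:EFG} as sketched, without needing Definition \ref{def:2} over $A$. The remaining steps are routine.
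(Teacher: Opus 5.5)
Your proof is correct and follows the paper's route: you reduce via the fact that $\LFunctorHomAngles{q}$ respects products and coproducts to $F_pL'$ and to $G_pJ \cong F_{\SerreFunctor^{-1}p}J$, then apply Lemma \ref{lem:5iii_a_70} and use that contractible complexes of projectives (injectives) are projective (injective) in $\Ch({}_{ A }\!\Mod)$. The differences are cosmetic. The paper cites \cite[proof of thm.\ 7.29]{HJ-JLMS} for an actual isomorphism $J \cong \prod_p G_pJ_p$ and \cite[prop.\ 3.2]{Gillespie-degreewise} for the final step, whereas you derive a direct-summand description from the adjunctions and check the final step by hand using disc complexes.
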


\begin{proof}
(ii): By \cite[proof of thm.\ 7.29]{HJ-JLMS} there exist modules $J_p \in {}_{ A }\!\Inj$ for $p \in Q_0$ such that $J \cong \prod_{ p \in Q_0 } G_pJ_p$.  Since $\LFunctorHomAngles{q}$ respects set indexed products by Lemma \ref{lem:5i_5ii_30_32}(i), it is hence enough to show that $\LFunctorHomAngles{q}G_pJ_p$ is an injective object of $\Ch( {}_{ A }\!\Mod )$ for each $p \in Q_0$.  And we have $\LFunctorHomAngles{q}G_pJ_p \cong \LFunctorHomAngles{q}F_{ \SerreFunctor^{ -1 }p }J_p$ by \cite[lem.\ 3.4]{HJ-TAMS}, so it is enough to show that $\LFunctorHomAngles{q}F_{ \SerreFunctor^{ -1 }p }J_p$ is an injective object of $\Ch( {}_{ A }\!\Mod )$ for each $p \in Q_0$.

But $\LFunctorHomAngles{q}F_{ \SerreFunctor^{ -1 }p }J_p$ is a contractible chain complex over $\add J_p$ by Lemma \ref{lem:5iii_a_70}; in particular, it is a contractible chain complex over ${}_{ A }\!\Inj$.  Combining this with \cite[prop.\ 3.2]{Gillespie-degreewise} applied to the injective cotorsion pair $( {}_{ A }\!\Mod,{}_{ A }\!\Inj )$ in ${}_{ A }\!\Mod$ shows that as desired, $\LFunctorHomAngles{q}F_{ \SerreFunctor^{ -1 }p }J_p$ is an injective object of $\Ch( {}_{ A }\!\Mod )$.

(i): Proved similarly to part (ii), but without the need to invoke \cite[lem.\ 3.4]{HJ-TAMS}.
%
\end{proof}

\begin{Lemma}
\label{lem:5v}
Let $X \in {}_{ Q }\!\Mod$ be given.  The following conditions are equivalent:
\begin{enumerate}
\setlength\itemsep{4pt}

  \item  $X \in {}_{ Q }\cE$,
  
  \item  For each $q \in Q_0$, the chain complex $\LFunctorHomAngles{q}X$ is acyclic.

\end{enumerate}
\end{Lemma}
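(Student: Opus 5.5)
The plan is to describe $\H^j\LFunctorHomAngles{q}X$ for every $j \in \BZ$ as an $\Ext$ group, namely $\H^j\LFunctorHomAngles{q}X \cong \Ext^j_Q(\Omega^{j'}S\langle q\rangle, X)$ for a suitable syzygy or cosyzygy of $S\langle q\rangle$. We then combine this with the characterisation of ${}_{Q}\cE$ in Lemma \ref{lem:43}, in the special case $A=\Bk$, where $S_p\Bk = S\langle p\rangle$.

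\emph{Computing the homology.} For $j \geqslant 1$, Remark \ref{rmk:4} already gives $\H^j\LFunctorHomAngles{q}X \cong \Ext^j_Q(S\langle q\rangle,X)$. For arbitrary $j$, the truncation $\cdots \to P\langle q\rangle_{-j+1} \to P\langle q\rangle_{-j}$ is, up to shift, a projective resolution of the image $G_j$ of $\partial_{-j+1}$. Each $G_j$ is again Gorenstein projective, since it is a syzygy or cosyzygy in the totally acyclic complex $P\langle q\rangle$. Hence $\H^{j+1}\LFunctorHomAngles{q}X \cong \Ext^1_Q(G_j,X)$ for every $j \in \BZ$. Each $G_j$ has a complete projective resolution, namely $P\langle q\rangle$ shifted, consisting of objects of ${}_Q\prj$.

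\emph{(i) $\Rightarrow$ (ii).} If $X \in {}_Q\cE$, then $X$ has finite projective dimension. A Gorenstein projective $G$ satisfies $\Ext^{\geqslant 1}_Q(G,L)=0$ for projective $L$, and therefore $\Ext^{\geqslant 1}_Q(G,X)=0$ for every $X$ of finite projective dimension; this follows by dimension shifting along a finite projective resolution of $X$. Thus $\Ext^1_Q(G_j,X)=0$ for all $j$, so $\LFunctorHomAngles{q}X$ is acyclic. Alternatively, one can use that $\LFunctorHomAngles{q}$ is exact (Lemma \ref{lem:5i_5ii_30_32}(i)) and that $\LFunctorHomAngles{q}L$ is acyclic for projective $L$, by the total acyclicity of $P\langle q\rangle$. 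A finite projective resolution of $X$ then gives a finite exact sequence of acyclic complexes ending in $\LFunctorHomAngles{q}X$, so the latter is acyclic. I would present this second argument, since it is cleaner.

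\emph{(ii) $\Rightarrow$ (i).} Acyclicity gives in particular $\H^1\LFunctorHomAngles{q}X \cong \Ext^1_Q(S\langle q\rangle,X)=0$ for every $q \in Q_0$, by Remark \ref{rmk:4}. This is condition (ii) of Lemma \ref{lem:43} with $A=\Bk$, because $S_q\Bk = S\langle q\rangle\Tensor{\Bk}\Bk \cong S\langle q\rangle$. Lemma \ref{lem:43} then gives $X \in {}_Q\cE$. So this direction needs only positive degrees.

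\emph{Main obstacle.} The real content is the identification $\H^1\LFunctorHomAngles{q}X \cong \Ext^1_Q(S\langle q\rangle,X)$, together with the fact that projective objects are sent to acyclic complexes. The first is Remark \ref{rmk:4}. For the second, when $L \in {}_Q\Prj$ is not finitely generated, I rely on Definition \ref{def:Gorenstein_projective_objects}, which requires $\Hom_Q(P\langle q\rangle,L)$ to be acyclic for all projective $L$. The remaining point to check is that exactness of $\LFunctorHomAngles{q}$ turns a finite projective resolution of $X$ into an exact sequence of complexes. This holds because $P\langle q\rangle$ is degreewise projective, and the acyclicity of the end term then follows by splitting the resolution into short exact sequences and using long exact homology sequences.
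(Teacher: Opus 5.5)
Your proposal is correct and is essentially the paper's proof. For (i)$\Rightarrow$(ii), the paper cites Holm's Proposition~2.3: a complete projective resolution stays acyclic under $\Hom_Q(-,X)$ when $X$ has finite projective dimension. Your argument (exactness of $\LFunctorHomAngles{q}$ plus splitting a finite projective resolution) proves this directly. For (ii)$\Rightarrow$(i), the paper, like you, uses only $\H^1\LFunctorHomAngles{q}X=0$ with Remark~\ref{rmk:4}, and then the $\Ext^1_Q(S\langle p\rangle,-)$-characterisation of ${}_{Q}\cE$. The paper takes that characterisation from Holm--J\o rgensen's JLMS paper (Thm.~7.1), whereas you route it through Lemma~\ref{lem:43} at $A=\Bk$.

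The one slip is the degree bookkeeping in your preliminary homology computation. Taken literally with $j=0$, it gives $\H^1\LFunctorHomAngles{q}X\cong\Ext^1_Q(\Omega^1S\langle q\rangle,X)$, which disagrees with Remark~\ref{rmk:4}. Your final argument does not use that computation, so the proof is unaffected.
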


\begin{proof}
(i)$\Rightarrow$(ii): Suppose that (i) holds, that is, $X$ has finite projective dimension in ${}_{ Q }\!\Mod$ by Equation \eqref{equ:exact_objects}.  Let $q \in Q_0$ be given.  Then $\LFunctorHomAngles{q}X = \Hom_Q( P\langle q \rangle,X )$ is acyclic by \cite[prop.\ 2.3]{Holm-GHD} because $P\langle q \rangle$ is a complete projective resolution.

(ii)$\Rightarrow$(i): Suppose that (ii) holds.  In particular, for each $q \in Q_0$ we have $\H^1\!\LFunctorHomAngles{q}X = 0$, and this implies $X \in {}_{ Q }\cE$ by Remark \ref{rmk:4} and \cite[thm.\ 7.1]{HJ-JLMS}.
\end{proof}

\section{The adjoint functors $\JFunctorHomAngles{q}$ and $\MFunctorHomAngles{q}$}
\label{sec:JM}

This section shows that the adjoint functors $\JFunctorHomAngles{q}$ and $\MFunctorHomAngles{q}$ exist and establishes some of their properties.

\begin{Proposition}
\label{pro:13_14_33_21A}
Let $q \in Q_0$ be given.  The functor $\LFunctorHomAngles{q}: {}_{ Q,A }\!\Mod \xrightarrow{} \Ch( {}_{ A }\!\Mod )$ has a left adjoint functor
\[
  \JFunctorHomAngles{q} : \Ch( {}_{ A }\!\Mod ) \xrightarrow{} {}_{ Q,A }\!\Mod
\]
and a right adjoint functor
\[
  \MFunctorHomAngles{q} : \Ch( {}_{ A }\!\Mod ) \xrightarrow{} {}_{ Q,A }\!\Mod.
\]
\end{Proposition}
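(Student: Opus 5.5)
Since $\LFunctorHomAngles{q}$ is exact and respects set indexed products and coproducts by Lemma \ref{lem:5i_5ii_30_32}(i), one route is the adjoint functor theorem for Grothendieck categories: both ${}_{ Q,A }\!\Mod$ and $\Ch( {}_{ A }\!\Mod )$ are Grothendieck categories, and an exact functor between them preserves all limits and colimits because it preserves products, coproducts, kernels and cokernels. Since Grothendieck categories are locally presentable, hence complete, cocomplete and well-powered with a generator and a cogenerator, the special adjoint functor theorem supplies a left adjoint $\JFunctorHomAngles{q}$ (from preservation of limits) and a right adjoint $\MFunctorHomAngles{q}$ (from preservation of colimits). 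This settles existence, but I would prefer explicit formulas, since later sections will need to compute with these functors, for example to recover the compression and cocompression functors of Section \ref{subsec:differential_modules}.

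For the left adjoint, I would write $\LFunctorHomAngles{q}X = \Hom_Q( P\langle q \rangle,X )$ degreewise as $( \LFunctorHomAngles{q}X )^{i} = \Hom_Q( P\langle q \rangle_{ -i },X )$. The aim is to realise $\JFunctorHomAngles{q}C$ as a coend, or "tensor", of $C$ with $P\langle q \rangle$: form the $Q$-shaped diagram
\[
  \coprod_i P\langle q \rangle_{ i } \Tensor{\Bk} C_i
\]
with the indexing adjusted to the cohomological convention, and take the cokernel of the two maps from $\coprod_i P\langle q \rangle_{ i } \Tensor{\Bk} C_{ i-1 }$, or a similar shift. Precisely, $\JFunctorHomAngles{q}C$ is the coequaliser expressing that a family of maps $P\langle q \rangle_i \Tensor{\Bk} C_i \to X$ is compatible with the differentials. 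A chain map $C \to \Hom_Q( P\langle q \rangle,X )$ is then the same as a family of $A$-linear maps $C_i \to \Hom_Q( P\langle q \rangle_i,X ) \cong \Hom_{ Q,A }( P\langle q \rangle_i \Tensor{\Bk} C_i,X )$ commuting with the differentials, where the isomorphism is tensor--hom adjunction, and this is exactly a morphism out of the coequaliser.

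For the right adjoint, I would use that each $P\langle q \rangle_i$ lies in ${}_{ Q }\!\prj$, so $\Hom_Q( P\langle q \rangle_i,- ) \cong \Hom_{ \Bk }( P\langle q \rangle_i,\Bk ) \Tensor{Q} -$ by Lemma \ref{lem:algebraic_duality} and the standard Yoneda computation, as in Lemma \ref{lem:16A_28_35_56}(iv). Its right adjoint is then $\Hom_{ \Bk }\big( \Hom_{ \Bk }( P\langle q \rangle_i,\Bk ),- \big)$. Dually, $\MFunctorHomAngles{q}C$ is the equaliser inside $\prod_i \Hom_{ \Bk }\big( \Hom_{ \Bk }( P\langle q \rangle_i,\Bk ),C_i \big)$ of the two maps induced by the differentials of $P\langle q \rangle$ and of $C$. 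By Lemma \ref{lem:16A_28_35_56}(i), each factor is a finite sum of copies of $G_r$-type objects, $\Hom_{ \Bk }( Q( -,r ),\text{--} )$, mirroring the products in the cocompression example.

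The main obstacle is bookkeeping: matching homological and cohomological indexing and signs so that the coequaliser and equaliser conditions correspond exactly to the chain map condition. Naturality, and checking that $\Hom_Q( P\langle q \rangle_i,- )$ is isomorphic to a tensor functor, are routine given Lemma \ref{lem:16A_28_35_56}. If the explicit route gets messy, I would fall back on the adjoint functor theorem argument above for existence.
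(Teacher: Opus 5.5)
Your first paragraph is correct and is essentially the paper's proof. The paper also invokes the Special Adjoint Functor Theorem (Mac Lane's corollary on p.~130). It uses that ${}_{ Q,A }\!\Mod$ is a Grothendieck category: locally small, well powered, with a small cogenerating set and all limits. It also uses that $\LFunctorHomAngles{q}$ preserves limits and colimits because it is exact and preserves products and coproducts (Lemma~\ref{lem:5i_5ii_30_32}(i)). For the right adjoint it applies the same theorem to $\LFunctorHomAngles{q}^{\opp}$, and there it explicitly checks co-well-poweredness (from well-poweredness, since the category is abelian) and the existence of a small generating set. You leave these two conditions implicit, but they hold.

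The explicit construction you say you would prefer has a genuine error in the right adjoint. The isomorphism $\Hom_Q( P_i,- ) \cong \Hom_{\Bk}( P_i,\Bk ) \Tensor{Q} -$ is false; it is off by a Serre twist.
\begin{itemize}
\item For $P_i = Q( r,- )$ the left side is $X( r )$ by Yoneda.
\item Lemma~\ref{lem:16A_28_35_56}(i) gives $\Hom_{\Bk}( Q( r,- ),\Bk ) \cong Q( -,\SerreFunctor r )$, so the right side is $X( \SerreFunctor r )$.
\end{itemize}
Consequently your candidate $\Hom_{\Bk}\big( \Hom_{\Bk}( P_i,\Bk ),- \big)$ is the \emph{left} adjoint of $\Hom_Q( P_i,- )$, not the right one. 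Indeed it is isomorphic to $P_i \Tensor{\Bk} -$, because $P_i$ takes values in ${}_{\Bk}\!\prj$. For $P_i = Q( r,- )$ it is $F_r$, whereas the right adjoint of $E_r$ is $G_r \cong F_{ \SerreFunctor^{-1} r }$. Concretely, for $Q = \Qcpx$, where $\SerreFunctor p = p-1$, you would produce the disk in degrees $r, r-1$ instead of the disk in degrees $r+1, r$.

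The fix is to replace $\Hom_{\Bk}( P_i,\Bk )$ by $\Hom_Q( P_i,Q ) \cong \Hom_{\Bk}( P_i,\Bk ) \circ \SerreFunctor$. Then the right adjoint of $\Hom_Q( P_i,- )$ is $M \mapsto ( P_i \Tensor{\Bk} M ) \circ \SerreFunctor$. Accordingly, $\MFunctorHomAngles{q}C$ is an equaliser inside $\prod_i ( P\langle q \rangle_{ -i } \Tensor{\Bk} C_i ) \circ \SerreFunctor$.

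Your coequaliser description of $\JFunctorHomAngles{q}$ is fine, up to indexing. In the paper's conventions $( \LFunctorHomAngles{q}X )^i = \Hom_Q( P\langle q \rangle_i,X )$, so a homologically indexed $C_i$ pairs with $P\langle q \rangle_{ -i }$, not $P\langle q \rangle_i$.

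None of this affects the statement, which your adjoint functor theorem argument already proves. Note also that the paper never needs explicit formulas. Every later property of $\JFunctorHomAngles{q}$ and $\MFunctorHomAngles{q}$ (exactness, the $\Ext$ isomorphisms, Lemma~\ref{lem:18A_36_55}) is derived from the adjunctions alone.
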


\begin{proof}
We will prove this using the version of the Special Adjoint Functor Theorem given in \cite[cor., p.\ 130]{MacLane-book}.

The left adjoint exists by \cite[cor., p.\ 130]{MacLane-book} as a consequence of the following properties:
\begin{itemize}
\setlength\itemsep{4pt}

  \item  ${}_{ Q,A }\!\Mod$ has small $\Hom$ sets and is well powered since it is defined as
\[  
  \{\, \mbox{$\Bk$-linear functors $Q \xrightarrow{} {}_{ A }\!\Mod$} \,\}
\]  
while $Q$ is small.
  
  \item  ${}_{ Q,A }\!\Mod$ has a small cogenerating set by \cite[prop.\ 3.12(b)]{HJ-JLMS} and set indexed limits by \cite[cor.\ X.4.4]{Stenstroem} since it is a Grothendieck category by \cite[prop.\ 3.12]{HJ-JLMS}.
  
  \item  $\Ch( {}_{ A }\!\Mod )$ has small $\Hom$ sets.
  
  \item  $\LFunctorHomAngles{q}$ respects set indexed limits since it is exact and respects set indexed products by Lemma \ref{lem:5i_5ii_30_32}(i).

\end{itemize}

For the right adjoint, consider the opposite functor $\LFunctorHomAngles{q}^{ \opp } : {}_{ Q,A }\!\Mod^{ \opp } \xrightarrow{} \Ch( {}_{ A }\!\Mod )^{ \opp }$ in the sense of \cite[p.\ 33]{MacLane-book}.  Then $\LFunctorHomAngles{q}$ has a right adjoint if $\LFunctorHomAngles{q}^{ \opp }$ has a left adjoint, see \cite[lem.\ 3.8]{HJ-Kyoto}, and this left adjoint exists by \cite[cor., p.\ 130]{MacLane-book} as a consequence of the following properties:
\begin{itemize}
\setlength\itemsep{4pt}

  \item  ${}_{ Q,A }\!\Mod^{ \opp }$ has small $\Hom$ sets because so does ${}_{ Q,A }\!\Mod$ by the previous part of the proof.  And ${}_{ Q,A }\!\Mod^{ \opp }$ is well powered because ${}_{ Q,A }\!\Mod$ is co-well powered.  To see that ${}_{ Q,A }\!\Mod$ is co-well powered, observe that it is well powered by the previous part of the proof, and that it is abelian whence quotients and subobjects correspond.

  \item  ${}_{ Q,A }\!\Mod^{ \opp }$ has a small cogenerating set and set indexed limits because ${}_{ Q,A }\!\Mod$ has a small generating set by \cite[prop.\ 3.12(a)]{HJ-JLMS} and set indexed colimits by definition since it is a Grothendieck category by \cite[prop.\ 3.12]{HJ-JLMS}.
  
  \item  $\Ch( {}_{ A }\!\Mod )^{ \opp }$ has small $\Hom$ sets because so does $\Ch( {}_{ A }\!\Mod )$.
  
  \item  $\LFunctorHomAngles{q}^{ \opp }$ respects set indexed limits because $\LFunctorHomAngles{q}$ respects set indexed colimits since it is exact and respects set indexed coproducts by Lemma \ref{lem:5i_5ii_30_32}(i).  \qedhere

\end{itemize}
\end{proof}

\begin{Remark}
\label{rmk:13_14_33_21A}
By symmetry, the functor $\LFunctorHomBrackets{q}: \Mod_{ Q,A } \xrightarrow{} \Ch( \Mod_A )$ has a left adjoint functor $\JFunctorHomBrackets{q}$ and a right adjoint functor $\MFunctorHomBrackets{q}$.
\end{Remark}

\begin{Remark}
Concrete formulae for $\JFunctorHomAngles{q}$ and $\MFunctorHomAngles{q}$ were provided in two specific cases in Sections \ref{subsec:differential_modules} and \ref{subsec:3-complexes}.  It may be interesting to provide formulae in higher generality, but we will not attempt to do so.
\end{Remark}

\begin{Remark}
There is a projective model category structure on ${}_{ Q,A }\!\Mod$; see \cite[thm.\ A]{HJ-JLMS}.  A special case of this is the standard model category structure on $\Ch( {}_{ A }\!\Mod )$; see \cite[sec.\ 2.3]{Hovey_Book}.  With respect to these abelian model category structures, $( \JFunctorHomAngles{q},\LFunctorHomAngles{q} )$ and $( \LFunctorHomAngles{q},\MFunctorHomAngles{q} )$ are Quillen adjunctions in the sense of \cite[def.\ 1.3.1]{Hovey_Book}, but we will not pursue this viewpoint.
\end{Remark}

\begin{Lemma}
\label{lem:19}
Let $q \in Q_0$ be given.  The functors $\JFunctorHomAngles{q}$ and $\MFunctorHomAngles{q}$ are exact.
\end{Lemma}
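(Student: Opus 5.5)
We need to show that the left adjoint $\JFunctorHomAngles{q}$ and the right adjoint $\MFunctorHomAngles{q}$ of $\LFunctorHomAngles{q}$ are exact. Being adjoints, $\JFunctorHomAngles{q}$ is automatically right exact and $\MFunctorHomAngles{q}$ is automatically left exact. So it remains to show that $\JFunctorHomAngles{q}$ preserves monomorphisms and that $\MFunctorHomAngles{q}$ preserves epimorphisms. We will test this against enough injective, respectively projective, objects, using Lemma \ref{lem:5iii_b} together with the exactness of $\LFunctorHomAngles{q}$ from Lemma \ref{lem:5i_5ii_30_32}(i).

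\emph{The functor $\JFunctorHomAngles{q}$.} Let $C' \xrightarrow{ \iota } C$ be a monomorphism in $\Ch( {}_{ A }\!\Mod )$ and put $K = \Ker \JFunctorHomAngles{q}\iota$. Since ${}_{ Q,A }\!\Mod$ is a Grothendieck category, it has enough injectives. Hence it suffices to show that for every $J \in {}_{ Q,A }\!\Inj$ the map
\[
  \Hom_{ Q,A }( \JFunctorHomAngles{q}C,J ) \xrightarrow{} \Hom_{ Q,A }( \JFunctorHomAngles{q}C',J )
\]
is surjective. Indeed, then every morphism $\JFunctorHomAngles{q}C' \xrightarrow{} J$ extends along $\JFunctorHomAngles{q}\iota$, so it vanishes on $K$. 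Taking $J$ to be an injective envelope of $K$ and using a morphism $\JFunctorHomAngles{q}C' \xrightarrow{} J$ that extends the inclusion $K \xrightarrow{} J$, we conclude that $K = 0$. By adjunction, the displayed map identifies with
\[
  \Hom_{ \Ch( A ) }( C,\LFunctorHomAngles{q}J ) \xrightarrow{} \Hom_{ \Ch( A ) }( C',\LFunctorHomAngles{q}J ),
\]
which is surjective because $\LFunctorHomAngles{q}J$ is an injective object of $\Ch( {}_{ A }\!\Mod )$ by Lemma \ref{lem:5iii_b}(ii) and $\iota$ is a monomorphism.

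\emph{The functor $\MFunctorHomAngles{q}$.} The argument is dual. Let $C \xrightarrow{ \pi } C''$ be an epimorphism. For $L \in {}_{ Q,A }\!\Prj$, adjunction identifies the map $\Hom_{ Q,A }( L,\MFunctorHomAngles{q}C ) \xrightarrow{} \Hom_{ Q,A }( L,\MFunctorHomAngles{q}C'' )$ with the map $\Hom( \LFunctorHomAngles{q}L,C ) \xrightarrow{} \Hom( \LFunctorHomAngles{q}L,C'' )$. The latter is surjective since $\LFunctorHomAngles{q}L$ is projective by Lemma \ref{lem:5iii_b}(i). Choosing $L$ projective with an epimorphism $L \xrightarrow{} \MFunctorHomAngles{q}C''$, which exists because ${}_{ Q,A }\!\Mod$ has enough projectives, this morphism lifts through $\MFunctorHomAngles{q}\pi$. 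Hence $\MFunctorHomAngles{q}\pi$ is an epimorphism.

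\emph{Main obstacle.} The only delicate step is the kernel argument for $\JFunctorHomAngles{q}$. We must phrase it cleanly: embedding $K$ into an injective $J$, the embedding extends to a map $\JFunctorHomAngles{q}C' \xrightarrow{} J$ by injectivity of $J$, and by the surjectivity shown above this map factors through $\JFunctorHomAngles{q}\iota$. A map factoring through $\JFunctorHomAngles{q}\iota$ kills $K$, so $K = 0$. Everything else is formal once Lemma \ref{lem:5iii_b} is available.
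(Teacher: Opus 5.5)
Your proof is correct and rests on the same key ingredients as the paper's: the adjunctions combined with Lemma~\ref{lem:5iii_b}, i.e.\ that $\LFunctorHomAngles{q}$ sends projective and injective objects of ${}_{ Q,A }\!\Mod$ to projective and injective objects of $\Ch( {}_{ A }\!\Mod )$. The paper packages this more compactly. It checks exactness pointwise via the adjunction isomorphism $\Hom_A\!\big( A,E_p\MFunctorHomAngles{q}( - ) \big) \cong \Hom_{ \Ch( {}_{ A }\!\Mod ) }( \LFunctorHomAngles{q}F_pA,- )$, which is exact because $\LFunctorHomAngles{q}F_pA$ is projective, and it says only that a similar argument handles $\JFunctorHomAngles{q}$. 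Your version instead splits exactness into the half-exactness of adjoints plus preservation of monomorphisms and epimorphisms, tested against enough injectives and projectives.
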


\begin{proof}
For each $p \in Q_0$ we have
\[
  \Hom_A\!\big( A,E_p\MFunctorHomAngles{q}( - ) \big)
  \cong \Hom_{ Q,A }\!\big( F_pA,\MFunctorHomAngles{q}( - ) \big)
  \cong \Hom_{ \Ch( {}_{ A }\!\Mod ) }( \LFunctorHomAngles{q}F_pA,- )
\]
by adjointness.  The right hand side is an exact functor because $\LFunctorHomAngles{q}F_pA$ is a projective object of $\Ch( {}_{ A }\!\Mod )$ by Lemma \ref{lem:5iii_b}(i) since $F_pA$ is in ${}_{ Q,A }\!\Prj$ by \cite[lem.\ 3.11]{HJ-JLMS}.  Hence the left hand side is an exact functor, and this implies that so is $E_p\MFunctorHomAngles{q}( - )$.  Since this holds for each $p \in Q_0$, the functor $\MFunctorHomAngles{q}$ is exact.

A similar argument shows that $\JFunctorHomAngles{q}$ is exact.
\end{proof}

\begin{Lemma}
\label{lem:18_39}
Given $q \in Q_0$, $j \in \BZ$, $X \in {}_{ Q,A }\!\Mod$, and $C \in \Ch( {}_{ A }\!\Mod )$, there are the following isomorphisms in ${}_{ \Bk }\!\Mod$:
\begin{align*}
  \Ext_{ \Ch( {}_{ A }\!\Mod ) }^j( C,\LFunctorHomAngles{q}X ) & \cong \Ext_{ Q,A }^j( \JFunctorHomAngles{q}C,X ), \\
  \Ext_{ \Ch( {}_{ A }\!\Mod ) }^j( \LFunctorHomAngles{q}X,C ) & \cong \Ext_{ Q,A }^j( X,\MFunctorHomAngles{q}C ).
\end{align*}
\end{Lemma}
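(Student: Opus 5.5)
The plan is to deduce both isomorphisms from the adjunctions of Proposition \ref{pro:13_14_33_21A}, using the same general principle as in the proof of Lemma \ref{lem:40}. Namely, \cite[lem.\ 5.1]{HJ-Kyoto} says that if two abelian categories with enough projectives/injectives are related by an adjoint pair of exact functors, then the adjunction isomorphism lifts to all Ext groups. Both ${}_{ Q,A }\!\Mod$ and $\Ch( {}_{ A }\!\Mod )$ are Grothendieck categories with enough projective and enough injective objects, so it suffices to know that all functors involved are exact.

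For the first isomorphism we use the adjoint pair $( \JFunctorHomAngles{q},\LFunctorHomAngles{q} )$; for the second, the pair $( \LFunctorHomAngles{q},\MFunctorHomAngles{q} )$. Exactness of $\LFunctorHomAngles{q}$ is Lemma \ref{lem:5i_5ii_30_32}(i), and exactness of $\JFunctorHomAngles{q}$ and $\MFunctorHomAngles{q}$ is Lemma \ref{lem:19}. So \cite[lem.\ 5.1]{HJ-Kyoto} applies directly and gives both isomorphisms for all $j$. For $j<0$ both sides are zero, and for $j=0$ the isomorphisms are the adjunction isomorphisms themselves.

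If one prefers not to cite the black box, the argument runs as follows for the second isomorphism (the first is dual).
\begin{itemize}
\item Take an injective resolution $C \to I$ in $\Ch( {}_{ A }\!\Mod )$.
\item Since $\MFunctorHomAngles{q}$ is right adjoint to the exact functor $\LFunctorHomAngles{q}$, it preserves injective objects.
\item Since $\MFunctorHomAngles{q}$ is exact, $\MFunctorHomAngles{q}C \to \MFunctorHomAngles{q}I$ is an injective resolution.
\item Applying the adjunction degreewise gives an isomorphism of complexes $\Hom( \LFunctorHomAngles{q}X,I ) \cong \Hom_{ Q,A }( X,\MFunctorHomAngles{q}I )$.
\item Taking cohomology yields the claim.
\end{itemize}

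For the first isomorphism, one instead uses injective resolutions of $X$ together with the facts that $\LFunctorHomAngles{q}$ preserves injectives and is exact. That $\LFunctorHomAngles{q}$ preserves injectives is Lemma \ref{lem:5iii_b}(ii), and also follows from its left adjoint $\JFunctorHomAngles{q}$ being exact.

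There is no real obstacle; the only point needing care is confirming the exactness hypotheses, which are already available from the earlier lemmas.
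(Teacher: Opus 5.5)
Your proposal is correct and is essentially the paper's proof, which likewise just combines the exactness of $\LFunctorHomAngles{q}$ (Lemma \ref{lem:5i_5ii_30_32}(i)) and of $\JFunctorHomAngles{q}$, $\MFunctorHomAngles{q}$ (Lemma \ref{lem:19}) with \cite[lem.\ 5.1]{HJ-Kyoto}. Your unpacked version via injective resolutions is also sound: it uses that $\LFunctorHomAngles{q}$ and $\MFunctorHomAngles{q}$ preserve injectives as right adjoints of exact functors, together with their own exactness.
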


\begin{proof}
The functor $\LFunctorHomAngles{q}$ is exact by Lemma \ref{lem:5i_5ii_30_32}(i), and the functors $\JFunctorHomAngles{q}$ and $\MFunctorHomAngles{q}$ are exact by Lemma \ref{lem:19}, so the isomorphisms hold by \cite[lem.\ 5.1]{HJ-Kyoto}.
\end{proof}

\begin{Lemma}
\label{lem:18A_36_55}
Let $q \in Q_0$ be given.  
\begin{enumerate}
\setlength\itemsep{4pt}

  \item  There is an isomorphism in ${}_{ Q,A }\!\Mod$,
\[
  \JFunctorHomAngles{q}( L \Tensor{A} C )
  \cong L \Tensor{A} \JFunctorHomAngles{q}( C ),
\]
natural with respect to $L \in {}_{ A }\!\Mod_A$ and $C \in \Ch( {}_{ A }\!\Mod )$.

  \item  There is an isomorphism in ${}_{ Q,A }\!\Mod$,
\[
  \MFunctorHomAngles{q}\Hom_A( L,C )
  \cong \Hom_A( L,\MFunctorHomAngles{q}C ),
\]
natural with respect to $L \in {}_{ A }\!\Mod_A$ and $C \in \Ch( {}_{ A }\!\Mod )$.

  \item  There is an isomorphism in $\Mod_{ Q,A }$,
\[
  \MFunctorHomBrackets{q}\Hom_A( C,L ) \cong \Hom_A\!\big( ( \JFunctorHomAngles{q}\Sigma C ) \circ \SerreFunctor^{ -1 },L \big),
\]
natural with respect to $L \in {}_{ A }\!\Mod_A$ and $C \in \Ch( {}_{ A }\!\Mod )$.

\end{enumerate}
\end{Lemma}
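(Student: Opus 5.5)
All three isomorphisms will be proved by the Yoneda lemma: in each case I compare the functors represented by the two sides, using the adjunctions of Proposition \ref{pro:13_14_33_21A} (and Remark \ref{rmk:13_14_33_21A}) together with the compatibilities of $\LFunctorHomAngles{q}$ and $\LFunctorHomBrackets{q}$ with extra structure from Lemma \ref{lem:17A_29_54_57_38A_40A} and Remark \ref{rmk:17A_29_54_57_38A_40A}. Naturality in $L$ and $C$ then follows from the naturality of every step.

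\textbf{Part (i).} For $X \in {}_{ Q,A }\!\Mod$ I compute
\begin{align*}
  \Hom_{ Q,A }\!\big( \JFunctorHomAngles{q}( L \Tensor{A} C ),X \big)
  & \cong \Hom_{ \Ch }\!\big( L \Tensor{A} C,\LFunctorHomAngles{q}X \big)
  \cong \Hom_{ \Ch }\!\big( C,\Hom_A( L,\LFunctorHomAngles{q}X ) \big) \\
  & \cong \Hom_{ \Ch }\!\big( C,\LFunctorHomAngles{q}\Hom_A( L,X ) \big)
  \cong \Hom_{ Q,A }\!\big( \JFunctorHomAngles{q}C,\Hom_A( L,X ) \big) \\
  & \cong \Hom_{ Q,A }\!\big( L \Tensor{A} \JFunctorHomAngles{q}C,X \big).
\end{align*}
The third step is the $A$-bimodule version of Lemma \ref{lem:17A_29_54_57_38A_40A}(ii): $\Hom_A( L,- )$ applied to the diagram $X$ commutes with $\Hom_Q( P\langle q \rangle,- )$, which is proved exactly as in Lemma \ref{lem:16A_28_35_56}(ii). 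The remaining steps are adjunctions taken objectwise.

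\textbf{Part (ii).} This is dual to part (i). For $X \in {}_{ Q,A }\!\Mod$ I compute
\begin{align*}
  \Hom_{ Q,A }\!\big( X,\MFunctorHomAngles{q}\Hom_A( L,C ) \big)
  & \cong \Hom_{ \Ch }\!\big( \LFunctorHomAngles{q}X,\Hom_A( L,C ) \big)
  \cong \Hom_{ \Ch }\!\big( L \Tensor{A} \LFunctorHomAngles{q}X,C \big) \\
  & \cong \Hom_{ \Ch }\!\big( \LFunctorHomAngles{q}( L \Tensor{A} X ),C \big)
  \cong \Hom_{ Q,A }\!\big( X,\Hom_A( L,\MFunctorHomAngles{q}C ) \big).
\end{align*}
The third step uses Remark \ref{rmk:17A_29_54_57_38A_40A}.

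\textbf{Part (iii).} This is the delicate part, and I expect it to be the main obstacle. Take $Y \in \Mod_{ Q,A }$. By Remark \ref{rmk:13_14_33_21A},
\[
  \Hom_{ Q^{ \opp },A^{ \opp } }\!\big( Y,\MFunctorHomBrackets{q}\Hom_A( C,L ) \big)
  \cong \Hom_{ \Ch( \Mod_A ) }\!\big( \LFunctorHomBrackets{q}Y,\Hom_A( C,L ) \big).
\]
Next I use a swap isomorphism for Hom into a bimodule, of the same kind as Lemma \ref{lem:16A_28_35_56}(iii) but for chain complexes. It identifies the right-hand side with maps $C \to \Hom_{ A^{ \opp } }( \LFunctorHomBrackets{q}Y,L )$, with suspension signs to track. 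Then I need the $A$-version of Lemma \ref{lem:17A_29_54_57_38A_40A}(iv):
\[
  \Hom_{ A^{ \opp } }( \LFunctorHomBrackets{q}Y,L ) \cong \Sigma^{ -1 }\LFunctorHomAngles{q}\big( \Hom_{ A^{ \opp } }( Y,L ) \circ \SerreFunctor \big).
\]
I would prove this by rerunning Lemma \ref{lem:16A_28_35_56}(iv)--(vi) with $\Bk$ replaced by $L$ and $\Hom_{ \Bk }$ replaced by $\Hom_{ A^{ \opp } }$. This is legitimate because $P\{ q \}$ consists of objects of $\prj_Q$, so the Yoneda reductions and the extension via Auslander's \cite[prop.\ 2.3(b)]{AusRepI} go through unchanged.

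Now move the $\Sigma^{ -1 }$ across to get maps $\Sigma C \to \LFunctorHomAngles{q}\big( \Hom_{ A^{ \opp } }( Y,L ) \circ \SerreFunctor \big)$. The adjunction $( \JFunctorHomAngles{q},\LFunctorHomAngles{q} )$ turns these into morphisms
\[
  \JFunctorHomAngles{q}\Sigma C \to \Hom_{ A^{ \opp } }( Y,L ) \circ \SerreFunctor .
\]
Precomposing with the autoequivalence $\SerreFunctor^{ -1 }$ turns them into morphisms
\[
  ( \JFunctorHomAngles{q}\Sigma C ) \circ \SerreFunctor^{ -1 } \to \Hom_{ A^{ \opp } }( Y,L ).
\]
Applying Remark \ref{rmk:16A_28_35_56} once more identifies these with morphisms
\[
  Y \to \Hom_A\!\big( ( \JFunctorHomAngles{q}\Sigma C ) \circ \SerreFunctor^{ -1 },L \big).
\]
The Yoneda lemma then finishes the proof.

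In part (iii), the points needing the most care are that the shifts $\Sigma^{\pm1}$ land on $\Sigma C$ rather than $\Sigma^{-1}C$, and that the left and right $A$-actions on $L$ are used consistently throughout.
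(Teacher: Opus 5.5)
Your proposal is correct and follows the paper's proof essentially step for step. The paper uses the same Yoneda-style chains of adjunctions in all three parts, and in part (iii) it uses the same sequence of steps: the chain-complex swap isomorphism (cited there as \cite[prop.\ 4.4.10]{CFH-book}), the $A$-version of Lemma \ref{lem:17A_29_54_57_38A_40A}(iv), moving $\Sigma^{-1}$ onto $C$, the adjunction, precomposition with $\SerreFunctor^{-1}$, and Remark \ref{rmk:16A_28_35_56}. One wording slip, which does not affect the argument: when you rerun Lemma \ref{lem:16A_28_35_56}(iv)--(vi), it is the coefficient module $M$ that gets replaced by $L$, not the $\Bk$ in $\Hom_{\Bk}(P,\Bk)$, since that dual defines $P\{ q \}$ and must stay.
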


\begin{proof}
(i):  There are the following isomorphisms, natural with respect to $L \in {}_{ A }\!\Mod_A$, $C \in \Ch( {}_{ A }\!\Mod )$, and $X \in {}_{ Q,A }\!\Mod$:
\begin{align*}
  \Hom_{ Q,A }\!\big( \JFunctorHomAngles{q}( L \Tensor{A} C ),X \big)
  & \cong \Hom_{ \Ch( {}_{ A }\!\Mod ) }( L \Tensor{A} C,\LFunctorHomAngles{q}X ) && \mbox{Adjointness} \\
  & \cong \Hom_{ \Ch( {}_{ A }\!\Mod ) }\!\big( C,\Hom_A( L,\LFunctorHomAngles{q}X ) \big) && \mbox{Adjointness} \\
  & \cong \Hom_{ \Ch( {}_{ A }\!\Mod ) }\!\big( C,\LFunctorHomAngles{q}\Hom_A( L,X ) \big) && \mbox{Lemma \ref{lem:17A_29_54_57_38A_40A}(ii) and Remark \ref{rmk:17A_29_54_57_38A_40A}} \\
  & \cong \Hom_{ Q,A }\!\big( \JFunctorHomAngles{q}( C ),\Hom_A( L,X ) \big) && \mbox{Adjointness} \\
  & \cong \Hom_{ Q,A }\!\big( L \Tensor{A} \JFunctorHomAngles{q}( C ),X \big). && \mbox{Adjointness}
\end{align*}
This implies part (i) of the lemma.

(ii): There are the following isomorphisms, natural with respect to $L \in {}_{ A }\!\Mod_A$, $C \in \Ch( {}_{ A }\!\Mod )$, and $X \in {}_{ Q,A }\!\Mod$:
\begin{align*}
  \Hom_{ Q,A }\!\big( X,\MFunctorHomAngles{q}\Hom_A( L,C ) \big)
  & \cong \Hom_{ \Ch( {}_{ A }\!\Mod ) }\!\big( \LFunctorHomAngles{q}X,\Hom_A( L,C ) \big) && \mbox{Adjointness} \\
  & \cong \Hom_{ \Ch( {}_{ A }\!\Mod ) }( L \Tensor{A} \LFunctorHomAngles{q}X,C ) && \mbox{Adjointness} \\
  & \cong \Hom_{ \Ch( {}_{ A }\!\Mod ) }\!\big( \LFunctorHomAngles{q}( L \Tensor{A} X ),C \big) && \mbox{Lem.\ \ref{lem:17A_29_54_57_38A_40A}(i) and Rmk.\ \ref{rmk:17A_29_54_57_38A_40A}} \\
  & \cong \Hom_{ Q,A }( L \Tensor{A} X,\MFunctorHomAngles{q}C ) && \mbox{Adjointness} \\
  & \cong \Hom_{ Q,A }\!\big( X,\Hom_A( L,\MFunctorHomAngles{q}C ) \big). && \mbox{Adjointness}
\end{align*}
This implies part (ii) of the lemma.

(iii):  There are the following isomorphisms, natural with respect to $L \in {}_{ A }\!\Mod_A$, $C \in \Ch( {}_{ A }\!\Mod )$, and $Y \in \Mod_{ Q,A }$:
\begin{align*}
  & \Hom_{ Q^{ \opp },A^{ \opp } }\!\big( Y,\MFunctorHomBrackets{q}\Hom_A( C,L ) \big) && \\
  & \;\; \cong \Hom_{ \Ch( \Mod_A ) }\!\big( \LFunctorHomBrackets{q}Y,\Hom_A( C,L ) \big) && \mbox{Adjointness} \\
  & \;\; \cong \Hom_{ \Ch( {}_{ A }\!\Mod ) }\!\big( C,\Hom_{ A^{ \opp } }( \LFunctorHomBrackets{q}Y,L ) \big) && \mbox{\cite[prop.\ 4.4.10]{CFH-book}} \\
  & \;\; \cong \Hom_{ \Ch( {}_{ A }\!\Mod ) }\!\Big( C,\Sigma^{ -1 }\LFunctorHomAngles{q}\big( \Hom_{ A^{ \opp } }( Y,L ) \circ \SerreFunctor \big) \Big) && \mbox{Lemma \ref{lem:17A_29_54_57_38A_40A}(iv) and Remark \ref{rmk:17A_29_54_57_38A_40A}} \\
  & \;\; \cong \Hom_{ \Ch( {}_{ A }\!\Mod ) }\!\Big( \Sigma C,\LFunctorHomAngles{q}\big( \Hom_{ A^{ \opp } }( Y,L ) \circ \SerreFunctor \big) \Big) && \mbox{$\Sigma$ is an automorphism} \\
  & \;\; \cong \Hom_{ Q,A }\!\big( \JFunctorHomAngles{q}\Sigma C,\Hom_{ A^{ \opp } }( Y,L ) \circ \SerreFunctor \big) && \mbox{Adjointness} \\
  & \;\; \cong \Hom_{ Q,A }\!\big( ( \JFunctorHomAngles{q}\Sigma C ) \circ \SerreFunctor^{ -1 },\Hom_{ A^{ \opp } }( Y,L ) \big) && \mbox{$\SerreFunctor$ is an automorphism} \\
  & \;\; \cong \Hom_{ Q^{ \opp },A^{ \opp } }\!\Big( Y,\Hom_A\!\big( ( \JFunctorHomAngles{q}\Sigma C ) \circ \SerreFunctor^{ -1 },L \big) \Big). && \mbox{Lemma \ref{lem:16A_28_35_56}(iii) and Remark \ref{rmk:16A_28_35_56}}
\end{align*}
This implies part (iii) of the lemma.
\end{proof}

\begin{Lemma}
\label{lem:59_68}
Let $p,q \in Q_0$ be given and let $( \cX,\cY )$ be a cotorsion pair in ${}_{ A }\!\Mod$.
\begin{enumerate}
\setlength\itemsep{4pt}

  \item  If $C$ is a chain complex over $\cX\!$, then $( \JFunctorHomAngles{q}C )( p ) \in \cX\!$.

  \item  If $C$ is a chain complex over $\cY\!$, then $( \MFunctorHomAngles{q}C )( p ) \in \cY\!$.

\end{enumerate}
\end{Lemma}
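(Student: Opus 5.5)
We prove (i) via the characterisation $\cX = {}^{\perp}\cY$, that is, $\cX = \{\, M \mid \Ext^1_A( M,Y ) = 0 \text{ for all } Y \in \cY \,\}$. Part (ii) is dual, using $\cY = \cX^{\perp}$.

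For (i), let $Y \in \cY$ be given. Lemma \ref{lem:40} and then Lemma \ref{lem:18_39} give
\[
  \Ext^1_A\big( E_p\JFunctorHomAngles{q}C,Y \big)
  \cong \Ext^1_{ Q,A }\big( \JFunctorHomAngles{q}C,G_pY \big)
  \cong \Ext^1_{ \Ch( {}_{ A }\!\Mod ) }\big( C,\LFunctorHomAngles{q}G_pY \big).
\]
As in the proof of Lemma \ref{lem:5iii_b}, \cite[lem.\ 3.4]{HJ-TAMS} gives $G_pY \cong F_{ \SerreFunctor^{ -1 }p }Y$. Lemma \ref{lem:5iii_a_70} then shows that $\LFunctorHomAngles{q}G_pY$ is a contractible chain complex over $\add Y \subseteq \cY$, since $\cY$ is closed under finite coproducts and direct summands.

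It therefore suffices to show that $\Ext^1_{ \Ch }( C,D ) = 0$ whenever $C$ is a complex over $\cX$ and $D$ is a contractible complex over $\cY$. This is the step needing care. We would invoke Gillespie \cite[prop.\ 3.2]{Gillespie-degreewise}, in the form saying that contractible complexes over $\cY$ lie in the right Ext-orthogonal of the class of degreewise-$\cX$ complexes.

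If that citation does not fit directly, there is a hands-on argument. Write $D$ as a coproduct, which is also a product, of disks $D^n(Y_n) = ( \cdots 0 \to Y_n \xrightarrow{\id} Y_n \to 0 \cdots )$. We have $\Ext^1_{ \Ch }( C,D^n(Y) ) \cong \Ext^1_A( C_{n-1},Y )$ or $\Ext^1_A( C_n,Y )$, depending on the indexing convention, because $D^n(-)$ is right adjoint to an exact evaluation functor and both functors are exact, so \cite[lem.\ 5.1]{HJ-Kyoto} applies. This vanishes since $C_i \in \cX$. Because $D$ is a product of disks, $\Ext^1$ into $D$ is the product of the individual $\Ext^1$'s, so it vanishes as well.

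Hence $\Ext^1_A( ( \JFunctorHomAngles{q}C )( p ),Y ) = 0$ for all $Y \in \cY$, so $( \JFunctorHomAngles{q}C )( p ) \in \cX$.

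For (ii), let $X \in \cX$ be given. Lemma \ref{lem:40} and Lemma \ref{lem:18_39} give
\[
  \Ext^1_A\big( X,E_p\MFunctorHomAngles{q}C \big)
  \cong \Ext^1_{ Q,A }\big( F_pX,\MFunctorHomAngles{q}C \big)
  \cong \Ext^1_{ \Ch }\big( \LFunctorHomAngles{q}F_pX,C \big).
\]
By Lemma \ref{lem:5iii_a_70}, $\LFunctorHomAngles{q}F_pX$ is a contractible complex over $\cX$, which we write as a coproduct of disks on objects of $\cX$. The dual disk adjunction reduces $\Ext^1$ to $\Ext^1_A( X_n,C_m ) = 0$, which holds because $C$ is a complex over $\cY$. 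Hence $( \MFunctorHomAngles{q}C )( p ) \in \cY$.

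The main obstacle is keeping the orthogonality between contractible complexes and degreewise complexes honest. The route through disk decompositions and exact adjoints settles this cleanly.
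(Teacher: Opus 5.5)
Your proposal is correct and follows the paper's proof. It uses the same chain of isomorphisms from Lemmas \ref{lem:40} and \ref{lem:18_39}, the same appeal to Lemma \ref{lem:5iii_a_70} together with \cite[lem.\ 3.4]{HJ-TAMS} to see that $\LFunctorHomAngles{q}G_pY$ is contractible over $\cY$, and the same closing appeal to \cite[prop.\ 3.2]{Gillespie-degreewise}. Your fallback argument, which splits the contractible complex into disks (degreewise finite, so a product as well as a coproduct) and uses the exact disk/evaluation adjunctions with \cite[lem.\ 5.1]{HJ-Kyoto}, is a valid elementary substitute for that citation.
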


\begin{proof}
(i): Let $C$ be a chain complex over ${}_{ A }\!\Mod$ and let $B$ be in ${}_{ A }\!\Mod$.  We can compute as follows by Lemmas \ref{lem:40} and \ref{lem:18_39}:
\begin{equation}
\label{equ:lem:59_68_lower:20}
  \Ext^1_A\!\big( E_p( \JFunctorHomAngles{q}C ),B \big)
  \cong \Ext^1_{ Q,A }( \JFunctorHomAngles{q}C,G_pB )
  \cong \Ext^1_{ \Ch( {}_{ A }\!\Mod ) }( C,\LFunctorHomAngles{q}G_pB ).
\end{equation}
Now suppose that $C$ is a chain complex over $\cX\!$.  Given $B \in \cY\!$, Lemma \ref{lem:5iii_a_70} combined with \cite[lem.\ 3.4]{HJ-TAMS} shows that $\LFunctorHomAngles{q}G_pB$ is a contractible chain complex over $\cY\!$.  Hence \cite[prop.\ 3.2]{Gillespie-degreewise} applied to the cotorsion pair $( \cX,\cY )$ gives that the right hand side of Equation \eqref{equ:lem:59_68_lower:20} is zero, whence the left hand side is also zero.  Since this holds for each $B \in \cY\!$, it follows that $E_p( \JFunctorHomAngles{q}C ) = ( \JFunctorHomAngles{q}C )( p )$ is in $\cX$ as claimed.

(ii): Proved similarly to part (i), but without the need to invoke \cite[lem.\ 3.4]{HJ-TAMS}.
\end{proof}

The next two lemmas have very similar proofs, and we provide only the first.

\begin{Lemma}
\label{lem:15A_lower}
Let $q \in Q_0$ and $C \in \Ch( {}_{ A }\!\Mod )$ be given.  Consider the following conditions:
\begin{enumerate}
\setlength\itemsep{4pt}

  \item  $C$ is acyclic.
  
  \item  For each $\ell \in \BZ$ we have $\JFunctorHomAngles{q}\Sigma^{ \ell }C \in {}_{ Q,A }\cE$.

\end{enumerate}
We have (i)$\Rightarrow$(ii), and if $P\langle q \rangle$ is good in the sense of Definition \ref{def:P_angles}, then we have (ii)$\Rightarrow$(i).
\end{Lemma}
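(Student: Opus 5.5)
The idea is to test membership in ${}_{ Q,A }\cE$ with the criterion of Lemma \ref{lem:43}(iii), and to move the resulting $\Ext$ groups to chain complexes via Lemma \ref{lem:18_39}. By Lemma \ref{lem:43}, $\JFunctorHomAngles{q}\Sigma^{\ell}C \in {}_{ Q,A }\cE$ holds if and only if $\Ext^1_{ Q,A }( \JFunctorHomAngles{q}\Sigma^{\ell}C,S_pJ ) = 0$ for each $p \in Q_0$ and each $J \in {}_{ A }\!\Inj$. Lemma \ref{lem:18_39} rewrites this group as $\Ext^1_{ \Ch( {}_{ A }\!\Mod ) }( \Sigma^{\ell}C,\LFunctorHomAngles{q}S_pJ )$. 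Lemma \ref{lem:17d}(ii) says that $\LFunctorHomAngles{q}S_pJ$ is semiinjective. For semiinjective $I$ and arbitrary $D$, the group $\Ext^1_{\Ch}( D,I )$ is a homotopy class group $\H^0\Hom_A( D,\Sigma I )$. This holds because $\Ext^1$ in $\Ch$ classifies short exact sequences $0 \to I \to E \to D \to 0$; these are degreewise split, since semiinjective complexes are degreewise injective. The degreewise split extensions of $D$ by $I$ are classified by chain maps $D \to \Sigma I$ up to homotopy, and this class of extensions contains all the extensions in question. Hence the condition in (ii) is equivalent to
\[
  \H^{j}\Hom_A\!\big( C,\LFunctorHomAngles{q}S_pJ \big) = 0 \quad \mbox{for all } j \in \BZ,\ p \in Q_0,\ J \in {}_{ A }\!\Inj,
\]
after absorbing $\Sigma^{\ell}$ into the degree $j$.

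For (i)$\Rightarrow$(ii), suppose $C$ is acyclic. Then $\Hom_A( C,I )$ is acyclic for every semiinjective $I$, by the defining property of semiinjectivity. Applying this with $I = \LFunctorHomAngles{q}S_pJ$ gives the vanishing displayed above, so $\JFunctorHomAngles{q}\Sigma^{\ell}C \in {}_{ Q,A }\cE$ for every $\ell$.

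For (ii)$\Rightarrow$(i), assume $P\langle q \rangle$ is good and take $p = q$. By Lemma \ref{lem:9A_13A}, $\LFunctorHomAngles{q}S_qJ$ has $J$, concentrated in degree zero, as a direct summand. Homology of $\Hom_A( C,- )$ respects direct summands, so the displayed vanishing gives $\H^j\Hom_A( C,J ) = 0$ for all $j$ and all injective $J$. Since $\Hom_A( -,J )$ is exact, $\H^j\Hom_A( C,J ) \cong \Hom_A( \H_{-j}C,J )$ with the appropriate indexing. Taking $J$ to be an injective cogenerator, for instance the injective envelope of $\bigoplus$ over simple modules or $\Hom_{\BZ}( A,\BQ/\BZ )$, we get $\H C = 0$, so $C$ is acyclic.

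The main obstacle is the identification of $\Ext^1$ in $\Ch( {}_{ A }\!\Mod )$ into a semiinjective complex with a homotopy group, including the correct bookkeeping of shifts. This is standard for the degreewise split case, which applies here, but I would write it out carefully, or cite \cite{Gillespie-degreewise}, to be sure that every $\ell$ corresponds to every cohomological degree $j$.
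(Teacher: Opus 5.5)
Your proof is correct and follows the paper's route. You use Lemma \ref{lem:43} and Lemma \ref{lem:18_39} to reduce (ii) to the vanishing of $\Ext^1_{\Ch( {}_{ A }\!\Mod )}( \Sigma^{\ell}C,\LFunctorHomAngles{q}S_pJ )$, then semiinjectivity (Lemma \ref{lem:17d}(ii)) for (i)$\Rightarrow$(ii) and the direct summand $J$ from Lemma \ref{lem:9A_13A} for the converse. The only difference is that the paper cites a standard result for vanishing of $\Ext^1$ from an acyclic complex into a semiinjective one and then applies Lemma \ref{lem:43} to chain complexes, whereas you prove these two facts directly via the homotopy description of degreewise split extensions and an injective cogenerator, which is fine.
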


\begin{Lemma}
\label{lem:20_49}
Let $q \in Q_0$ and $C \in \Ch( {}_{ A }\!\Mod )$ be given.  Consider the following conditions:
\begin{enumerate}
\setlength\itemsep{4pt}

  \item  $C$ is acyclic.
  
  \item  For each $\ell \in \BZ$ we have $\MFunctorHomAngles{q}\Sigma^{ \ell }C \in {}_{ Q,A }\cE$.

\end{enumerate}
We have (i)$\Rightarrow$(ii), and if $P\langle q \rangle$ is good in the sense of Definition \ref{def:P_angles}, then we have (ii)$\Rightarrow$(i).
\end{Lemma}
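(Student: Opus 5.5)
We prove Lemma \ref{lem:20_49} the same way one would prove Lemma \ref{lem:15A_lower}: by testing membership in ${}_{ Q,A }\cE$ with Lemma \ref{lem:43} and moving the relevant $\Ext$ groups to $\Ch( {}_{ A }\!\Mod )$ via Lemma \ref{lem:18_39}. For $\MFunctorHomAngles{q}$, which sits on the right of $\LFunctorHomAngles{q}$, the convenient test is Lemma \ref{lem:43}(ii). For each $p \in Q_0$ and $\ell \in \BZ$, Lemma \ref{lem:18_39} gives
\[
  \Ext^1_{ Q,A }( S_pA,\MFunctorHomAngles{q}\Sigma^{ \ell }C )
  \cong \Ext^1_{ \Ch( {}_{ A }\!\Mod ) }( \LFunctorHomAngles{q}S_pA,\Sigma^{ \ell }C ).
\]
So condition (ii) says exactly that the right hand side vanishes for all $p$ and $\ell$.

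(i)$\Rightarrow$(ii): By Lemma \ref{lem:17d}(i), $D = \LFunctorHomAngles{q}S_pA$ is a semiprojective complex. By the standard description of $\Ext^1$ in $\Ch( {}_{ A }\!\Mod )$ from a complex of projectives, $\Ext^1_{\Ch}( D,\Sigma^\ell C )$ is a quotient of, or embeds in, $\H^0\Hom_A( D,\Sigma^{\ell+1} C )$, which is the group of homotopy classes of maps $D \to \Sigma^{\ell+1}C$. I would use the degreewise split case: since $D$ consists of projectives, every extension of $D$ by $\Sigma^\ell C$ is degreewise split, so $\Ext^1_{\Ch}( D,\Sigma^\ell C ) \cong \H^0\Hom_A( D,\Sigma^{\ell+1}C )$ up to the usual sign and shift convention. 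If $C$ is acyclic, this group is $\Hom_{\mathsf{K}}( D,\Sigma^{\ell+1}C ) = 0$, because $D$ is semiprojective. Hence (ii) holds.

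(ii)$\Rightarrow$(i), assuming $P\langle q \rangle$ is good: apply the computation above with $p = q$. The vanishing for all $\ell$ gives $\Hom_{\mathsf{K}}( \LFunctorHomAngles{q}S_qA,\Sigma^{ m }C ) = 0$ for every $m$. By Lemma \ref{lem:9A_13A}, $A$, viewed as a complex concentrated in degree zero, is a direct summand of $\LFunctorHomAngles{q}S_qA$. Therefore $\Hom_{\mathsf{K}}( A,\Sigma^m C ) = 0$ for all $m$, and these groups are the homology modules of $C$ up to indexing. So $C$ is acyclic.

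The main obstacle I expect is the identification $\Ext^1_{\Ch}( D,\Sigma^\ell C ) \cong \H^0\Hom_A( D,\Sigma^{\ell+1} C )$, together with keeping the shift conventions consistent. I would handle it by the degreewise splitting argument above, which needs only that $D$ is degreewise projective and does not require the stronger semiprojectivity. Semiprojectivity enters only in the step $\Hom_{\mathsf{K}}( D,\text{acyclic} ) = 0$. The summand step is routine once Lemma \ref{lem:9A_13A} is available, and it is the only place where goodness is used.
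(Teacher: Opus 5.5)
Your proposal is correct and is the dualisation of the written proof of Lemma \ref{lem:15A_lower} that the paper intends when it omits this proof: you test membership in ${}_{ Q,A }\cE$ with Lemma \ref{lem:43}(ii), move to $\Ext^1_{ \Ch( {}_{ A }\!\Mod ) }( \LFunctorHomAngles{q}S_pA,\Sigma^{ \ell }C )$ via Lemma \ref{lem:18_39}, kill this using semiprojectivity from Lemma \ref{lem:17d}(i), and for the converse extract the summand $A$ of $\LFunctorHomAngles{q}S_qA$ from Lemma \ref{lem:9A_13A}. The only deviation is cosmetic: you justify the vanishing and the final acyclicity step by identifying $\Ext^1$ out of a degreewise projective complex with homotopy classes of maps into a shift, whereas the paper's route cites \cite[prop.\ 2.3.4]{Garcia-Rozas-book} and Lemma \ref{lem:43} applied to chain complexes.
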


\begin{proof}[Proof of Lemma \ref{lem:15A_lower}]
We claim that the following is equivalent to condition (ii):
\begin{itemize}
\setlength\itemsep{4pt}

  \item[(ii')]  For each $\ell \in \BZ$, $p \in Q_0$, and $J \in {}_{ A }\!\Inj$ we have $\Ext_{ \Ch( {}_{ A }\!\Mod ) }^1( \Sigma^{ \ell }C,\LFunctorHomAngles{q}S_pJ ) = 0$.

\end{itemize}
The reason is that Lemma \ref{lem:18_39} gives
\begin{equation}
\label{equ:lem:15A_lower:10}
  \Ext_{ \Ch( {}_{ A }\!\Mod ) }^1( \Sigma^{ \ell }C,\LFunctorHomAngles{q}S_pJ )
  \cong \Ext_{ Q,A }^1( \JFunctorHomAngles{q}\Sigma^{ \ell }C,S_pJ ),
\end{equation}
so (ii') is equivalent to the condition that for each $\ell \in \BZ$, $p \in Q_0$, and $J \in {}_{ A }\!\Inj$ the right hand side of \eqref{equ:lem:15A_lower:10} is zero.  By Lemma \ref{lem:43} this is equivalent to (ii).

(i)$\Rightarrow$(ii'): Suppose that (i) holds.  Then (ii') is true because if $\ell \in \BZ$, $p \in Q_0$, and $J \in {}_{ A }\!\Inj$ are given, then the $\Ext$ group in (ii') is zero by \cite[prop.\ 2.3.4]{Garcia-Rozas-book} since $\Sigma^{ \ell }C$ is acyclic while $\LFunctorHomAngles{q}S_pJ$ is a semiinjective chain complex over ${}_{ A }\!\Mod$ by Lemma \ref{lem:17d}(ii).

Now suppose that $P\langle q \rangle$ is good.

(ii')$\Rightarrow$(i): Suppose that (ii') holds.  Let $\ell \in \BZ$ and $J \in {}_{ A }\!\Inj$ be given.  Since $P\langle q \rangle$ is good, Lemma \ref{lem:9A_13A} says that the chain complex $\LFunctorHomAngles{q}S_qJ$ has $J$, viewed as a chain complex concentrated in degree zero, as a direct summand, so (ii') implies
\[
  \Ext_{ \Ch( {}_{ A }\!\Mod ) }^1( \Sigma^{ \ell }C,J ) = 0,
\]
which again implies
\[
  \Ext_{ \Ch( {}_{ A }\!\Mod ) }^1( C,\Sigma^{ -\ell }J ) = 0
\]
because $\Sigma$ is an automorphism of the abelian category $\Ch( {}_{ A }\!\Mod )$.  The last equation holds for each $\ell \in \BZ$, so Lemma \ref{lem:43} applied to the case of chain complexes shows that (i) holds.
\end{proof}

\section{Proof of Theorems \ref{thm:A} and \ref{thm:B}}
\label{sec:proof_of_Thm_B}

This section proves Theorems \ref{thm:A} and \ref{thm:B} by way of Theorems \ref{thm:31_50}, \ref{thm:58_60}, and \ref{thm:81_19A}, whose proofs are closely related but sufficiently short and different that we provide all three.

\begin{Remark}
\label{rmk:functors_from_a_symmetric_bimodule}
If $A$ is a commutative ring and $M$ is in ${}_{ A }\!\Mod$ or $\Mod_A$, then we will view $M$ as a symmetric bimodule in ${}_{ A }\!\Mod_A$.  This provides us with functors 
\begin{gather*}
  \Hom_A( -,M ) : {}_{ Q,A }\!\Mod \xrightarrow{} \Mod_{ Q,A }, \\[1mm]
  \Hom_A( M,- ) : {}_{ Q,A }\!\Mod \xrightarrow{} {}_{ Q,A }\!\Mod, \\[1mm]
  M \Tensor{A} - : {}_{ Q,A }\!\Mod \xrightarrow{} {}_{ Q,A }\!\Mod.
\end{gather*}
\end{Remark}

\begin{Theorem}
\label{thm:31_50}
Assume that $A$ is a commutative noetherian ring and consider the following conditions:
\begin{enumerate}
\setlength\itemsep{4pt}

  \item  $A$ is a Gorenstein ring.
  
  \item  If $L \in {}_{ A }\!\Prj$ and $P \in {}_{ Q,A }\!\Mod$ are given where $P( p ) \in {}_{ A }\!\Prj$ for each $p \in Q_0$, then
\[  
  P \in {}_{ Q,A }\cE \Rightarrow \Hom_A( P,L ) \in \cE_{ Q,A }
\]
(this makes sense because $\Hom_A( P,L )$ is in $\Mod_{ Q,A }$ by Remark \ref{rmk:functors_from_a_symmetric_bimodule}).

\end{enumerate}
We have (i)$\Rightarrow$(ii).  

If there exists $q \in Q_0$ such that $P\langle q \rangle$ is good in the sense of Definition \ref{def:P_angles}, then we have (ii)$\Rightarrow$(i).  This applies in particular if there exists $q \in Q_0$ such that $\SerreFunctor q \neq q$; see Remark \ref{rmk:P}.
\end{Theorem}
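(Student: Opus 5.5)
The plan is to transport the classic chain complex result (\cite[cor.\ 5.5]{Iyengar-Krause-acyclicity}, \cite[thm.\ 19.5.25]{CFH-book}) through the functors $\LFunctorHomAngles{r}$, $\LFunctorHomBrackets{r}$ and their adjoints. I use the forward direction of that result (Gorenstein implies that acyclic complexes of projectives $C$ have $\Hom_A(C,L)$ acyclic for $L$ projective) for (i)$\Rightarrow$(ii), and its converse for (ii)$\Rightarrow$(i).

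\emph{(i)$\Rightarrow$(ii).} Let $P \in {}_{Q,A}\cE$ with each $P(p)$ projective, and let $r \in Q_0$.
\begin{enumerate}
  \item Since $\LFunctorHomAngles{r}$ commutes with forgetting the $A$-structure, Lemma \ref{lem:5v} shows that $\LFunctorHomAngles{r}P$ is acyclic.
  \item By Lemma \ref{lem:5i_5ii_30_32}(ii), $\LFunctorHomAngles{r}P$ is a complex over ${}_A\!\Prj$.
  \item Since $A$ is Gorenstein, the classic result gives that $\Hom_A(\LFunctorHomAngles{r}P,L)$ is acyclic.
  \item The $A$-linear version of Lemma \ref{lem:17A_29_54_57_38A_40A}(iii) (Remark \ref{rmk:17A_29_54_57_38A_40A}, with $L$ a symmetric bimodule) gives
  \[
    \Hom_A(\LFunctorHomAngles{r}P,L) \cong \Sigma^{-1}\LFunctorHomBrackets{r}\big(\Hom_A(P,L)\circ\SerreFunctor^{-1}\big).
  \]
  Hence $\LFunctorHomBrackets{r}$ of $\Hom_A(P,L)\circ\SerreFunctor^{-1}$ is acyclic for every $r$.
  \item Lemma \ref{lem:5v} applied to $Q^{\opp}$ (with $P\{r\}$ playing the role of $P\langle r\rangle$, legitimate by Setup \ref{set:P}) gives $\Hom_A(P,L)\circ\SerreFunctor^{-1} \in \cE_{Q,A}$.
  \item Precomposition with the autoequivalence $\SerreFunctor$ preserves finite projective dimension in $\Mod_Q$, so $\Hom_A(P,L)\in\cE_{Q,A}$.
\end{enumerate}

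\emph{(ii)$\Rightarrow$(i).} Fix $q$ with $P\langle q\rangle$ good; then $P\{q\}$ is good by Lemma \ref{lem:dualising_P_angles}(ii). By the classic result it suffices to show that if $C$ is an acyclic complex over ${}_A\!\Prj$ and $L\in{}_A\!\Prj$, then $\Hom_A(C,L)$ is acyclic. Fix $\ell\in\BZ$ and set $P=\JFunctorHomAngles{q}\Sigma^{\ell}C$.
\begin{enumerate}
  \item Lemma \ref{lem:59_68}(i), for the cotorsion pair $({}_A\!\Prj,{}_A\!\Mod)$, gives $P(p)\in{}_A\!\Prj$ for all $p$.
  \item Lemma \ref{lem:15A_lower}, direction (i)$\Rightarrow$(ii), gives $P\in{}_{Q,A}\cE$.
  \item Condition (ii) then yields $\Hom_A(P,L)\in\cE_{Q,A}$, hence $\Hom_A(P,L)\circ\SerreFunctor^{-1}\in\cE_{Q,A}$.
  \item Lemma \ref{lem:18A_36_55}(iii), applied to $\Sigma^{\ell-1}C$, identifies this object with
  \[
    \MFunctorHomBrackets{q}\Hom_A(\Sigma^{\ell-1}C,L)\cong\MFunctorHomBrackets{q}\Sigma^{1-\ell}\Hom_A(C,L).
  \]
  \item Letting $\ell$ range over $\BZ$, the $Q^{\opp}$-version of Lemma \ref{lem:20_49}, direction (ii)$\Rightarrow$(i), applies because $P\{q\}$ is good. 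It shows that $\Hom_A(C,L)$ is acyclic.
\end{enumerate}

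The main obstacle is bookkeeping rather than ideas. First, the symmetric versions (Remark \ref{rmk:13_14_33_21A}, Lemmas \ref{lem:5v} and \ref{lem:20_49} for $Q^{\opp}$) must be justified: $Q^{\opp}$ satisfies all the standing hypotheses and $P\{q\}$ is a complete projective resolution over $\prj_Q$. Second, the suspension shifts and the $\SerreFunctor^{\pm1}$ twists in Lemmas \ref{lem:17A_29_54_57_38A_40A}(iii) and \ref{lem:18A_36_55}(iii) must line up. The twists are harmless because $\cE$ is stable under precomposition with $\SerreFunctor$ and the conditions are quantified over all $\ell$ and all $r$.
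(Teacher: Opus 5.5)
Your proposal is correct and follows the paper's proof essentially step for step. For (i)$\Rightarrow$(ii) you use the same chain Lemma~\ref{lem:5v} $\to$ \cite[thm.\ 19.5.25]{CFH-book} $\to$ Lemma~\ref{lem:17A_29_54_57_38A_40A}(iii) $\to$ Lemma~\ref{lem:5v} for $Q^{\opp}$, and for (ii)$\Rightarrow$(i) you use Lemmas~\ref{lem:59_68}(i), \ref{lem:15A_lower}, \ref{lem:18A_36_55}(iii) and \ref{lem:20_49} for $Q^{\opp}$.

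The only differences are cosmetic. You apply condition (ii) to $\JFunctorHomAngles{q}\Sigma^{\ell}C$ and twist by $\SerreFunctor^{-1}$ afterwards, using $\Hom_A(P,L)\circ\SerreFunctor^{-1}=\Hom_A(P\circ\SerreFunctor^{-1},L)$, whereas the paper twists first. You also state explicitly that $P\{q\}$ is good by Lemma~\ref{lem:dualising_P_angles}(ii), which the paper leaves implicit.
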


\begin{proof}
(i)$\Rightarrow$(ii): Suppose that (i) holds.  Let $L \in {}_{ A }\!\Prj$ and $P \in {}_{ Q,A }\!\Mod$ be given where $P( p ) \in {}_{ A }\!\Prj$ for each $p \in Q_0$.  We can argue as follows:
\begin{align*}
  P \in {}_{ Q,A }\cE
  & \Leftrightarrow \forall q \in Q_0: \LFunctorHomAngles{q}P \mbox{ is acyclic} && \mbox{Lemma \ref{lem:5v}} \\
  & \stackrel{ \rm (a) }{ \Rightarrow } \forall q \in Q_0: \Hom_A( \LFunctorHomAngles{q}P,L ) \mbox{ is acyclic} && \mbox{\cite[thm.\ 19.5.25]{CFH-book}} \\
  & \Leftrightarrow \forall q \in Q_0: \Sigma^{ -1 }\LFunctorHomBrackets{q}\big( \Hom_A( P,L ) \circ \SerreFunctor^{ -1 } \big) \mbox{ is acyclic} && \mbox{Lemma \ref{lem:17A_29_54_57_38A_40A}(iii) and Remark \ref{rmk:17A_29_54_57_38A_40A}} \\
  & \Leftrightarrow \forall q \in Q_0: \LFunctorHomBrackets{q}\big( \Hom_A( P,L ) \circ \SerreFunctor^{ -1 } \big) \mbox{ is acyclic} && \\
  & \Leftrightarrow \Hom_A( P,L ) \circ \SerreFunctor^{ -1 } \in \cE_{ Q,A } && \mbox{Lemma \ref{lem:5v} applied to $Q^{ \opp }$} \\
  & \Leftrightarrow \Hom_A( P,L ) \in \cE_{ Q,A }. && \mbox{$\SerreFunctor$ is an automorphism}
\end{align*}
To get the implication marked (a), we appeal to \cite[thm.\ 19.5.25]{CFH-book}.  This requires $\LFunctorHomAngles{q}P$ to be a chain complex over ${}_{ A }\!\Prj$, which it is by Lemma \ref{lem:5i_5ii_30_32}(ii).

Now assume that $q \in Q_0$ is such that $P\langle q \rangle$ is good.

(ii)$\Rightarrow$(i): Suppose that (ii) holds.  Let $L \in {}_{ A }\!\Prj$ and a chain complex $C$ over ${}_{ A }\!\Prj$ be given.  By \cite[thm.\ 19.5.25]{CFH-book} it is enough to show that if $C$ is acyclic, then $\Hom_A( C,L )$ is acyclic, and we can argue as follows: 
\begin{align*}
  C \mbox{ is acyclic}
  & \Leftrightarrow \forall \ell \in \BZ: \JFunctorHomAngles{q}\Sigma^{ \ell }C \in {}_{ Q,A }\cE && \mbox{Lemma \ref{lem:15A_lower}} \\
  & \Leftrightarrow \forall \ell \in \BZ: ( \JFunctorHomAngles{q}\Sigma^{ \ell }C ) \circ \SerreFunctor^{ -1 } \in {}_{ Q,A }\cE && \mbox{$\SerreFunctor$ is an automorphism} \\
  & \stackrel{ \rm (b) }{ \Rightarrow } \forall \ell \in \BZ: \Hom_A\!\big( ( \JFunctorHomAngles{q}\Sigma^{ \ell }C ) \circ \SerreFunctor^{ -1 } ,L \big) \in \cE_{ Q,A } && \mbox{Condition (ii)} \\
  & \Leftrightarrow \forall \ell \in \BZ: \MFunctorHomBrackets{q}\Hom_A( \Sigma^{ \ell-1 }C,L ) \in \cE_{ Q,A } && \mbox{Lemma \ref{lem:18A_36_55}(iii)} \\
  & \Leftrightarrow \Hom_A( C,L ) \mbox{ is acyclic.} && \mbox{Lemma \ref{lem:20_49} applied to $Q^{ \opp }$}
\end{align*}
To get the implication marked (b), we appeal to condition (ii).  This requires that $\big( ( \JFunctorHomAngles{q}\Sigma^{ \ell }C ) \circ \SerreFunctor^{ -1 } \big)( p ) \in {}_{ A }\!\Prj$ for each $p \in Q_0$.  This is equivalent to $( \JFunctorHomAngles{q}\Sigma^{ \ell }C )( p ) \in {}_{ A }\!\Prj$ for each $p \in Q_0$, which is true by Lemma \ref{lem:59_68}(i) applied to the projective cotorsion pair $( {}_{ A }\!\Prj,{}_{ A }\!\Mod )$.
\end{proof}

\begin{Theorem}
\label{thm:58_60}
Assume that $A$ is a commutative noetherian ring and consider the following conditions:
\begin{enumerate}
\setlength\itemsep{4pt}

  \item  $A$ is a Gorenstein ring.
  
  \item  If $J \in {}_{ A }\!\Inj$ and $I \in {}_{ Q,A }\!\Mod$ are given where $I( p ) \in {}_{ A }\!\Inj$ for each $p \in Q_0$, then
\[  
  I \in {}_{ Q,A }\cE \Rightarrow \Hom_A( J,I ) \in {}_{ Q,A }\cE
\]
(this makes sense because $\Hom_A( J,I )$ is in ${}_{ Q,A }\!\Mod$ by Remark \ref{rmk:functors_from_a_symmetric_bimodule}).
  
\end{enumerate}
We have (i)$\Rightarrow$(ii).

If there exists $q \in Q_0$ such that $P\langle q \rangle$ is good in the sense of Definition \ref{def:P_angles}, then we have (ii)$\Rightarrow$(i).  This applies in particular if there exists $q \in Q_0$ such that $\SerreFunctor q \neq q$; see Remark \ref{rmk:P}.
\end{Theorem}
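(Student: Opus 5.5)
We follow the pattern of the proof of Theorem \ref{thm:31_50}, replacing the contravariant functor $\Hom_A( -,L )$ by the covariant functor $\Hom_A( J,- )$. For chain complexes of injective modules, we use the classic characterisation \cite[thm.\ 19.5.25]{CFH-book}: $A$ is Gorenstein if and only if $\Hom_A( J,C )$ is acyclic for each $J \in {}_{ A }\!\Inj$ and each acyclic chain complex $C$ over ${}_{ A }\!\Inj$.

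(i)$\Rightarrow$(ii): Let $J$ and $I$ be as in (ii), with $I \in {}_{ Q,A }\cE$. By Lemma \ref{lem:5v}, $\LFunctorHomAngles{q}I$ is acyclic for each $q \in Q_0$. By Lemma \ref{lem:5i_5ii_30_32}(ii), with $\cA = {}_{ A }\!\Inj$, each $\LFunctorHomAngles{q}I$ is a chain complex over ${}_{ A }\!\Inj$. The classic result therefore makes $\Hom_A( J,\LFunctorHomAngles{q}I )$ acyclic. Lemma \ref{lem:17A_29_54_57_38A_40A}(ii), in the $A$-linear version of Remark \ref{rmk:17A_29_54_57_38A_40A} with $J$ a symmetric bimodule, gives
\[
  \Hom_A( J,\LFunctorHomAngles{q}I ) \cong \LFunctorHomAngles{q}\Hom_A( J,I ).
\]
So $\LFunctorHomAngles{q}\Hom_A( J,I )$ is acyclic for all $q$, and Lemma \ref{lem:5v} gives $\Hom_A( J,I ) \in {}_{ Q,A }\cE$. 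This direction needs no Serre twist, so it is simpler than the projective case.

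(ii)$\Rightarrow$(i): Fix $q$ with $P\langle q \rangle$ good. Let $C$ be an acyclic chain complex over ${}_{ A }\!\Inj$ and let $J \in {}_{ A }\!\Inj$; we must show that $\Hom_A( J,C )$ is acyclic. Here we use the right adjoint $\MFunctorHomAngles{q}$ rather than $\JFunctorHomAngles{q}$, since it preserves injectivity degreewise. The chain of steps is:
\begin{align*}
  C \mbox{ acyclic}
  & \Leftrightarrow \forall \ell: \MFunctorHomAngles{q}\Sigma^{\ell}C \in {}_{ Q,A }\cE && \mbox{Lemma \ref{lem:20_49}} \\
  & \Rightarrow \forall \ell: \Hom_A( J,\MFunctorHomAngles{q}\Sigma^{\ell}C ) \in {}_{ Q,A }\cE && \mbox{Condition (ii)} \\
  & \Leftrightarrow \forall \ell: \MFunctorHomAngles{q}\Sigma^{\ell}\Hom_A( J,C ) \in {}_{ Q,A }\cE && \mbox{Lemma \ref{lem:18A_36_55}(ii)} \\
  & \Leftrightarrow \Hom_A( J,C ) \mbox{ acyclic.} && \mbox{Lemma \ref{lem:20_49}}
\end{align*}
The two uses of Lemma \ref{lem:20_49} are in opposite directions: the first uses (i)$\Rightarrow$(ii), the last uses the converse, which is where goodness of $P\langle q \rangle$ enters. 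In the third step we also use $\Hom_A( J,\Sigma^{\ell}C ) \cong \Sigma^{\ell}\Hom_A( J,C )$, which is immediate.

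To apply condition (ii) in the second step, we need $( \MFunctorHomAngles{q}\Sigma^{\ell}C )( p ) \in {}_{ A }\!\Inj$ for each $p \in Q_0$. This follows from Lemma \ref{lem:59_68}(ii) applied to the injective cotorsion pair $( {}_{ A }\!\Mod,{}_{ A }\!\Inj )$, since $\Sigma^{\ell}C$ is a chain complex over ${}_{ A }\!\Inj$.

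The main point needing care is confirming that Lemma \ref{lem:20_49} is stated for $\MFunctorHomAngles{q}$ on ${}_{ Q,A }\!\Mod$ itself. In the projective case one had to pass to $Q^{\opp}$ and to $\MFunctorHomBrackets{q}$, but here no passage to the opposite category is needed. The rest is bookkeeping with the symmetric bimodule structure from Remark \ref{rmk:functors_from_a_symmetric_bimodule}.
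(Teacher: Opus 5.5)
Your proposal is correct and follows the paper's proof essentially step for step. For (i)$\Rightarrow$(ii) you use Lemma \ref{lem:5v}, the classic characterisation, and Lemma \ref{lem:17A_29_54_57_38A_40A}(ii); for (ii)$\Rightarrow$(i) you chain Lemma \ref{lem:20_49}, condition (ii), Lemma \ref{lem:18A_36_55}(ii) and Lemma \ref{lem:20_49} again, with Lemma \ref{lem:59_68}(ii) for the injective cotorsion pair, and you make the identification $\Hom_A(J,\Sigma^{\ell}C)\cong\Sigma^{\ell}\Hom_A(J,C)$ explicit where the paper leaves it implicit.
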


\begin{proof}
(i)$\Rightarrow$(ii): Suppose that (i) holds.  Let $J \in {}_{ A }\!\Inj$ and $I \in {}_{ Q,A }\!\Mod$ be given where $I( p ) \in {}_{ A }\!\Inj$ for each $p \in Q_0$.  We can argue as follows:
\begin{align*}
  I \in {}_{ Q,A }\cE
  & \Leftrightarrow \forall q \in Q_0: \LFunctorHomAngles{q}I \mbox{ is acyclic} && \mbox{Lemma \ref{lem:5v}} \\
  & \stackrel{ \rm (a) }{ \Rightarrow } \forall q \in Q_0: \Hom_A( J,\LFunctorHomAngles{q}I ) \mbox{ is acyclic} && \mbox{\cite[thm.\ 19.5.25]{CFH-book}} \\
  & \Leftrightarrow \forall q \in Q_0: \LFunctorHomAngles{q}\Hom_A( J,I ) \mbox{ is acyclic} && \mbox{Lemma \ref{lem:17A_29_54_57_38A_40A}(ii) and Remark \ref{rmk:17A_29_54_57_38A_40A}} \\
  & \Leftrightarrow \Hom_A( J,I ) \in {}_{ Q,A }\cE. && \mbox{Lemma \ref{lem:5v}}
\end{align*}
To get the implication marked (a), we appeal to \cite[thm.\ 19.5.25]{CFH-book}.  This requires $\LFunctorHomAngles{q}I$ to be a chain complex over ${}_{ A }\!\Inj$, which it is by Lemma \ref{lem:5i_5ii_30_32}(ii).

Now assume that $q \in Q_0$ is such that $P\langle q \rangle$ is good.

(ii)$\Rightarrow$(i): Suppose that (ii) holds.  Let $J \in {}_{ A }\!\Inj$ and a chain complex $C$ over ${}_{ A }\!\Inj$ be given.  By \cite[thm.\ 19.5.25]{CFH-book} it is enough to show that if $C$ is acyclic, then $\Hom_A( J,C )$ is acyclic, and we can argue as follows: 
\begin{align*}
  C \mbox{ is acyclic}
  & \Leftrightarrow \forall \ell \in \BZ: \MFunctorHomAngles{q}\Sigma^{ \ell }C \in {}_{ Q,A }\cE && \mbox{Lemma \ref{lem:20_49}} \\
  & \stackrel{ \rm (b) }{ \Rightarrow } \forall \ell \in \BZ: \Hom_A( J,\MFunctorHomAngles{q}\Sigma^{ \ell }C ) \in {}_{ Q,A }\cE && \mbox{Condition (ii)} \\
  & \Leftrightarrow \forall \ell \in \BZ: \MFunctorHomAngles{q}\Hom_A( J,\Sigma^{ \ell }C ) \in {}_{ Q,A }\cE && \mbox{Lemma \ref{lem:18A_36_55}(ii)} \\
  & \Leftrightarrow \Hom_A( J,C ) \mbox{ is acyclic.} && \mbox{Lemma \ref{lem:20_49}}
\end{align*}
To get the implication marked (b), we appeal to condition (ii).  This requires $( \MFunctorHomAngles{q}\Sigma^{ \ell }C )( p ) \in {}_{ A }\!\Inj$ for each $p \in Q_0$, which is true by Lemma \ref{lem:59_68}(ii) applied to the injective cotorsion pair $( {}_{ A }\!\Mod,{}_{ A }\!\Inj )$.
\end{proof}

In the following theorem, ${}_{ A }\!\Flat$ denotes the category of flat left $A$-modules.

\begin{Theorem}
\label{thm:81_19A}
Assume that $A$ is a commutative noetherian ring and consider the following conditions:
\begin{enumerate}
\setlength\itemsep{4pt}

  \item  $A$ is a Gorenstein ring.
  
  \item  If $J \in \Inj_A$ and $F \in {}_{ Q,A }\!\Mod$ are given where $F( p ) \in {}_{ A }\!\Flat$ for each $p \in Q_0$, then
\[  
  F \in {}_{ Q,A }\cE \Rightarrow J \Tensor{A} F \in {}_{ Q,A }\cE
\]
(this makes sense because $J \Tensor{A} F$ is in ${}_{ Q,A }\!\Mod$ by Remark \ref{rmk:functors_from_a_symmetric_bimodule}).

\end{enumerate}
We have (i)$\Rightarrow$(ii).  

If there exists $q \in Q_0$ such that $P\langle q \rangle$ is good in the sense of Definition \ref{def:P_angles}, then we have (ii)$\Rightarrow$(i).  This applies in particular if there exists $q \in Q_0$ such that $\SerreFunctor q \neq q$; see Remark \ref{rmk:P}.
\end{Theorem}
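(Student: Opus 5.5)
We follow the pattern of Theorems \ref{thm:31_50} and \ref{thm:58_60}, using as input the flat version of \cite[thm.\ 19.5.25]{CFH-book}: for a noetherian commutative ring $A$, $A$ is Gorenstein if and only if $J \Tensor{A} C$ is acyclic for every acyclic chain complex $C$ over ${}_{ A }\!\Flat$ and every $J \in \Inj_A$.

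For (i)$\Rightarrow$(ii), let $J \in \Inj_A$ and let $F \in {}_{ Q,A }\!\Mod$ satisfy $F( p ) \in {}_{ A }\!\Flat$ for each $p$. We argue by a chain of implications:
\begin{align*}
  F \in {}_{ Q,A }\cE
  & \Leftrightarrow \forall q \in Q_0: \LFunctorHomAngles{q}F \mbox{ is acyclic} && \mbox{Lemma \ref{lem:5v}} \\
  & \Rightarrow \forall q \in Q_0: J \Tensor{A} \LFunctorHomAngles{q}F \mbox{ is acyclic} && \mbox{\cite[thm.\ 19.5.25]{CFH-book}} \\
  & \Leftrightarrow \forall q \in Q_0: \LFunctorHomAngles{q}( J \Tensor{A} F ) \mbox{ is acyclic} && \mbox{Lemma \ref{lem:17A_29_54_57_38A_40A}(i), Remark \ref{rmk:17A_29_54_57_38A_40A}} \\
  & \Leftrightarrow J \Tensor{A} F \in {}_{ Q,A }\cE. && \mbox{Lemma \ref{lem:5v}}
\end{align*}
The middle implication needs $\LFunctorHomAngles{q}F$ to be a complex over ${}_{ A }\!\Flat$. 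This follows from Lemma \ref{lem:5i_5ii_30_32}(ii), since ${}_{ A }\!\Flat$ is an additive subcategory.

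For (ii)$\Rightarrow$(i), fix $q$ with $P\langle q \rangle$ good. Let $C$ be an acyclic complex over ${}_{ A }\!\Flat$ and let $J \in \Inj_A$. We argue as follows:
\begin{align*}
  C \mbox{ is acyclic}
  & \Rightarrow \forall \ell \in \BZ: \JFunctorHomAngles{q}\Sigma^{ \ell }C \in {}_{ Q,A }\cE && \mbox{Lemma \ref{lem:15A_lower}} \\
  & \Rightarrow \forall \ell \in \BZ: J \Tensor{A} \JFunctorHomAngles{q}\Sigma^{ \ell }C \in {}_{ Q,A }\cE && \mbox{Condition (ii)} \\
  & \Leftrightarrow \forall \ell \in \BZ: \JFunctorHomAngles{q}( J \Tensor{A} \Sigma^{ \ell }C ) \in {}_{ Q,A }\cE && \mbox{Lemma \ref{lem:18A_36_55}(i)} \\
  & \Rightarrow J \Tensor{A} C \mbox{ is acyclic.} && \mbox{Lemma \ref{lem:15A_lower}, goodness}
\end{align*}
In the last step we use $J \Tensor{A} \Sigma^{ \ell }C \cong \Sigma^{ \ell }( J \Tensor{A} C )$. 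The flat version of \cite[thm.\ 19.5.25]{CFH-book} then gives that $A$ is Gorenstein.

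The main obstacle is the second implication. To apply condition (ii) we need $( \JFunctorHomAngles{q}\Sigma^{ \ell }C )( p ) \in {}_{ A }\!\Flat$ for every $p$. I would obtain this from Lemma \ref{lem:59_68}(i) applied to the flat cotorsion pair $( {}_{ A }\!\Flat,\Cot )$, where $\Cot$ denotes the cotorsion modules.

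That lemma works through \cite[prop.\ 3.2]{Gillespie-degreewise}: a complex over $\cX$ has vanishing $\Ext^1$ into contractible complexes over $\cY$. This holds for any cotorsion pair, so in particular for the flat one. Hence each component of $\JFunctorHomAngles{q}\Sigma^{ \ell }C$ is flat, and the argument goes through.
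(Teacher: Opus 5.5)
Your proposal is correct and follows essentially the same route as the paper's proof. Both directions use the same chain of lemmas. The flatness of the components of $\JFunctorHomAngles{q}\Sigma^{\ell}C$ is likewise obtained from Lemma~\ref{lem:59_68}(i) applied to the flat cotorsion pair.
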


\begin{proof}
(i)$\Rightarrow$(ii): Suppose that (i) holds.  Let $J \in \Inj_A$ and $F \in {}_{ Q,A }\!\Mod$ be given where $F( p ) \in {}_{ A }\!\Flat$ for each $p \in Q_0$.  We can argue as follows:
\begin{align*}
  F \in {}_{ Q,A }\cE
  & \Leftrightarrow \forall q \in Q_0: \LFunctorHomAngles{q}F \mbox{ is acyclic} && \mbox{Lemma \ref{lem:5v}} \\
  & \stackrel{ \rm (a) }{ \Rightarrow } \forall q \in Q_0: J \Tensor{A} \LFunctorHomAngles{q}F \mbox{ is acyclic} && \mbox{\cite[thm.\ 19.5.25]{CFH-book}} \\
  & \Leftrightarrow \forall q \in Q_0: \LFunctorHomAngles{q}( J \Tensor{A} F ) \mbox{ is acyclic} && \mbox{Lemma \ref{lem:17A_29_54_57_38A_40A}(i) and Remark \ref{rmk:17A_29_54_57_38A_40A}} \\
  & \Leftrightarrow J \Tensor{A} F \in {}_{ Q,A }\cE. && \mbox{Lemma \ref{lem:5v}}
\end{align*}
To get the implication marked (a), we appeal to \cite[thm.\ 19.5.25]{CFH-book}.  This requires $\LFunctorHomAngles{q}F$ to be a chain complex over ${}_{ A }\!\Flat$, which it is by Lemma \ref{lem:5i_5ii_30_32}(ii).

Now assume that $q \in Q_0$ is such that $P\langle q \rangle$ is good.

(ii)$\Rightarrow$(i): Suppose that (ii) holds.  Let $J \in \Inj_A$ and a chain complex $C$ over ${}_{ A }\!\Flat$ be given.  By \cite[thm.\ 19.5.25]{CFH-book} it is enough to show that if $C$ is acyclic, then $J \Tensor{A} C$ is acyclic, and we can argue as follows: 
\begin{align*}
  C \mbox{ is acyclic}
  & \Leftrightarrow \forall \ell \in \BZ: \JFunctorHomAngles{q}( \Sigma^{ \ell }C ) \in {}_{ Q,A }\cE && \mbox{Lemma \ref{lem:15A_lower}} \\
  & \stackrel{ \rm (b) }{ \Rightarrow } \forall \ell \in \BZ: J \Tensor{A} \JFunctorHomAngles{q}( \Sigma^{ \ell }C ) \in {}_{ Q,A }\cE && \mbox{Condition (ii)} \\
  & \Leftrightarrow \forall \ell \in \BZ: \JFunctorHomAngles{q}( J \Tensor{A} \Sigma^{ \ell }C ) \in {}_{ Q,A }\cE && \mbox{Lemma \ref{lem:18A_36_55}(i)} \\
  & \Leftrightarrow J \Tensor{A} C \mbox{ is acyclic.} && \mbox{Lemma \ref{lem:15A_lower}}
\end{align*}
To get the implication marked (b), we appeal to condition (ii).  This requires $\big( \JFunctorHomAngles{q}( \Sigma^{ \ell }C ) \big)( p ) \in {}_{ A }\!\Flat$ for each $p \in Q_0$, which is true by Lemma \ref{lem:59_68}(i) applied to the so-called flat cotorsion pair $( {}_{ A }\!\Flat,{}_{ A }\!\Cot )$; see \cite[lem.\ 3.4.1]{Xu-Flat-covers}.
\end{proof}

\begin{proof}[Proof of Theorem \ref{thm:B}]
Combine Theorems \ref{thm:31_50}, \ref{thm:58_60}, and \ref{thm:81_19A}, noting that, for instance, a $Q$-shaped diagram of projective $A$-modules is precisely an object $P \in {}_{ Q,A }\!\Mod$ where $P( p ) \in {}_{ A }\!\Prj$ for each $p \in Q_0$.
\end{proof}

\begin{proof}[Proof of Theorem \ref{thm:A}]
Let $\Bk$ be $\BZ$ and let $Q$ be $\QNcpx$, the $\BZ$-linear category given by the quiver in Figure \ref{fig:linear_quiver} with the relations that $N$ consecutive arrows compose to zero.  Then $Q$-shaped diagrams are $N$-complexes.  Theorem \ref{thm:B} implies Theorem \ref{thm:A} because $\SerreFunctor q \neq q$ is satisfied for each $q \in Q_0$, and because ${}_{ Q,A }\cE$ consists precisely of the $N$-acyclic $N$-complexes; see \cite[2.5 and 3.5]{HJ-Abel}.
\end{proof}

\medskip
\noindent
{\bf Acknowledgement.}
The first author was supported by the Danish National Research Foundation through the Copenhagen Centre for Geometry and Topology (DNRF151).  The second author was supported by a Research Project 2 from the Independent Research Fund Denmark (grant 1026-00050B) and by Aarhus University Research Foundation (grant AUFF-E-2024-9-43).

\end{document}